\documentclass[11pt]{amsart}
\usepackage[top=2.5cm,bottom=2.5cm,left=2.5cm,right=2.5cm]{geometry}
\usepackage[english]{babel}
\usepackage{times}
\usepackage{amssymb,amsthm}

\title[Symplectic and almost contact homogeneous structures]
{Infinitesimally homogeneous manifolds with symplectic and almost contact structure groups}
\author{Carlos Alberto Mar\'in Arango}
\address{
Instituto de Matemáticas y Estadística,
Facultad de Ciencias Exactas y Naturales,
Universidad de Antioquia,
Medellín, Colombia
}
\email{calberto.marin@udea.edu.co}

\author{Alejandro Pinilla Barrera}
\address{
Instituto de Matemáticas y Estadística,
Facultad de Ciencias Exactas y Naturales,
Universidad de Antioquia,
Medellín, Colombia
}
\email{alejandro.pinilla@udea.edu.co}
\subjclass[2020]{Primary 53C10; Secondary 53B05, 53C15, 53C30, 53D05, 53D15}
\keywords{Infinitesimally homogeneous manifolds, inner torsion, $G$-structures,
almost contact metric structures}

\begin{document}
\numberwithin{equation}{section}
\theoremstyle{plain}\newtheorem{teo}{Theorem}[section]
\theoremstyle{plain}\newtheorem{prop}[teo]{Proposition}
\theoremstyle{plain}\newtheorem{lema}[teo]{Lemma}
\theoremstyle{definition}\newtheorem{defn}[teo]{Definition}
\theoremstyle{remark}\newtheorem{remark}[teo]{Remark}
\theoremstyle{plain}\newtheorem{cor}[teo]{Corollary}
\theoremstyle{definition}\newtheorem{example}[teo]{Example}

\begin{abstract}
We study infinitesimally homogeneous affine manifolds with prescribed
structure groups. First, for the real symplectic group
$\operatorname{Sp}(2n,\mathbb R)$, we prove that the corresponding
infinitesimally homogeneous structures are precisely flat torsion-free
symplectic affine structures. Second, for
\(\operatorname{U}(n)\times\{1\}\), whose \(G\)-structures are
equivalent to almost contact metric data, we classify the torsion-free
characteristic tensors and obtain the general torsional case by
invariant deformation. The admissible torsion and inner-torsion parameters are governed by a
single algebraic relation, and the horizontal distribution is contact precisely when
a certain combination of these parameters does not vanish. We also
describe the compatible connections and relate the torsion-free
branches to trans-Sasakian geometry.
\end{abstract}

\maketitle

\section{Introduction}

Infinitesimally homogeneous manifolds with $G$-structure provide a
natural framework for studying affine manifolds whose local geometry is
encoded by constant algebraic data. More precisely, if
\(G\subset \mathrm{GL}(\mathbb R^n)\) is a Lie subgroup, an affine
manifold with \(G\)-structure is a triple \((M,\nabla,P)\), where \(M\)
is an \(n\)-dimensional smooth manifold, \(\nabla\) is an affine
connection on \(M\), and \(P\subset \mathrm{FR}(TM)\) is a
\(G\)-structure. Such a triple is called infinitesimally homogeneous
when the components of the curvature tensor, the torsion tensor and
the inner torsion of the \(G\)-structure are constant when expressed
with respect to any frame belonging to \(P\). These constant
representatives are called the characteristic tensors of the structure
and will be denoted by \(R_0,\, T_0,\,\mathfrak I_0.\)

The notion of infinitesimal homogeneity was originally motivated by
immersion problems. In their work on $G$-structure preserving affine
immersions, Piccione and Tausk~\cite{piccione-tausk} showed that when
the target manifold is infinitesimally homogeneous, the Gauss, Codazzi,
Ricci, and torsion equations can be written intrinsically in terms of
the characteristic data of the target. Thus infinitesimally homogeneous
manifolds form a natural class of ambient spaces for general affine
immersion problems with prescribed $G$-structure.

The relevance of the characteristic tensors is that they reduce a local
geometric classification problem to an algebraic one. For a fixed
structural group $G$, the tensors \((T_0,R_0,\mathfrak I_0)\) are not
arbitrary. They must be invariant under the natural action of $G$ and
satisfy algebraic compatibility relations together with Bianchi-type
identities. In~\cite{marin}, necessary and sufficient algebraic
conditions for such a triple to arise as the characteristic data of an
infinitesimally homogeneous manifold were established. An important
feature of that classification framework is that the problem with
arbitrary torsion can be reduced to the torsion-free case.

More precisely, if \((M,\nabla, P)\) is an infinitesimally homogeneous
manifold with characteristic torsion tensor \(T_0\), then the
symmetrized connection \(\nabla^s=\nabla-\frac12T\) is torsion-free and
\((M,\nabla^s, P)\) is again infinitesimally homogeneous. Conversely,
starting from a torsion-free infinitesimally homogeneous manifold and a
\(G\)-invariant skew-symmetric tensor
\(t_0\in\operatorname{Hom}_G(\Lambda^2 \mathbb{R}^n,\mathbb{R}^n)\), one
recovers an infinitesimally homogeneous manifold with torsion by setting
\(\nabla=\nabla^s+\frac12t\), where \(t\) denotes the tensor field induced by \(t_0\) by using the frames in \(P\). Thus, for a fixed structure group, the
classification problem naturally splits into two parts: the
classification of the torsion-free characteristic data and the
determination of the invariant tensors that may occur as torsion.

Following this program, several classical structure groups have already
been studied. In~\cite{Marin-Blazquez}, explicit classifications were
obtained for groups including the identity group, finite groups, the
diagonal group, the special linear group, the orthogonal group, and the
unitary group. The purpose of the present paper is to continue this
classification program for the two groups
\(\operatorname{Sp}(2n,\mathbb R)\) and
\(\operatorname{U}(n)\times\{1\}\subset\operatorname{GL}(2n+1,\mathbb R)\).

For the real symplectic group, the classification is rigid: all the
characteristic tensors necessarily vanish. Consequently,
infinitesimally homogeneous structures with structure group
\(\operatorname{Sp}(2n,\mathbb R)\) are precisely flat torsion-free
symplectic affine structures. The case \(G=\operatorname{U}(n)\times\{1\}\) is considerably richer. A
\(G\)-structure of this type is naturally equivalent to almost contact
metric data on a \((2n+1)\)-dimensional manifold. We determine the
invariant candidates for the characteristic tensors, solve the
torsion-free classification problem, and then obtain the general case
with nonzero torsion. In terms of the torsion parameters
\(\tau_1,\tau_2,\tau_3\) and the inner torsion parameters
\(a,b,\alpha,\beta,\gamma,\delta\), the admissible data are
characterized by the relation
\((2\delta-\tau_3)(\alpha-a-\tau_1)=0\) (Corollary~\ref{cor:parameter-classification}),
and the horizontal distribution is a contact distribution if and only
if \(\tau_3\neq2\delta\) (Proposition~\ref{prop:deta-formula}). For Levi-Civita connections, the two torsion-free branches
correspond to the \(\alpha_T\)-Sasakian and \(\beta_T\)-Kenmotsu sides of
trans-Sasakian geometry, and the compatible stratum to the coK\"ahler
case (Proposition~\ref{prop:levi-civita}).

The paper is organized as follows. Section~\ref{sec:preliminaries}
recalls the basic theory of infinitesimally homogeneous manifolds, the
algebraic conditions satisfied by their characteristic tensors, the
torsion deformation procedure, and some representation-theoretic facts.
Section~\ref{sec:symplectic} contains the classification for
\(\operatorname{Sp}(2n,\mathbb R)\). Section~\ref{sec:unitary-one} is
devoted to \(\operatorname{U}(n)\times\{1\}\): we determine the invariant
candidates for the characteristic tensors, classify the torsion-free
stratum, obtain the general torsional case by invariant deformation,
and discuss compatible connections, contact structures, trans-Sasakian
geometries, and the use of these models as ambient spaces for affine
immersions.

%%%%%%%%%%%%%%%%%%%%%%%%%%%%%%%%%
%%%%%%%%%%%%%%%%%%%%%%%%%%%%%%%%%

\section{Notation and preliminaries}
\label{sec:preliminaries}

\subsection{Infinitesimally homogeneous manifolds}
\label{subsec:inf-hom}

Let \(G\subseteq \operatorname{GL}(\mathbb{R}^n)\) be a Lie subgroup
with Lie algebra \(\mathfrak g\). A \(G\)-structure on a smooth manifold
\(M\) with \(\dim M=n\) is a principal \(G\)-subbundle
\(P\subseteq \operatorname{FR}(TM)\) of the frame bundle of \(M\). Thus,
for each \(x\in M\), an element \(p\in P_x\) is a linear isomorphism
\(p:\mathbb{R}^n\to T_xM\), and two frames in \(P_x\) differ by right
composition with an element of \(G\). Let \(\nabla\) be an affine
connection on \(M\). We shall call the triple \((M,\nabla,P)\) an affine
manifold with \(G\)-structure. The connection \(\nabla\) determines its
torsion tensor \(T\) and curvature tensor \(R\), which we normalize by
\[
T(X,Y)=\nabla_XY-\nabla_YX-[X,Y],
\quad
R(X,Y)=\nabla_X\nabla_Y-\nabla_Y\nabla_X-\nabla_{[X,Y]}.
\]
In addition, the \(G\)-structure \(P\) has an associated tensor, called
the inner torsion, which measures the failure of \(\nabla\) to be
compatible with \(P\). We briefly recall its construction.

Let \(\omega^\nabla\) denote the connection form of \(\nabla\) on
\(\operatorname{FR}(TM)\), and let
\(q:\mathfrak{gl}(\mathbb{R}^n)\to\mathfrak{gl}(\mathbb{R}^n)/\mathfrak g\)
be the quotient map. If \(s:U\to P\) is a local section, we set
\[
\Lambda^s_x(u)=(s^*\omega^\nabla)_x\bigl(s(x)u\bigr)
\in\mathfrak{gl}(\mathbb R^n),
\quad x\in U,\ u\in\mathbb R^n.
\]
With this convention,
\begin{equation}
\label{eq:lift-convention}
\nabla_{s(x)u}(s\,w)=s(x)\bigl(\Lambda^s_x(u)w\bigr),
\quad u,w\in\mathbb R^n,
\end{equation}
where \(s\,w\) denotes the local vector field \(x\mapsto s(x)w\).
Passing to the quotient gives a linear map
\[
\mathfrak I^P_x:T_xM\to\mathfrak{gl}(T_xM)/\mathfrak g_x,
\quad
\mathfrak I^P_x\bigl(s(x)u\bigr)
=\overline{\operatorname{Ad}}_{s(x)}\bigl(q(\Lambda^s_x(u))\bigr),
\]
where \(\mathfrak g_x=\operatorname{Ad}_p(\mathfrak g)\) for any
\(p\in P_x\), and
\(\overline{\operatorname{Ad}}_p:\mathfrak{gl}(\mathbb{R}^{n})/\mathfrak g
\to\mathfrak{gl}(T_xM)/\mathfrak g_x\) denotes the map induced by
conjugation with \(p\). This map is independent of the choice of local
section and is called the inner torsion of \(P\) with respect to
\(\nabla\). In particular, \(\mathfrak I^P=0\) if and only if \(\nabla\)
is compatible with the \(G\)-structure.

\begin{defn}
\label{def:inf-hom}
The triple \((M,\nabla,P)\) is said to be \emph{infinitesimally
homogeneous} if the tensors \(T\), \(R\), and \(\mathfrak I^P\) have
constant representations in the frames of \(P\). Equivalently, there
exist linear maps
\[
T_0:\Lambda^2\mathbb{R}^{n}\to \mathbb{R}^{n},
\quad
R_0:\Lambda^2\mathbb{R}^{n}\to\mathfrak{gl}(\mathbb{R}^{n}),
\quad
\mathfrak I_0:\mathbb{R}^{n}\to \mathfrak{gl}(\mathbb{R}^{n})/\mathfrak g
\]
such that, for every \(x\in M\), \(p\in P_x\), and
\(u,v,w\in \mathbb{R}^{n}\),
\[
T_x(pu,pv)=p\,T_0(u,v),
\quad
R_x(pu,pv)pw=p\,R_0(u,v)w,
\quad
\mathfrak I^P_x(pu)
=
\overline{\operatorname{Ad}}_p\bigl(\mathfrak I_0(u)\bigr).
\]
The tensors \(T_0\), \(R_0\), and \(\mathfrak I_0\) are called the
\emph{characteristic tensors} of \((M,\nabla,P)\).
\end{defn}

The characteristic tensors are necessarily invariant under the natural
action of \(G\). More precisely, for every \(g\in G\) and
\(u, v \in \mathbb{R}^{n}\),
\[
T_0(gu,gv)=gT_0(u,v),
\quad
R_0(gu,gv)=\operatorname{Ad}_g\bigl(R_0(u,v)\bigr),
\quad
\mathfrak I_0(gu)=\overline{\operatorname{Ad}}_g\bigl(\mathfrak I_0(u)\bigr).
\]
For the inner torsion, this invariance condition is expressed in the
quotient: if \(\lambda:\mathbb R^n\to \mathfrak{gl}(\mathbb{R}^n)\) is a
lift of \(\mathfrak I_0\), the \(G\)-equivariance of \(\mathfrak I_0\) is
equivalent to
\[
\operatorname{Ad}_g\bigl(\lambda(u)\bigr)-\lambda(gu)\in\mathfrak g,
\quad g\in G,\ u\in \mathbb{R}^n.
\]
Moreover, infinitesimal homogeneity means that
\(\Lambda^s_x(u)-\lambda(u)\in\mathfrak g\) for every local section
\(s\) of \(P\), every \(x\) in its domain, and every \(u\in\mathbb R^n\).

If \(G\) is connected, differentiation at the identity gives, for
every \(L\in\mathfrak g\),
\begin{equation}
\label{eq:inf-torsion-invariance}
L\,T_0(u,v)-T_0(Lu,v)-T_0(u,Lv)=0,
\end{equation}
\begin{equation}
\label{eq:inf-curvature-invariance}
[L,R_0(u,v)]-R_0(Lu,v)-R_0(u,Lv)=0,
\end{equation}
and
\begin{equation}
\label{eq:inf-inner-invariance}
[L,\lambda(u)]-\lambda(Lu)\in\mathfrak g.
\end{equation}

These infinitesimal relations will be used repeatedly to determine the
possible characteristic tensors for the structure groups considered
below.

\subsection{Algebraic conditions in the torsion-free case}
\label{subsec:algebraic-conditions}

The characteristic tensors of an infinitesimally homogeneous triple are
not arbitrary invariant tensors. They must also satisfy some algebraic
conditions introduced in \cite[Theorem 1]{marin}. These algebraic relations are
the constant-component counterparts of the structural equations of an
affine connection with \(G\)-structure.

For the classification carried out below, the torsion-free case will be
particularly important. Assume \(T_0=0\), and let
\(\lambda:\mathbb R^n\to \mathfrak{gl}(\mathbb{R}^n)\) be any lift of
\(\mathfrak I_0\). The algebraic compatibility relation between the
curvature and the inner torsion is
\begin{equation}
\label{eq:torsion-free-compatibility}
R_0(u,v)-[\lambda(u),\lambda(v)]
+\lambda\bigl(\lambda(u)v-\lambda(v)u\bigr)
\in\mathfrak g,
\quad u,v\in \mathbb{R}^{n}.
\end{equation}
This condition is independent of the chosen lift \(\lambda\). The
curvature also satisfies the first Bianchi identity
\begin{equation}
\label{eq:first-bianchi}
\mathfrak S_{u,v,w}R_0(u,v)w=0,
\quad u,v,w \in \mathbb{R}^{n},
\end{equation}
where \(\mathfrak S_{u,v,w}\) denotes cyclic summation in \(u,v,w\). In
terms of the lift \(\lambda\), the second Bianchi identity becomes
\begin{equation}
\label{eq:second-bianchi}
\mathfrak S_{u,v,w}
\left(
[\lambda(u),R_0(v,w)]-R_0(\lambda(u)v,w)-R_0(v,\lambda(u)w)
\right)=0.
\end{equation}
Conversely, these conditions are sufficient, see \cite[Theorem 2]{marin}.

\begin{teo}
\label{thm:algebraic-characterization}
Let \(G\subseteq\operatorname{GL}(\mathbb R^n)\) be a Lie subgroup, and
let \(R_0:\Lambda^2\mathbb R^n\to\mathfrak{gl}(\mathbb R^n)\) and
\(\mathfrak I_0:\mathbb R^n\to\mathfrak{gl}(\mathbb R^n)/\mathfrak g\) be
\(G\)-equivariant linear maps. Then \((R_0,\mathfrak I_0)\) are the
curvature and inner torsion characteristic tensors of a torsion-free
infinitesimally homogeneous affine manifold with \(G\)-structure if and
only if, for some (equivalently, for any) lift \(\lambda\) of
\(\mathfrak I_0\), the relations
\eqref{eq:torsion-free-compatibility}, \eqref{eq:first-bianchi}, and
\eqref{eq:second-bianchi} hold.
\end{teo}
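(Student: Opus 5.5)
Necessity is a local computation in a section of \(P\); sufficiency is a construction of a homogeneous model in the style of Cartan, so that this direction is essentially \cite[Theorem 2]{marin}.

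\emph{Necessity.} Fix a local section \(s\) of \(P\) and write \(\Lambda=\Lambda^s\), viewed as a \(\mathfrak{gl}(\mathbb R^n)\)-valued function of \((x,u)\). Use \eqref{eq:lift-convention} to write torsion and curvature in the frame \(s\).
\begin{itemize}
\item Torsion-freeness becomes \(\Lambda(u)v-\Lambda(v)u=\theta([su,sv])\), where \(\theta\) is the solder form, so brackets of frame fields are expressed through \(\Lambda\).
\item Pulling back \(d\omega+\tfrac12[\omega,\omega]=\Omega\) gives
\[
R_0(u,v)=(su)\Lambda(v)-(sv)\Lambda(u)+[\Lambda(u),\Lambda(v)]-\Lambda\bigl(\Lambda(u)v-\Lambda(v)u\bigr).
\]
\item By infinitesimal homogeneity, \(\Lambda=\lambda+\mu\) with \(\mu\) taking values in \(\mathfrak g\).
\item Project the curvature identity to \(\mathfrak{gl}/\mathfrak g\). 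The derivative terms \((su)\mu(v)-(sv)\mu(u)\) lie in \(\mathfrak g\). So do the mixed bracket terms: \([\lambda(u),\mu(v)]\) lies in \(\mathfrak g\) modulo \(\lambda(\mu(v)u)\) by \eqref{eq:inf-inner-invariance}, and these correction terms cancel against the \(\mu\)-parts of \(\Lambda(\Lambda(u)v-\Lambda(v)u)\).
\end{itemize}
This bookkeeping yields \eqref{eq:torsion-free-compatibility}; it uses connectedness of \(G\), or equivalently works with the Lie-algebra form of equivariance. The first Bianchi identity \eqref{eq:first-bianchi} is the usual one for torsion-free connections.

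For \eqref{eq:second-bianchi}, note that \(R\) has constant components. So \(\nabla R\) in the frame \(s\) is given by the derivation action of \(\Lambda(u)\) on \(R_0\). The \(\mu\)-part acts trivially by \eqref{eq:inf-curvature-invariance}, so \(\nabla_{su}R\) is represented by the bracket expression with \(\lambda(u)\). The differential Bianchi identity \(\mathfrak S\,\nabla R=0\) then gives \eqref{eq:second-bianchi}. Independence of the lift follows from the same invariance identities.

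\emph{Sufficiency.} I would construct a Lie algebra \(\mathfrak h=\mathfrak g\oplus\mathbb R^n\) with the following brackets:
\begin{itemize}
\item \([A,B]\) is the bracket of \(\mathfrak g\);
\item \([A,u]=Au-\lambda(Au)+[A,\lambda(u)]\), or in the translation part just \(Au\) after adjusting by \(\lambda\);
\item \([u,v]=-(\lambda(u)v-\lambda(v)u)\oplus\bigl(\mathfrak g\text{-component } -R_0(u,v)+[\lambda(u),\lambda(v)]-\lambda(\lambda(u)v-\lambda(v)u)\bigr)\).
\end{itemize}
The \(\mathfrak g\)-valuedness in the last bracket is exactly \eqref{eq:torsion-free-compatibility}.

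The Jacobi identity must then be checked by type.
\begin{itemize}
\item For \((A,B,C)\) it is immediate.
\item For \((A,B,u)\) it follows from \(G\)-equivariance.
\item For \((A,u,v)\) it follows from equivariance of \(R_0\) and \(\lambda\).
\item For \((u,v,w)\), the \(\mathbb R^n\)-part should reduce to the first Bianchi identity \eqref{eq:first-bianchi}, and the \(\mathfrak g\)-part to the second Bianchi identity \eqref{eq:second-bianchi}.
\end{itemize}

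From here, take a connected Lie group \(H\) with Lie algebra \(\mathfrak h\), or a local group, which suffices for local existence. Let \(K\) be the connected subgroup with Lie algebra \(\mathfrak g\); if needed, replace \(K\) by its image in \(\operatorname{GL}(\mathbb R^n)\). Set \(M=H/K\) locally. The Maurer–Cartan form splits as \(\theta\oplus\omega_{\mathfrak g}\), and the \(\mathfrak g\)-component gives a \(G\)-structure \(P\) whose frames come from \(\theta\). Define \(\nabla\) by the \(\mathfrak{gl}\)-valued form \(\omega_{\mathfrak g}+\lambda\circ\theta\). Then the Maurer–Cartan equation shows that \(\nabla\) is torsion-free with curvature \(R_0\) and inner torsion \(\mathfrak I_0\).

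\emph{Main obstacle.} I expect the main difficulty to be the \((u,v,w)\) Jacobi verification, together with matching sign conventions so that it reduces exactly to \eqref{eq:first-bianchi} and \eqref{eq:second-bianchi}. My first attempt would be to reorganise the curvature as \(\tilde R(u,v)=R_0(u,v)-[\lambda(u),\lambda(v)]+\lambda(\lambda(u)v-\lambda(v)u)\), which takes values in \(\mathfrak g\), and expand cyclically. A second issue is that \(G\) may not be connected. For that I would work with \(K\) as above and use the global equivariance to extend the structure from the \(K\)-bundle to a \(G\)-bundle.
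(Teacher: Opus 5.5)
The paper does not prove this theorem itself: necessity is quoted from \cite[Theorem~1]{marin} and sufficiency from \cite[Theorem~2]{marin}, so you are reconstructing the cited result.

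Your necessity argument is correct as written. The splitting \(\Lambda=\lambda+\mu\) and the cancellation via \eqref{eq:inf-inner-invariance} work exactly as you describe. Connectedness of \(G\) is not needed there, since \eqref{eq:inf-curvature-invariance} and \eqref{eq:inf-inner-invariance} hold for any Lie subgroup.

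The sufficiency strategy is the right one, but the bracket you wrote down is not a Lie bracket. The \(\mathbb R^n\)-component of \([u,v]\) has the wrong sign relative to your \([A,u]\) and to your \(\mathfrak g\)-component of \([u,v]\). Put \(\epsilon(A,u)=[A,\lambda(u)]-\lambda(Au)\in\mathfrak g\) and \(r(u,v)=\lambda(u)v-\lambda(v)u\).
\begin{itemize}
\item \emph{Jacobi fails.} With your signs, the \(\mathbb R^n\)-component of \([A,[u,v]]+[u,[v,A]]+[v,[A,u]]\) equals \(2\bigl(\epsilon(A,v)u-\epsilon(A,u)v\bigr)\). This need not vanish unless \(\lambda\) can be chosen exactly \(\mathfrak g\)-equivariant.
\item \emph{The connection is wrong anyway.} Use the left-invariant convention \(d\Theta(X,Y)=-[\Theta X,\Theta Y]\), in which your \([A,u]\) makes \(\omega_{\mathfrak g}+\lambda\circ\theta\) equivariant. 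The torsion of \(\omega_{\mathfrak g}+\lambda\circ\theta\) on \((u,v)\) is \(r(u,v)-[u,v]_{\mathbb R^n}\). Its curvature is \([\lambda(u),\lambda(v)]-[u,v]_{\mathfrak g}-\lambda([u,v]_{\mathbb R^n})\).
\item \emph{What the conditions force.} Zero torsion and curvature \(R_0\) force \([u,v]_{\mathbb R^n}=+r(u,v)\), together with exactly your \(\mathfrak g\)-component. Your sign would instead produce torsion \(2r(u,v)\).
\end{itemize}

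The corrected bracket is
\[
[A,u]=\epsilon(A,u)+Au,\qquad [u,v]=\bigl(-R_0(u,v)+[\lambda(u),\lambda(v)]-\lambda(r(u,v))\bigr)+r(u,v).
\]
With it, the Jacobi check you flagged as the main obstacle goes through and uses exactly the hypotheses.
\begin{itemize}
\item \eqref{eq:inf-inner-invariance} and \eqref{eq:torsion-free-compatibility} only serve to make the brackets \(\mathfrak h\)-valued.
\item For \((A,B,u)\), Jacobi is just Jacobi in \(\mathfrak{gl}(\mathbb R^n)\).
\item For \((A,u,v)\), the \(\mathbb R^n\)-part vanishes identically. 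The \(\mathfrak g\)-part reduces to \(-[A,R_0(u,v)]+R_0(Au,v)+R_0(u,Av)=0\), which is \eqref{eq:inf-curvature-invariance}.
\item For \((u,v,w)\), the \(\mathbb R^n\)-part is \(\mathfrak S_{u,v,w}R_0(v,w)u\), which vanishes by \eqref{eq:first-bianchi}. After using \eqref{eq:first-bianchi} to drop \(\lambda\bigl(\mathfrak S_{u,v,w}R_0(v,w)u\bigr)\), the \(\mathfrak g\)-part is minus the left-hand side of \eqref{eq:second-bianchi}.
\end{itemize}

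For the realization, the fact to record is that \(\Phi(A+u)=A+\lambda(u)\) satisfies \(\Phi([B,x])=[B,\Phi(x)]\) for \(B\in\mathfrak g\). This is what the \(\epsilon(B,u)\) term in \([B,u]\) is for. Hence \(\Phi\circ\Theta\) is an equivariant connection form on the local \(K\)-bundle. Its torsion is \(0\), its curvature is \(R_0\), and its inner torsion is \(\mathfrak I_0\). Your extension from \(K\) to \(G\) by global equivariance is fine. One minor correction: the frames come from the \(\mathbb R^n\)-component \(\theta\), not from the \(\mathfrak g\)-component.
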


Thus, in the torsion-free case, the algebraic classification problem is
reduced to determining all \(G\)-invariant pairs
\((R_0,\mathfrak I_0)\) satisfying
\eqref{eq:torsion-free-compatibility}, \eqref{eq:first-bianchi}, and
\eqref{eq:second-bianchi}.

\subsection{The torsion-free stratum and torsion deformations}
\label{subsec:deformations}
We now record a general deformation principle that will let us reduce
the classification with arbitrary torsion to the torsion-free case.

Let \((M,\nabla,P)\) be an infinitesimally homogeneous affine manifold
with \(G\)-structure and characteristic tensors
\(T_0,R_0,\mathfrak I_0\), and let
\(A_0:\mathbb R^n\times\mathbb R^n\to\mathbb R^n\) be a \(G\)-invariant
skew-symmetric map. Then \(A_0\) induces a well-defined tensor
\(A\in\Gamma(T^*M\otimes T^*M\otimes TM)\) by
\[
A_x(pu,pv)=pA_0(u,v), \quad u,v\in\mathbb R^n,\ p\in P_x.
\]
By the torsion deformation lemma of \cite[Lemma~3]{marin},
\(\nabla'=\nabla+\tfrac12A\) defines a connection on \(M\) such that
\((M,\nabla',P)\) is again infinitesimally homogeneous.

For \(L\in\mathfrak{gl}(\mathbb R^n)\), let \(D_LA_0\) denote the
derivation of \(A_0\) induced by \(L\),
\begin{equation}\label{eq:DA-general}
(D_LA_0)(v,w)=L\bigl(A_0(v,w)\bigr)-A_0(Lv,w)-A_0(v,Lw).
\end{equation}
Since \(A_0\) is \(G\)-invariant, \(D_LA_0=0\) for every
\(L\in\mathfrak g\); consequently \(D_{\lambda(u)}A_0\) depends only on
the class \(\lambda(u)+\mathfrak g=\mathfrak I_0(u)\).

\begin{prop}\label{prop:general-invariant-deformation}
Let \((M,\nabla,P)\) be infinitesimally homogeneous and let
\(A_0:\mathbb R^n\times\mathbb R^n\to\mathbb R^n\) be a \(G\)-invariant
skew-symmetric map. Then \((M,\nabla',P)\), with
\(\nabla'=\nabla+\tfrac12A\), is again infinitesimally homogeneous.
Moreover, if \(\lambda\) is a lift of the inner torsion of \(\nabla\),
the characteristic tensors of \((M,\nabla',P)\) are
\begin{equation}\label{eq:general-torsion-transformation}
T_0'(u,v)=T_0(u,v)+A_0(u,v),
\end{equation}
\begin{equation}\label{eq:general-inner-transformation}
\lambda'(u)=\lambda(u)+\tfrac12A_{0,u}, \quad \text{where }\ A_{0,u}(v):=A_0(u,v),
\end{equation}
\begin{equation}\label{eq:general-curvature-deformation}
\begin{aligned}
R_0'(u,v)w={}&R_0(u,v)w+\tfrac12\bigl(D_{\lambda(u)}A_0\bigr)(v,w)-\tfrac12\bigl(D_{\lambda(v)}A_0\bigr)(u,w)\\
&+\tfrac12A_0\bigl(T_0(u,v),w\bigr)+\tfrac14\bigl[A_{0,u},A_{0,v}\bigr]w.
\end{aligned}
\end{equation}
\end{prop}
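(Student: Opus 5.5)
Infinitesimal homogeneity of \((M,\nabla',P)\) is already given by the torsion deformation lemma \cite[Lemma~3]{marin}. What remains is to compute the three characteristic tensors, and I will do all of it in a local section \(s:U\to P\) using the lift convention \eqref{eq:lift-convention}. The key observation is that \(A\) has constant components in frames of \(P\). Concretely, \(A_x(s(x)u,s(x)v)=s(x)A_0(u,v)\), so
\[
\nabla'_{s(x)u}(s\,w)=s(x)\bigl(\Lambda^s_x(u)w+\tfrac12A_0(u,w)\bigr).
\]
Hence \(\Lambda'^s_x(u)=\Lambda^s_x(u)+\tfrac12A_{0,u}\). Since \(\Lambda^s_x(u)-\lambda(u)\in\mathfrak g\), the map \(\lambda'=\lambda+\tfrac12A_{0,\cdot}\) is a lift of the new inner torsion, which gives \eqref{eq:general-inner-transformation}. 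For the torsion, the standard formula for \(T'\) when a tensor is added to a connection gives \(T'(X,Y)=T(X,Y)+\tfrac12A(X,Y)-\tfrac12A(Y,X)\). Skew-symmetry then yields \(T'=T+A\), and evaluating in frames of \(P\) gives \eqref{eq:general-torsion-transformation}.

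For the curvature I will use the general formula for the change of curvature under \(\nabla'=\nabla+B\) with \(B=\tfrac12A\):
\[
R'(X,Y)Z=R(X,Y)Z+(\nabla_XB)(Y,Z)-(\nabla_YB)(X,Z)+B(T(X,Y),Z)+B(X,B(Y,Z))-B(Y,B(X,Z)).
\]
This follows by direct expansion of the definition of \(R'\) together with the torsion identity \([X,Y]=\nabla_XY-\nabla_YX-T(X,Y)\). The last two terms, in frames, give \(\tfrac14[A_{0,u},A_{0,v}]w\). The torsion term gives \(\tfrac12A_0(T_0(u,v),w)\).

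The one step that needs care is computing \(\nabla A\) in frames. I will evaluate \((\nabla_{s u}A)(s v,s w)\) with \(u,v,w\) constant. Since \(A(s v,s w)=s\,A_0(v,w)\) is a constant-coefficient field, \eqref{eq:lift-convention} gives
\[
(\nabla_{su}A)(sv,sw)=s\bigl(\Lambda^s(u)A_0(v,w)-A_0(\Lambda^s(u)v,w)-A_0(v,\Lambda^s(u)w)\bigr)=s\,(D_{\Lambda^s(u)}A_0)(v,w).
\]
Because \(D_LA_0=0\) for \(L\in\mathfrak g\) and \(\Lambda^s(u)-\lambda(u)\in\mathfrak g\), this equals \(s\,(D_{\lambda(u)}A_0)(v,w)\). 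This step is what makes the result independent of the section and of the lift; the \(G\)-invariance of \(A_0\) enters only here, and for a disconnected \(G\) one still has \(D_LA_0=0\) on \(\mathfrak g\) by differentiating along the identity component.

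Substituting this into the curvature formula with the factor \(\tfrac12\) yields \eqref{eq:general-curvature-deformation}, and this expression is visibly constant in the frames of \(P\). Beyond that, only the signs and the ordering convention \(A_{0,u}(v)=A_0(u,v)\) need checking.
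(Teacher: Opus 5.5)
Your proposal is correct and follows essentially the same route as the paper's proof. Both write \(\nabla'=\nabla+\tfrac12A\), read off \(\Lambda'^{\,s}_x(u)=\Lambda^s_x(u)+\tfrac12A_{0,u}\) from the lift convention, compute \((\nabla_{pu}A)(pv,pw)=p\,(D_{\Lambda^s_x(u)}A_0)(v,w)=p\,(D_{\lambda(u)}A_0)(v,w)\) using \(D_LA_0=0\) for \(L\in\mathfrak g\), and substitute into the standard curvature formula for a connection perturbed by a tensor.
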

\begin{proof}
Put \(S=\tfrac12A\), so that \(\nabla'=\nabla+S\) with
\(S_X Y=S(X,Y)\). Since \(S\) is skew-symmetric,
\(T'(X,Y)=T(X,Y)+S(X,Y)-S(Y,X)=T(X,Y)+A(X,Y)\), which gives
\eqref{eq:general-torsion-transformation}. By
\eqref{eq:lift-convention}, for a local section \(s\) of \(P\),
\[
\nabla'_{s(x)u}(s\,w)=s(x)\bigl(\Lambda^s_x(u)w+\tfrac12A_0(u,w)\bigr),
\]
so \(\Lambda'^{\,s}_x(u)=\Lambda^s_x(u)+\tfrac12A_{0,u}\), which gives
\eqref{eq:general-inner-transformation}. Next, since \(A\) has constant
components in the frames of \(P\), \eqref{eq:lift-convention} yields
\[
(\nabla_{pu}A)(pv,pw)=p\,\bigl(D_{\Lambda^s_x(u)}A_0\bigr)(v,w)
=p\,\bigl(D_{\lambda(u)}A_0\bigr)(v,w),
\quad p=s(x),
\]
where the last equality uses \(\Lambda^s_x(u)-\lambda(u)\in\mathfrak g\)
and \(D_LA_0=0\) for \(L\in\mathfrak g\). Finally, the standard formula
for the curvature of \(\nabla'=\nabla+S\),
\[
R'(X,Y)Z=R(X,Y)Z+(\nabla_XS)(Y,Z)-(\nabla_YS)(X,Z)+S\bigl(T(X,Y),Z\bigr)+[S_X,S_Y]Z,
\]
evaluated on \(X=pu\), \(Y=pv\), \(Z=pw\), gives
\eqref{eq:general-curvature-deformation}. In particular, all
characteristic tensors of \(\nabla'\) are constant in the frames of
\(P\).
\end{proof}

Thus the classification with arbitrary torsion reduces to the
classification of the torsion-free models together with the
determination of \(\operatorname{Hom}_G(\Lambda^2\mathbb R^n,\mathbb R^n)\).

\subsection{Some \(\operatorname{U}(n)\)-representation facts}
\label{subsec:unitary-representation-facts}

We finally record the elementary representation-theoretic facts that
will be used in the analysis of \(G=\operatorname{U}(n)\times\{1\}\). Let
\(V=H_0\oplus\mathbb Re\), where \(H_0\simeq\mathbb R^{2n}\) is endowed
with its standard Euclidean metric \(h_0\), standard complex structure
\(J_0\), and fundamental two-form
\(\omega_0(X,Y)=h_0(J_0X,Y)\).
The group \(\operatorname{U}(n)\) acts on \(H_0\) through its standard
real representation and fixes \(e\); we denote by
\(G=\operatorname{U}(n)\times\{1\}\subset\operatorname{GL}(V)\) the
resulting group and by \(\mathfrak g=\mathfrak u(n)\oplus\{0\}\) its Lie
algebra.

\begin{lema}
\label{lem:unitary-facts}
The following hold for every \(n\geq1\):
\begin{enumerate}
\item[(a)] \(\operatorname{End}_{\operatorname{U}(n)}(H_0)=\mathbb R\,\mathrm{Id}_{H_0}\oplus \mathbb R\,J_0\);
\item[(b)] \(\bigl(\Lambda^2H_0^*\bigr)^{\operatorname{U}(n)}=\mathbb R\,\omega_0\) and
\(\operatorname{Hom}_{\operatorname{U}(n)}(H_0,\mathbb R)=0\);
\item[(c)] \(\operatorname{Hom}_{\operatorname{U}(n)}(\Lambda^2H_0,H_0)=0\),
\(\operatorname{Hom}_{\operatorname{U}(n)}\bigl(H_0,\mathfrak{gl}(H_0)\bigr)=0\), and
\(\operatorname{Hom}_{\operatorname{U}(n)}\bigl(H_0,\mathfrak{gl}(H_0)/\mathfrak u(n)\bigr)=0\);
\item[(d)] the fixed-point subspace of \(\operatorname{U}(n)\) in
\(\mathfrak{gl}(H_0)/\mathfrak u(n)\) is spanned by the class of
\(\mathrm{Id}_{H_0}\).
\end{enumerate}
\end{lema}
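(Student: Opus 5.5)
The whole lemma rests on one device: the central circle \(\{e^{i\theta}\mathrm{Id}\}\subset\operatorname{U}(n)\) acts on \(H_0\) with weight one, and this already forces most of the vanishing statements. Write \(c_\theta=\cos\theta\,\mathrm{Id}+\sin\theta\,J_0\in\operatorname{U}(n)\). I will use the scalar \(-\mathrm{Id}=c_\pi\) for parity arguments and the full circle for the finer ones.

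For (b), second part, and the first two items of (c), the parity argument suffices. If \(f\in\operatorname{Hom}_{\operatorname{U}(n)}(H_0,\mathbb R)\), then \(f(-x)=f(x)\), so \(f=0\). If \(\phi\in\operatorname{Hom}_{\operatorname{U}(n)}(\Lambda^2H_0,H_0)\), then \(\phi(-x,-y)=-\phi(x,y)\); the left side equals \(\phi(x,y)\), so \(\phi=0\). If \(\psi\in\operatorname{Hom}_{\operatorname{U}(n)}(H_0,\mathfrak{gl}(H_0))\), then \(\mathrm{Ad}_{-\mathrm{Id}}\) is trivial while \(x\mapsto -x\) flips the sign, so again \(\psi=0\).

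For the quotient statement in (c), I take an equivariant map into \(\mathfrak{gl}(H_0)/\mathfrak u(n)\). Because \(\mathfrak u(n)\) is \(\operatorname{Ad}\)-invariant and \(\operatorname{U}(n)\) is compact, averaging gives an invariant complement, for example the \(h_0\)-orthogonal complement \(\mathfrak m\) of \(\mathfrak u(n)\) in \(\mathfrak{gl}(H_0)\) under the trace form. Then \(\mathfrak{gl}/\mathfrak u(n)\simeq\mathfrak m\) equivariantly, and the map lifts to an equivariant map \(H_0\to\mathfrak m\subset\mathfrak{gl}(H_0)\). This lift vanishes by the previous step, so the original map vanishes too.

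For (a), an endomorphism \(A\) commuting with \(\operatorname{U}(n)\) commutes with \(J_0\in\operatorname{U}(n)\), so \(A\) is \(\mathbb C\)-linear on \(H_0\simeq\mathbb C^n\). It also commutes with all of \(\operatorname{U}(n)\), which acts irreducibly on \(\mathbb C^n\). By Schur's lemma \(A\) is a complex scalar \(a+bJ_0\). Alternatively one can argue directly: \(A\) commutes with the diagonal unitary matrices \(\mathrm{diag}(e^{i\theta_k})\), so it preserves each complex line \(\mathbb Ce_k\). Permutation matrices then force the same scalar on every line, and for \(n\ge2\) the rotations mixing \(e_1,e_2\) show that each restriction is complex-linear with equal entries. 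For \(n=1\), \(\operatorname{U}(1)=\mathrm{SO}(2)\) and the commutant is \(\mathbb C\) directly.

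For the first part of (b), write an invariant \(\sigma\in\Lambda^2H_0^*\) as \(\sigma(X,Y)=h_0(SX,Y)\) with \(S\) skew-adjoint. Invariance of \(\sigma\) and of \(h_0\) means \(S\) commutes with \(\operatorname{U}(n)\), so by (a) \(S=a\,\mathrm{Id}+bJ_0\). Skew-adjointness forces \(a=0\), hence \(\sigma=b\,\omega_0\).

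For (d), apply the same averaging: \(\operatorname{U}(n)\)-fixed classes in the quotient correspond to fixed vectors in \(\mathfrak m\), and these are \(\operatorname{U}(n)\)-commuting endomorphisms. Their span is \(\mathbb R\mathrm{Id}\oplus\mathbb RJ_0\) by (a). Since \(J_0\in\mathfrak u(n)\), only the class of \(\mathrm{Id}\) survives, and it is nonzero because \(\mathrm{Id}\notin\mathfrak u(n)\), the latter consisting of skew-adjoint maps.

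The only delicate point is justifying that equivariant maps into the quotient lift equivariantly. For compact groups this follows from complete reducibility via the invariant complement; concretely \(\mathfrak m\) is the space of \(h_0\)-symmetric maps anticommuting with \(J_0\) plus the \(h_0\)-symmetric maps commuting with \(J_0\), that is, the symmetric endomorphisms. This is the main step to state carefully.
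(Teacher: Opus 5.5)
Your proof is correct and follows the paper's route: Schur's lemma for (a), a skew-adjoint representing operator for (b), the central element \(-\mathrm{Id}_{H_0}\) for the vanishing statements, and an invariant complement of \(\mathfrak u(n)\) for (d). For the quotient statement in (c) the lift is unnecessary, since \(\operatorname{Ad}_{-\mathrm{Id}}\) already acts trivially on \(\mathfrak{gl}(H_0)/\mathfrak u(n)\); this is how the paper argues.

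One correction to your last paragraph. For \(n\geq2\) the symmetric endomorphisms do not form a complement of \(\mathfrak u(n)\): they have dimension \(2n^2+n\), while the complement has dimension \(3n^2\). The trace-form orthogonal complement is \(\mathfrak m=\operatorname{Sym}(H_0)\oplus\{A\in\mathfrak{so}(H_0):AJ_0=-J_0A\}\). Since your argument only uses the existence of an invariant complement, nothing else is affected.
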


\begin{proof}
(a) Since \(J_0\in\operatorname{U}(n)\), every equivariant endomorphism
commutes with \(J_0\), hence is complex linear on \(H_0\simeq\mathbb C^n\).
Since \(\mathbb C^n\) is an irreducible complex representation of
\(\operatorname{U}(n)\), Schur's lemma shows that such an endomorphism is
a complex scalar \(a\,\mathrm{Id}_{H_0}+b\,J_0\).

(b) An invariant alternating form can be written as \(h_0(L\cdot,\cdot)\)
with \(L\) equivariant and skew-adjoint; by (a),
\(L=a\,\mathrm{Id}_{H_0}+b\,J_0\), and skew-adjointness forces \(a=0\).
The second assertion follows from the fact that
\(-\mathrm{Id}_{H_0}\in\operatorname{U}(n)\) acts by \(-1\) on \(H_0\) and
trivially on \(\mathbb R\).

(c) The element \(-\mathrm{Id}_{H_0}\in\operatorname{U}(n)\) acts by
\(-1\) on \(H_0\) and trivially on \(\Lambda^2H_0\), on
\(\mathfrak{gl}(H_0)\) and on \(\mathfrak{gl}(H_0)/\mathfrak u(n)\).
Hence every equivariant map between these modules and \(H_0\), in either
direction, vanishes.

(d) Since \(\operatorname{U}(n)\) is compact, \(\mathfrak u(n)\) admits an
invariant complement in \(\mathfrak{gl}(H_0)\), so the fixed points of
the quotient are the images of the fixed points of
\(\mathfrak{gl}(H_0)\). By (a) these are
\(\mathbb R\,\mathrm{Id}_{H_0}\oplus\mathbb R\,J_0\), and
\(J_0\in\mathfrak u(n)\).
\end{proof}

The central element \(-\mathrm{Id}_{H_0}\) also gives a parity rule for
\(G\)-invariant tensors on \(V\). Let
\[
\mathfrak{gl}(V)=\mathfrak{gl}(V)_+\oplus\mathfrak{gl}(V)_-,
\]
where \(\mathfrak{gl}(V)_+\) consists of the endomorphisms preserving
\(H_0\) and \(\mathbb Re\) (block-diagonal) and \(\mathfrak{gl}(V)_-\) of
those interchanging them (block off-diagonal). Note that
\(\mathfrak g\subset\mathfrak{gl}(V)_+\).

\begin{lema}[Parity]
\label{lem:parity}
Let \(T_0:\Lambda^2V\to V\), \(R_0:\Lambda^2V\to\mathfrak{gl}(V)\) and
\(\mathfrak I_0:V\to\mathfrak{gl}(V)/\mathfrak g\) be \(G\)-equivariant,
and let \(\lambda\) be a lift of \(\mathfrak I_0\). Then, for
\(X,Y\in H_0\),
\[
T_0(X,Y)\in\mathbb Re,\quad T_0(X,e)\in H_0,
\]
\[
R_0(X,Y)\in\mathfrak{gl}(V)_+,\quad R_0(X,e)\in\mathfrak{gl}(V)_-,
\]
\[
\lambda(X)\in\mathfrak{gl}(V)_-+\mathfrak g,\quad \lambda(e)\in\mathfrak{gl}(V)_+.
\]
\end{lema}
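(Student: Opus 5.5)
The whole lemma comes from the central element \(\sigma=-\mathrm{Id}_{H_0}\oplus 1\in G\), which acts on \(V=H_0\oplus\mathbb Re\) by \(-1\) on \(H_0\) and \(+1\) on \(e\). Conjugation by \(\sigma\) acts on \(\mathfrak{gl}(V)\) as \(+1\) on the block-diagonal part \(\mathfrak{gl}(V)_+\) and as \(-1\) on the off-diagonal part \(\mathfrak{gl}(V)_-\). This is because an off-diagonal block maps between the \((-1)\)- and \((+1)\)-eigenspaces of \(\sigma\). So \(V_\pm\), the \(\pm1\)-eigenspaces of \(\sigma\) on \(V\), are \(V_-=H_0\) and \(V_+=\mathbb Re\), and \(\mathfrak{gl}(V)_\pm\) are exactly the \(\pm1\)-eigenspaces of \(\operatorname{Ad}_\sigma\).

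For \(T_0\) and \(R_0\), apply the invariance identities with \(g=\sigma\).
\begin{itemize}
\item For \(X,Y\in H_0\) we get \(\sigma T_0(X,Y)=T_0(-X,-Y)=T_0(X,Y)\). Hence \(T_0(X,Y)\) lies in the \(+1\)-eigenspace, namely \(\mathbb Re\).
\item Similarly \(\sigma T_0(X,e)=T_0(-X,e)=-T_0(X,e)\), so \(T_0(X,e)\in H_0\).
\item For the curvature, \(\operatorname{Ad}_\sigma R_0(X,Y)=R_0(X,Y)\), so \(R_0(X,Y)\in\mathfrak{gl}(V)_+\).
\item Likewise \(\operatorname{Ad}_\sigma R_0(X,e)=-R_0(X,e)\), so \(R_0(X,e)\in\mathfrak{gl}(V)_-\).
\end{itemize}

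For the inner torsion we work with the lift \(\lambda\), and the invariance only holds modulo \(\mathfrak g\):
\[
\operatorname{Ad}_\sigma\lambda(u)-\lambda(\sigma u)\in\mathfrak g .
\]
Write \(\lambda(u)=\lambda(u)_++\lambda(u)_-\) according to \(\mathfrak{gl}(V)=\mathfrak{gl}(V)_+\oplus\mathfrak{gl}(V)_-\).

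For \(u=X\in H_0\), the condition gives
\[
\lambda(X)_+-\lambda(X)_-+\lambda(X)\in\mathfrak g, \quad\text{i.e.}\quad 2\lambda(X)_+\in\mathfrak g .
\]
Hence \(\lambda(X)=\lambda(X)_-+\lambda(X)_+\in\mathfrak{gl}(V)_-+\mathfrak g\).

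For \(u=e\), the condition gives
\[
\lambda(e)_+-\lambda(e)_--\lambda(e)=-2\lambda(e)_-\in\mathfrak g .
\]
Since \(\mathfrak g\subset\mathfrak{gl}(V)_+\) and \(\mathfrak{gl}(V)_+\cap\mathfrak{gl}(V)_-=0\), this forces \(\lambda(e)_-=0\). Therefore \(\lambda(e)\in\mathfrak{gl}(V)_+\), as claimed.

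The only step needing care is this last one. There we must use \(\mathfrak g\subset\mathfrak{gl}(V)_+\) to conclude that \(\lambda(e)\) itself, and not merely its class modulo \(\mathfrak g\), is block-diagonal. Everything else follows directly from the equivariance identities.
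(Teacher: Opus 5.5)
Your proof is correct and follows the paper's argument: both apply the equivariance identities to the central element \(-\mathrm{Id}_{H_0}\oplus 1\in G\), whose adjoint action is \(\pm1\) on \(\mathfrak{gl}(V)_\pm\). Your explicit treatment of the lift, using \(\mathfrak g\subset\mathfrak{gl}(V)_+\) to get \(\lambda(e)\in\mathfrak{gl}(V)_+\) exactly rather than only modulo \(\mathfrak g\), spells out what the paper compresses into the decomposition \(\mathfrak{gl}(V)/\mathfrak g=\mathfrak{gl}(V)_+/\mathfrak g\oplus\mathfrak{gl}(V)_-\).
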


\begin{proof}
The element \(\varepsilon=-\mathrm{Id}_{H_0}\oplus1\) belongs to \(G\).
It acts by \(-1\) on \(H_0\) and by \(+1\) on \(\mathbb Re\), while
\(\operatorname{Ad}_\varepsilon\) acts by \(+1\) on
\(\mathfrak{gl}(V)_+\) and by \(-1\) on \(\mathfrak{gl}(V)_-\), and
preserves \(\mathfrak g\). Applying the equivariance with \(g=\varepsilon\)
gives, for instance, \(T_0(X,Y)=T_0(\varepsilon X,\varepsilon Y)=\varepsilon T_0(X,Y)\)
and \(-T_0(X,e)=T_0(\varepsilon X,\varepsilon e)=\varepsilon T_0(X,e)\).
The remaining assertions are obtained in the same way, using that
\(\mathfrak{gl}(V)/\mathfrak g=\mathfrak{gl}(V)_+/\mathfrak g\oplus\mathfrak{gl}(V)_-\)
as \(\operatorname{Ad}_\varepsilon\)-modules.
\end{proof}

For later use, we also recall the following curvature fact.

\begin{lema}[Invariant K\"ahler curvature tensors]
\label{lem:unitary-kahler-curvature}
For \(X,Y,Z\in H_0\), set
\begin{equation}
\label{eq:kahler-model-tensor}
K_{X,Y}Z
=
\frac14\Bigl(
h_0(Y,Z)X-h_0(X,Z)Y
+\omega_0(Y,Z)J_0X-\omega_0(X,Z)J_0Y
-2\,\omega_0(X,Y)J_0Z
\Bigr),
\end{equation}
and \(K(X,Y,Z,W)=h_0(K_{X,Y}Z,W)\). Then \(K_{X,Y}\in\mathfrak u(n)\),
and \(K\) is a \(\operatorname{U}(n)\)-invariant K\"ahler algebraic
curvature tensor normalized by
\begin{equation}
\label{eq:kahler-normalization}
K(X,J_0X,J_0X,X)=\|X\|^4,
\quad X\in H_0;
\end{equation}
that is, \(K\) is the curvature tensor of the complex space form of
constant holomorphic sectional curvature \(1\).

Let \(\mathcal R:H_0^4\to\mathbb R\) be a \(\operatorname{U}(n)\)-invariant
algebraic curvature tensor such that
\[
\mathcal R(X,Y,J_0Z,J_0W)=\mathcal R(X,Y,Z,W).
\]
Then \(\mathcal R=\kappa K\), where
\(\kappa=\mathcal R(X,J_0X,J_0X,X)\) for any unit vector \(X\in H_0\).
\end{lema}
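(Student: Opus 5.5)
We split the proof into two parts: the properties of \(K\) itself, and then the uniqueness statement for \(\mathcal R\).

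\emph{Properties of \(K\).} Skew-symmetry of \(K_{X,Y}Z\) in \(X,Y\) can be read off \eqref{eq:kahler-model-tensor}. To see \(K_{X,Y}\in\mathfrak u(n)\), check two things.
\begin{itemize}
\item \(K_{X,Y}\) is \(h_0\)-skew. Each of the maps \(Z\mapsto h_0(Y,Z)X-h_0(X,Z)Y\), \(Z\mapsto\omega_0(Y,Z)J_0X-\omega_0(X,Z)J_0Y\) and \(Z\mapsto J_0Z\) is skew. For the middle one this uses \(\omega_0(Y,Z)=-h_0(Y,J_0Z)\) (from \(J_0^2=-1\) and \(J_0\) orthogonal), so it equals \(Z\mapsto h_0(J_0Y,Z)J_0X-h_0(J_0X,Z)J_0Y\), which has the same form as the first map.
\item \(K_{X,Y}\) commutes with \(J_0\). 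Substituting \(J_0Z\) gives \(h_0(Y,J_0Z)=\omega_0(Z,Y)\cdot(\text{sign})\) and \(\omega_0(Y,J_0Z)=h_0(J_0Y,J_0Z)=h_0(Y,Z)\) up to sign. The first two groups then swap into each other with a factor of \(J_0\), and the last term commutes trivially.
\end{itemize}

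The first Bianchi identity follows from a cyclic sum: the \(h_0\)-terms cancel pairwise, and the \(\omega_0\)-terms give \(\omega_0(Y,Z)J_0X+\omega_0(Z,X)J_0Y+\omega_0(X,Y)J_0Z\) twice with opposite total coefficient \(1-1\cdot\)… Concretely, the terms \(-\omega_0(X,Z)J_0Y\) cycle to \(+\omega_0(Z,X)J_0Y\). This yields \(+1\) times the cyclic sum from the second group against \(-2\) from the third, which does not cancel. The cancellation therefore has to come from recomputing with the correct sign conventions. Rather than guessing, we will recognize \(K\) as \(\frac14\) of the standard expression for the curvature of \(\mathbb{CP}^n\) with holomorphic curvature \(1\) (in the paper's sign convention for \(R\)), where the Bianchi identity and pair symmetry are classical. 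We will verify the coefficient \(-2\) against the normalization. \(U(n)\)-invariance is clear, because \(h_0,\omega_0,J_0\) are invariant.

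For the normalization, put \(Y=J_0X\), \(Z=J_0X\), \(W=X\). Then \(h_0(J_0X,J_0X)=\|X\|^2\), \(h_0(X,J_0X)=0\), \(\omega_0(J_0X,J_0X)=0\) and \(\omega_0(X,J_0X)=h_0(J_0X,J_0X)=\|X\|^2\). Hence
\[
4K_{X,J_0X}J_0X=\|X\|^2X-0+0-\|X\|^2J_0J_0X-2\|X\|^2J_0J_0X=4\|X\|^2X.
\]
Pairing with \(X\) gives \(\|X\|^4\), as required.

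\emph{Uniqueness.} Let \(\mathcal R\) be as in the statement. Invariance of \(\mathcal R\) under \(J_0\) in the last pair, together with pair symmetry, gives full Kähler symmetry. Set \(q(X)=\mathcal R(X,J_0X,J_0X,X)\), the unnormalized holomorphic sectional curvature. The key classical fact is that a Kähler algebraic curvature tensor is determined by \(q\). One proves this by polarization: \(\mathcal R\) is recovered from the symmetric \(4\)-form obtained by polarizing \(q\), using the Bianchi identity and the \(J_0\)-invariance. This is exactly the argument showing that constant holomorphic sectional curvature forces the complex space form tensor, and it is the main technical step. We would cite it (Kobayashi–Nomizu, Vol. II, IX Prop. 7.1) and apply it to the difference \(\mathcal R-\kappa K\).

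It then remains to show that \(q\) is a constant multiple of \(\|X\|^4\). Here \(q\) is a \(U(n)\)-invariant function on \(H_0\), and \(U(n)\) acts transitively on the unit sphere of \(\mathbb C^n\), so \(q(X)=\kappa\|X\|^4\) with \(\kappa=q(X)\) for any unit \(X\). Consequently \(\mathcal R-\kappa K\) is a Kähler algebraic curvature tensor with vanishing holomorphic sectional curvature, so it vanishes by the cited fact, giving \(\mathcal R=\kappa K\).
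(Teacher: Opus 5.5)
Your proof follows the paper's route. The normalization computation \(K_{X,J_0X}J_0X=\|X\|^2X\) is the same, and so is the uniqueness argument: \(\operatorname{U}(n)\)-transitivity on the unit sphere makes the holomorphic sectional curvature constant, and Kobayashi--Nomizu, Vol.~II, Chapter~IX, Proposition~7.1, applied to \(\mathcal R-\kappa K\), finishes it.

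\textbf{Correction to your Bianchi step.} As written it is wrong. Put \(S=\omega_0(Y,Z)J_0X+\omega_0(Z,X)J_0Y+\omega_0(X,Y)J_0Z\). The cyclic sum of \(\omega_0(Y,Z)J_0X\) is \(S\). The cyclic sum of \(-\omega_0(X,Z)J_0Y=\omega_0(Z,X)J_0Y\) is also \(S\). So the middle group contributes \(2S\), not \(S\), and this cancels the \(-2S\) from the last term.

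\textbf{Consequence.} The direct verification goes through, and that is all the paper means by ``follow directly from the formula''. You do not need to fall back on recognizing \(K\) as the classical \(\mathbb{CP}^n\) tensor. That recognition is correct: \(K\) is exactly the standard expression \(\tfrac c4[\cdots]\) with \(c=1\). But as you present it, it is asserted rather than proved. Checking the normalization at the single value \(K(X,J_0X,J_0X,X)\) cannot establish the Bianchi identity, and without Bianchi the pair symmetry of \(K\) is also left unproved.
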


\begin{proof}
That \(K_{X,Y}\) is skew-adjoint and commutes with \(J_0\), that \(K\)
satisfies the first Bianchi identity, and that \(K\) is
\(\operatorname{U}(n)\)-invariant follow directly from
\eqref{eq:kahler-model-tensor}, since it is built from the invariant
tensors \(h_0\), \(\omega_0\) and \(J_0\). For a unit vector \(X\), one
computes \(K_{X,J_0X}J_0X=X\), which gives \eqref{eq:kahler-normalization}.

For \(X\neq0\), consider the holomorphic sectional curvature of
\(\mathcal R\),
\[
H_{\mathcal R}(X)
=
\frac{\mathcal R(X,J_0X,J_0X,X)}{\|X\|^4}.
\]
Since \(\operatorname{U}(n)\) acts transitively on the unit sphere of
\(H_0\) and \(\mathcal R\) is \(\operatorname{U}(n)\)-invariant,
\(H_{\mathcal R}\equiv\kappa\) is constant. Hence
\(\mathcal R-\kappa K\) is a K\"ahler algebraic curvature tensor with
vanishing holomorphic sectional curvature. Since a K\"ahler algebraic
curvature tensor is determined by its holomorphic sectional curvature
\cite[Chapter~IX, Proposition~7.1]{KobayashiNomizuII}, we conclude
\(\mathcal R=\kappa K\).
\end{proof}

For \(n\geq2\), \(\operatorname{U}(n)\)-invariance alone does not force
an algebraic curvature tensor to be a multiple of \(K\); the additional
K\"ahler identity in Lemma~\ref{lem:unitary-kahler-curvature} is
essential. For \(n=1\), the space of algebraic curvature tensors is
already one-dimensional.

%%%%%%%%%%%%%%%%%%%%%%%
%%%%%%%%%%%%%%%%%%%%%%

\section{Infinitesimally homogeneous structures for the symplectic group
\(\operatorname{Sp}(2n,\mathbb R)\)}
\label{sec:symplectic}

Let \(V=\mathbb R^{2n}\), endowed with its standard symplectic form
\(\omega_0\). We denote by
\(\operatorname{Sp}(V,\omega_0)=\operatorname{Sp}(2n,\mathbb R)\) the group
of linear automorphisms of \(V\) preserving \(\omega_0\), and by
\(\mathfrak{sp}(V,\omega_0)\) its Lie algebra.

\subsection{Geometric interpretation of the \(G\)-structure}
\label{subsec:symplectic-geometric-interpretation}

An \(\operatorname{Sp}(V,\omega_0)\)-structure on a \(2n\)-dimensional
manifold \(M\) is equivalent to the choice of a nondegenerate
\(2\)-form \(\omega\) on \(M\).

Indeed, if \(P\subseteq \operatorname{FR}(TM)\) is an
\(\operatorname{Sp}(V,\omega_0)\)-structure, we define \(\omega_x\) by
\[
\omega_x(pu,pv)=\omega_0(u,v),
\quad p\in P_x,\quad u,v\in V.
\]
This is well defined because the transition maps between frames in
\(P\) belong to \(\operatorname{Sp}(V,\omega_0)\), and the action of
\(\operatorname{Sp}(V,\omega_0)\) preserves \(\omega_0\). Conversely, a
nondegenerate \(2\)-form \(\omega\) determines the bundle of symplectic
frames
\[
P=\left\{p:V\to T_xM:\ p^*\omega_x=\omega_0\right\}.
\]

At this stage, \(\omega\) is not assumed to be closed. Its closedness
will follow from the infinitesimal homogeneity conditions.

\subsection{Characteristic tensors associated with the symplectic group}
\label{subsec:symplectic-characteristic-tensors}
Let \((M,\nabla,P)\) be an infinitesimally homogeneous manifold with
structure group \(\mathrm{Sp}(2n,\mathbb R)\). Since \(-I_{V}\) belongs
to \(\mathrm{Sp}(2n,\mathbb R)\), it follows from
\cite[Lemma 3.1]{Marin-Blazquez} that necessarily
\[
T_0=0,
\quad
\mathfrak I_0=0.
\]
The condition \(\mathfrak I_0=0\) means that \(\nabla\) is compatible with
the \(\mathrm{Sp}(2n,\mathbb R)\)-structure. Equivalently,
\(\nabla\omega=0\). Moreover, since \(T_0=0\) and \(\mathfrak I_0=0\), we
may take \(\lambda=0\) in the torsion-free algebraic relation
\eqref{eq:torsion-free-compatibility}. Hence
\[
R_0(u,v)\in\mathfrak{sp}(V,\omega_0),
\quad u,v\in V.
\]
Thus the curvature characteristic tensor may be regarded as an
\(\operatorname{Sp}(V,\omega_0)\)-equivariant linear map
\(R_0:\Lambda^2V\to\mathfrak{sp}(V,\omega_0)\).

\begin{lema}
\label{lem:no-sp-equivariant-curvature}
There is no nonzero \(\operatorname{Sp}(V,\omega_0)\)-equivariant linear
map \(\Lambda^2V\to\mathfrak{sp}(V,\omega_0)\). Equivalently,
\[
\operatorname{Hom}_{\operatorname{Sp}(V,\omega_0)}
\bigl(\Lambda^2V,\mathfrak{sp}(V,\omega_0)\bigr)=0.
\]
\end{lema}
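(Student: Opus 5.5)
The plan is to identify both sides as explicit $\operatorname{Sp}(V,\omega_0)$-modules and compare their irreducible decompositions. Via $\omega_0$, the adjoint representation is identified with the symmetric square:
\[
\mathfrak{sp}(V,\omega_0)\cong S^2V,\qquad L\mapsto \omega_0(L\cdot,\cdot),
\]
and $S^2V$ is irreducible; it is the adjoint representation of the simple Lie algebra $\mathfrak{sp}(2n,\mathbb R)$. On the other side, contraction with $\omega_0$ gives an equivariant splitting
\[
\Lambda^2V=\mathbb R\,\omega_0^{-1}\oplus\Lambda^2_0V,
\]
where $\Lambda^2_0V$ is the kernel of the contraction $u\wedge v\mapsto\omega_0(u,v)$. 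This kernel is irreducible, or zero when $n=1$.

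The first step is to kill the trivial summand. An equivariant image of $\mathbb R\,\omega_0^{-1}$ in $\mathfrak{sp}(V,\omega_0)$ is an $\operatorname{Ad}$-fixed vector. Such a vector lies in the center of $\mathfrak{sp}(2n,\mathbb R)$, which is zero because the algebra is simple.

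The second step is to show that $\operatorname{Hom}_{\operatorname{Sp}}(\Lambda^2_0V,S^2V)=0$. I would argue directly at the Lie algebra level, since $\operatorname{Sp}(2n,\mathbb R)$ is connected. Take the Cartan subalgebra of diagonal elements $H=\mathrm{diag}(h_1,\dots,h_n,-h_1,\dots,-h_n)$ in a symplectic basis. The weights of $S^2V$ are $\pm2\varepsilon_i$ and $\pm\varepsilon_i\pm\varepsilon_j$ for $i<j$, together with the zero weight of multiplicity $n$. The weights of $\Lambda^2_0V$ are $\pm\varepsilon_i\pm\varepsilon_j$ for $i<j$, together with the zero weight of multiplicity $n-1$. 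The highest weights are $2\varepsilon_1$ and $\varepsilon_1+\varepsilon_2$ respectively. These are distinct, so the irreducible modules are non-isomorphic, and Schur's lemma (after complexification, where both modules remain irreducible) gives zero.

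Alternatively, I would avoid highest weight theory with a concrete argument. Let $\Phi$ be an equivariant map and take a highest weight vector $e_1\wedge e_2$ of $\Lambda^2_0V$. Its image must be a weight vector of weight $\varepsilon_1+\varepsilon_2$ annihilated by all raising operators. In $S^2V$ the only such weight vector, up to scale, is $e_1e_2$. This vector is not annihilated by the raising operator $E$ with $E e_2=e_1$, $E f_1=-f_2$, because $E(e_1e_2)=e_1^2\neq0$. Hence $\Phi(e_1\wedge e_2)=0$, and irreducibility gives $\Phi=0$. For $n=1$ only the trivial summand is present, so the first step already suffices.

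The main obstacle is making the irreducibility claims and the Schur argument rigorous over $\mathbb R$. I would handle this by complexifying. Since $\mathfrak{sp}(2n,\mathbb C)$-modules $S^2\mathbb C^{2n}$ and $\Lambda^2_0\mathbb C^{2n}$ are irreducible, with different highest weights, any real equivariant map extends complex-linearly to an equivariant map between them, and that map vanishes.
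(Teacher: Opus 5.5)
Your proof is correct, and it takes a genuinely different route from the paper.

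The paper turns an equivariant $F$ into the invariant $4$-form $C(u,v,w,z)=\omega_0\bigl(F(u\wedge v)w,z\bigr)$, which is skew in $(u,v)$ and symmetric in $(w,z)$. It then invokes the First Fundamental Theorem of invariant theory for the symplectic group, transferred to $\operatorname{Sp}(2n,\mathbb R)$ by Zariski density, to write $C$ as a combination of the three products of two copies of $\omega_0$. For $n\ge2$ these products are linearly independent, and skew-symmetry in $(u,v)$ leaves only terms that are skew in $(w,z)$, so $C=0$. The case $n=1$ is handled by the trivial center of $\mathfrak{sl}(2,\mathbb R)$, which is exactly your first step.

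You instead split $\Lambda^2V=\mathbb R\,\omega_0^{-1}\oplus\Lambda^2_0V$ and show that neither summand is a constituent of $\mathfrak{sp}(V,\omega_0)\cong S^2V$. You do this either by comparing the highest weights $\varepsilon_1+\varepsilon_2$ and $2\varepsilon_1$, or by the single check $E(e_1e_2)=e_1^2\neq0$. That computation is sound: your $E$ does lie in $\mathfrak{sp}(V,\omega_0)$ and annihilates $e_1\wedge e_2$.

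The paper's argument is short and purely about symmetry types of invariant tensors, but it rests on the FFT, a much heavier theorem. Yours explains the vanishing structurally, as a mismatch of irreducible constituents. The price is quoting the irreducibility of $\Lambda^2_0V$, and for the highest-weight version also of $S^2V$.

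In fact, an ingredient you already use suffices on its own. The image of an equivariant map $\Lambda^2V\to\mathfrak{sp}(V,\omega_0)$ is $\operatorname{Ad}$-invariant, hence an ideal of the simple Lie algebra $\mathfrak{sp}(2n,\mathbb R)$. Since $\dim\Lambda^2V=n(2n-1)<n(2n+1)=\dim\mathfrak{sp}(V,\omega_0)$, this ideal is proper, hence zero. This works for every $n\ge1$, with no decomposition of $\Lambda^2V$, no weights and no complexification.
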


\begin{proof}
Let \(F:\Lambda^2V\to\mathfrak{sp}(V,\omega_0)\) be an
\(\operatorname{Sp}(V,\omega_0)\)-equivariant linear map. Define
\[
C(u,v,w,z)=\omega_0\bigl(F(u\wedge v)w,z\bigr).
\]
Since \(F\) is defined on \(\Lambda^2V\), the tensor \(C\) is
skew-symmetric in \(u,v\). Moreover, because
\(F(u\wedge v)\in\mathfrak{sp}(V,\omega_0)\), the bilinear form
\((w,z)\mapsto\omega_0\bigl(F(u\wedge v)w,z\bigr)\) is symmetric. Hence
\(C(u,v,w,z)=C(u,v,z,w)\). The equivariance of \(F\) and the invariance
of \(\omega_0\) imply that \(C\) is
\(\operatorname{Sp}(V,\omega_0)\)-invariant.

Assume first that \(n\geq2\). By the First Fundamental Theorem of
invariant theory for the symplectic group
\cite{Weyl}, \cite[Chapter~5]{GoodmanWallach}, which applies to
\(\operatorname{Sp}(2n,\mathbb R)\) since this group is Zariski dense in
\(\operatorname{Sp}(2n,\mathbb C)\), every invariant \(4\)-linear form is
a linear combination of the contractions obtained from \(\omega_0\).
Hence
\[
C(u,v,w,z)
=a\,\omega_0(u,v)\omega_0(w,z)
+b\,\omega_0(u,w)\omega_0(v,z)
+c\,\omega_0(u,z)\omega_0(v,w).
\]
For \(n\geq2\) these three forms are linearly independent: if
\(e_1,e_2,f_1,f_2\) are part of a symplectic basis with
\(\omega_0(e_i,f_j)=\delta_{ij}\), their values on
\((e_1,f_1,e_2,f_2)\), \((e_1,e_2,f_1,f_2)\) and
\((e_1,e_2,f_2,f_1)\) are \((1,0,0)\), \((0,1,0)\) and \((0,0,1)\),
respectively. The skew-symmetry in \(u,v\) therefore gives \(c=-b\), so
\[
C(u,v,w,z)
= a\,\omega_0(u,v)\omega_0(w,z)
+b\bigl(\omega_0(u,w)\omega_0(v,z)-\omega_0(u,z)\omega_0(v,w)\bigr).
\]
Both terms on the right-hand side are skew-symmetric in \(w,z\),
whereas \(C\) is symmetric in \(w,z\). Hence \(a=b=0\), and therefore
\(C=0\). Since \(\omega_0\) is nondegenerate, this implies \(F=0\).

It remains to consider \(n=1\). In this case, \(\Lambda^2V\) is
one-dimensional and carries the trivial representation of
\(\operatorname{Sp}(2,\mathbb R)\). Thus an equivariant map
\(F:\Lambda^2V\to\mathfrak{sp}(V,\omega_0)\) would determine an
invariant vector in the adjoint representation of
\(\mathfrak{sp}(2,\mathbb R)\simeq\mathfrak{sl}(2,\mathbb R)\). Since this
Lie algebra has trivial center, such an invariant vector must vanish.
Hence \(F=0\) also in this case.
\end{proof}

\begin{prop}
\label{prop:symplectic-curvature-vanishes}
Let \((M,\nabla,P)\) be an infinitesimally homogeneous manifold with
structure group \(\operatorname{Sp}(2n,\mathbb R)\). Then its curvature
characteristic tensor vanishes, \(R_0=0\).
\end{prop}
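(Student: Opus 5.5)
The proof will combine the facts already recorded in this subsection with Lemma~\ref{lem:no-sp-equivariant-curvature}. The first step is to place \(R_0\) in the right space. Since \(-I_V\in\operatorname{Sp}(2n,\mathbb R)\), \cite[Lemma 3.1]{Marin-Blazquez} gives \(T_0=0\) and \(\mathfrak I_0=0\). We may therefore work in the torsion-free setting of Theorem~\ref{thm:algebraic-characterization} and choose the lift \(\lambda=0\) of \(\mathfrak I_0\). With this choice, the compatibility relation \eqref{eq:torsion-free-compatibility} reduces to
\[
R_0(u,v)\in\mathfrak{sp}(V,\omega_0),\quad u,v\in V,
\]
as already observed before Lemma~\ref{lem:no-sp-equivariant-curvature}.

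The second step is to note that \(R_0\) is skew-symmetric in its two arguments, so it factors through a linear map \(\Lambda^2V\to\mathfrak{sp}(V,\omega_0)\). It is also equivariant: the general invariance property of characteristic tensors gives
\[
R_0(gu,gv)=\operatorname{Ad}_g\bigl(R_0(u,v)\bigr),\quad g\in\operatorname{Sp}(V,\omega_0).
\]
Moreover \(\operatorname{Ad}_g\) preserves \(\mathfrak{sp}(V,\omega_0)\) for such \(g\). Hence \(R_0\) is an element of \(\operatorname{Hom}_{\operatorname{Sp}(V,\omega_0)}\bigl(\Lambda^2V,\mathfrak{sp}(V,\omega_0)\bigr)\).

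The final step applies Lemma~\ref{lem:no-sp-equivariant-curvature}, which says this space is zero, so \(R_0=0\). There is no real obstacle left, because the invariant-theoretic work was done in that lemma. The only point that needs care is the justification that the relation \eqref{eq:torsion-free-compatibility} applies with \(\lambda=0\). This holds because the relation is independent of the chosen lift, and because \(T_0=0\) places us exactly in the torsion-free framework where \cite[Theorem 1]{marin} yields that relation.
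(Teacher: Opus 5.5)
Your proposal is correct and follows the paper's proof essentially step for step. You obtain \(T_0=\mathfrak I_0=0\) from \cite[Lemma 3.1]{Marin-Blazquez}, deduce \(R_0(u,v)\in\mathfrak{sp}(V,\omega_0)\) using the lift \(\lambda=0\), and conclude via Lemma~\ref{lem:no-sp-equivariant-curvature}; your remarks on skew-symmetry and on \(\operatorname{Ad}_g\) preserving \(\mathfrak{sp}(V,\omega_0)\) simply make explicit what the paper leaves implicit.
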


\begin{proof}
As observed above, \(T_0=0\) and \(\mathfrak I_0=0\). Hence the
connection is compatible with the
\(\operatorname{Sp}(V,\omega_0)\)-structure and
\(R_0(u,v)\in\mathfrak{sp}(V,\omega_0)\). Since \(R_0\) is
\(G\)-equivariant, it defines an element of
\[
\operatorname{Hom}_{\operatorname{Sp}(V,\omega_0)}
\bigl(\Lambda^2V,\mathfrak{sp}(V,\omega_0)\bigr).
\]
By
Lemma~\ref{lem:no-sp-equivariant-curvature}, this space is zero.
Therefore \(R_0=0\).
\end{proof}

\begin{teo}
\label{thm:symplectic-classification}
Let \((M,\nabla,P)\) be an infinitesimally homogeneous manifold with
structure group \(\operatorname{Sp}(2n,\mathbb R)\). Then \(P\) is the
bundle of symplectic frames of a symplectic form \(\omega\) on \(M\),
and \(\nabla\) is a flat torsion-free symplectic connection.
Equivalently,
\[
T=0,
\quad
R=0,
\quad
\nabla\omega=0.
\]

Conversely, let \((M,\omega)\) be a symplectic manifold endowed with a
flat torsion-free connection \(\nabla\) such that \(\nabla\omega=0\). If
\(P\) denotes the bundle of symplectic frames of \(\omega\), then
\((M,\nabla,P)\) is infinitesimally homogeneous with structure group
\(\operatorname{Sp}(2n,\mathbb R)\).
\end{teo}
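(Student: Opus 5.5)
The forward direction will be assembled from the facts already established in this section. Given an infinitesimally homogeneous \((M,\nabla,P)\) with structure group \(\operatorname{Sp}(2n,\mathbb R)\), Subsection~\ref{subsec:symplectic-geometric-interpretation} shows that \(P\) is the bundle of symplectic frames of a nondegenerate \(2\)-form \(\omega\). The observation preceding Lemma~\ref{lem:no-sp-equivariant-curvature}, which uses \(-I_V\in\operatorname{Sp}(2n,\mathbb R)\), gives \(T_0=0\) and \(\mathfrak I_0=0\). Proposition~\ref{prop:symplectic-curvature-vanishes} gives \(R_0=0\). Because the representations \(T_x(pu,pv)=pT_0(u,v)\) and \(R_x(pu,pv)pw=pR_0(u,v)w\) hold in every frame of \(P\), we get \(T=0\) and \(R=0\) on \(M\). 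Vanishing inner torsion means \(\nabla\) is compatible with \(P\). Concretely, by \eqref{eq:lift-convention} we have \(\Lambda^s_x(u)\in\mathfrak{sp}(V,\omega_0)\) for every local section \(s\), so differentiating \(\omega(s\,w,s\,z)=\omega_0(w,z)\) yields \(\nabla\omega=0\).

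The remaining point is closedness of \(\omega\), which the statement requires by calling \(\omega\) symplectic. Since \(\nabla\) is torsion-free, the exterior derivative is the alternation of the covariant derivative:
\[
d\omega(X,Y,Z)=\mathfrak S_{X,Y,Z}(\nabla_X\omega)(Y,Z)+\mathfrak S_{X,Y,Z}\,\omega\bigl(T(X,Y),Z\bigr).
\]
Both terms vanish, so \(d\omega=0\). This also shows that \(\nabla\) is a flat torsion-free symplectic connection.

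For the converse, let \((M,\omega)\) be symplectic with a flat torsion-free connection \(\nabla\) satisfying \(\nabla\omega=0\), and let \(P\) be its bundle of symplectic frames. Then \(T=0\) and \(R=0\) are trivially represented by the constant tensors \(T_0=0\) and \(R_0=0\) in every frame. For the inner torsion, take a local section \(s\) of \(P\). Differentiating \(\omega(s\,w,s\,z)=\omega_0(w,z)\) along \(s(x)u\) and using \(\nabla\omega=0\) gives
\[
\omega_0\bigl(\Lambda^s_x(u)w,z\bigr)+\omega_0\bigl(w,\Lambda^s_x(u)z\bigr)=0,
\]
that is, \(\Lambda^s_x(u)\in\mathfrak{sp}(V,\omega_0)\). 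Hence \(\mathfrak I^P=0\), which is represented by \(\mathfrak I_0=0\). All three tensors therefore have constant representations, and Definition~\ref{def:inf-hom} is satisfied. As a consistency check, the triple \((0,0,0)\) satisfies \eqref{eq:torsion-free-compatibility}–\eqref{eq:second-bianchi} trivially, in line with Theorem~\ref{thm:algebraic-characterization}, although the direct verification is all that is needed.

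There is no genuine obstacle; the nontrivial input is Lemma~\ref{lem:no-sp-equivariant-curvature}. The only step needing care is the identification \(\mathfrak I^P=0\iff\nabla\omega=0\), in both directions, via the local-section formula \eqref{eq:lift-convention}, together with the derivation of \(d\omega=0\) from torsion-freeness.
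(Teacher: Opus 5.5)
Your proposal is correct and follows the paper's proof essentially step for step: you take $T_0=0$, $\mathfrak I_0=0$ and $R_0=0$ from the preceding results, read off $T=R=0$ and $\nabla\omega=0$, and get $d\omega=0$ from torsion-freeness; for the converse you note that all three tensors vanish identically. You also make explicit two points the paper leaves implicit, namely the formula $d\omega(X,Y,Z)=\mathfrak S_{X,Y,Z}(\nabla_X\omega)(Y,Z)+\mathfrak S_{X,Y,Z}\,\omega(T(X,Y),Z)$ and the local-section check that $\Lambda^s_x(u)\in\mathfrak{sp}(V,\omega_0)$ is equivalent to $\nabla\omega=0$, and both are correct with the paper's conventions.
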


\begin{proof}
Let \((M,\nabla,P)\) be infinitesimally homogeneous with structure group
\(\operatorname{Sp}(2n,\mathbb R)\). The previous results give
\(T_0=0\), \(\mathfrak I_0=0\), \(R_0=0\). Therefore \(T=0\) and \(R=0\).
Moreover, \(\mathfrak I_0=0\) means that \(\nabla\omega=0\). Since
\(\nabla\) is torsion-free and \(\omega\) is parallel, one has
\(d\omega=0\). Thus \(\omega\) is a symplectic form and \(\nabla\) is a
flat torsion-free symplectic connection.

Conversely, assume that \((M,\omega)\) is a symplectic manifold endowed
with a flat torsion-free connection \(\nabla\) satisfying
\(\nabla\omega=0\). Let \(P\) be the bundle of symplectic frames of
\(\omega\). Then \(\nabla\) is compatible with \(P\), so the inner
torsion vanishes. Since also \(T=0\) and \(R=0\), the tensors \(T\),
\(R\), and \(\mathfrak I^P\) have constant representations in every frame
of \(P\), with characteristic tensors all equal to zero. Hence
\((M,\nabla,P)\) is infinitesimally homogeneous.
\end{proof}

\begin{remark}[Local model]
\label{rem:symplectic-local-model}
The structures of Theorem~\ref{thm:symplectic-classification} are all
locally equivalent to the standard model
\((\mathbb R^{2n},\omega_0,\partial)\), where \(\partial\) denotes the
canonical flat connection. Indeed, since \(\nabla\) is flat and
torsion-free, every point has a neighbourhood with affine coordinates
\((x^1,\dots,x^{2n})\) such that \(\nabla\partial_i=0\). Since \(\omega\)
is parallel, its coefficients \(\omega(\partial_i,\partial_j)\) are
constant, and a linear change of coordinates brings them to the
standard form \(\omega_0\).
\end{remark}

\begin{remark}[Relation to special symplectic connections]
\label{rem:special-symplectic-connections}
Theorem~\ref{thm:symplectic-classification} should not be read as a
restriction on symplectic connections in general: every symplectic
manifold admits torsion-free symplectic connections, and these are
generically non-flat. What the theorem expresses is that, by
Lemma~\ref{lem:no-sp-equivariant-curvature},
\[
\operatorname{Hom}_{\operatorname{Sp}(V,\omega_0)}
\bigl(\Lambda^2V,\mathfrak{sp}(V,\omega_0)\bigr)=0,
\]
so a nonzero curvature tensor can never have the same constant
representative in \emph{all} symplectic frames. Consequently, a
non-flat torsion-free symplectic connection can be infinitesimally
homogeneous only with respect to a reduction of the symplectic frame
bundle to a proper subgroup of \(\operatorname{Sp}(2n,\mathbb R)\).

This is consistent with the theory of \emph{special symplectic
connections} of Cahen and Schwachh\"ofer~\cite{cahen-schwachhofer},
which includes connections of Ricci type, Bochner--K\"ahler and
Bochner--bi-Lagrangian metrics, and connections with exceptional
special symplectic holonomy. For these connections the curvature is
not constant in symplectic frames; it is algebraically determined by
an auxiliary tensor field (for instance, by the Ricci tensor in the
Ricci-type case), whose presence breaks the full
\(\operatorname{Sp}(2n,\mathbb R)\)-symmetry. Determining which of these
geometries are infinitesimally homogeneous with respect to suitable
proper subgroups of \(\operatorname{Sp}(2n,\mathbb R)\) is a natural
question that we do not pursue here.
\end{remark}

%%%%%%%%%%%%%%%%%%%%%%%%
%%%%%%%%%%%%%%%%%%%%%%

\section{Infinitesimally homogeneous structures for the group
\(\operatorname{U}(n)\times\{1\}\)}
\label{sec:unitary-one}

Let \(V=H_0\oplus \mathbb R e\), where \(H_0\simeq \mathbb R^{2n}\) and
\(e=e_{2n+1}\), with \(h_0\), \(J_0\), \(\omega_0\) as in
Subsection~\ref{subsec:unitary-representation-facts}. We consider the
Lie subgroup \(G=\operatorname{U}(n)\times\{1\}\subset \mathrm{GL}(V)\),
\[
G=
\left\{
\begin{pmatrix}
A&0\\
0&1
\end{pmatrix}
:
A\in \operatorname{U}(n)
\right\}
\cong \operatorname{U}(n).
\]
Thus \(G\) acts on the horizontal model space \(H_0\) through the
standard unitary representation and fixes the vertical vector \(e\).

\subsection{Geometric interpretation of the \(G\)-structure}
\label{subsec:unitary-geometric-interpretation}

A \(G\)-structure on a \((2n+1)\)-dimensional manifold \(M\) is
equivalent to the following geometric data:
\(TM=H\oplus\mathbb R\xi\), where \(H\subset TM\) is a rank \(2n\)
distribution, \(\xi\) is a distinguished nowhere vanishing vector field,
and \(H\) is endowed with a Hermitian structure. More explicitly, \(H\)
carries a Riemannian metric \(h\) and an \(h\)-orthogonal almost complex
structure \(J:H\to H\), \(J^2=-\operatorname{Id}_H\).

Indeed, given such data, the corresponding \(G\)-structure consists of
the frames \(p:V\to T_xM\) satisfying \(p(H_0)=H_x\), \(p(e)=\xi_x\),
and such that \(p|_{H_0}:(H_0,h_0,J_0)\to(H_x,h_x,J_x)\) is unitary.
Conversely, a \(G\)-structure determines precisely such a splitting, a
distinguished vector field, and a Hermitian structure on the horizontal
distribution. The stabilizer of the model data \((H_0,h_0,J_0,e)\)
inside \(\operatorname{GL}(V)\) is precisely
\(\operatorname{U}(n)\times\{1\}\).

Equivalently, define a \(1\)-form \(\eta\) by \(\eta(\xi)=1\),
\(\eta|_H=0\), and extend \(h\) to a Riemannian metric \(g\) on \(TM\) by
declaring \(\xi\) to have unit length and to be orthogonal to \(H\).
Thus \(g=h+\eta\otimes\eta\). Define an endomorphism
\(\phi:TM\to TM\) by \(\phi|_H=J\), \(\phi(\xi)=0\). Then
\((\phi,\xi,\eta,g)\) is an almost contact metric structure
\cite{Blair}:
\[
\eta(\xi)=1,
\quad
\ker\eta=H,
\quad
\phi^2=-\operatorname{Id}+\eta\otimes\xi,
\quad
g(\phi X,\phi Y)=g(X,Y)-\eta(X)\eta(Y),
\quad X,Y\in TM.
\]
We also write \(\omega(X,Y)=g(\phi X,Y)\); thus \(\omega\) restricts to
\(h(JX,Y)\) on \(H\) and \(\iota_\xi\omega=0\). Note that the fundamental
\(2\)-form of \cite{Blair}, \(\Phi(X,Y)=g(X,\phi Y)\), equals
\(-\omega\). Throughout, the exterior derivative of a \(1\)-form is
normalized by \(d\eta(X,Y)=X\eta(Y)-Y\eta(X)-\eta([X,Y])\).

No differential condition is imposed at this stage. In particular,
\(H\) is not assumed to be integrable or contact, and the almost contact
metric structure is not assumed to be normal.

\subsection{Invariant candidates for the characteristic tensors}
\label{subsec:unitary-invariant-candidates}

\subsubsection{Invariant torsion tensors}
\label{subsubsec:unitary-torsion}

We first determine the possible \(G\)-equivariant candidates for the
torsion characteristic tensor \(T_0:\Lambda^2V\to V\). Since \(e\) is
fixed by \(G\), we have the \(G\)-equivariant decomposition
\(\Lambda^2V=\Lambda^2H_0\oplus(H_0\wedge e)\).

\begin{prop}
\label{prop:invariant-torsion-Un}
Every \(G\)-equivariant skew-symmetric bilinear map
\(T_0:V\times V\to V\) is of the form
\begin{equation}
\label{eq:torsion-normal-form-Un}
T_0(X,Y)=\tau_3\,\omega_0(X,Y)e,
\quad
T_0(X,e)=\tau_1X+\tau_2J_0X,
\quad
X,Y\in H_0,
\end{equation}
where \(\tau_1,\tau_2,\tau_3\in\mathbb R\). The remaining values are
determined by skew-symmetry.
\end{prop}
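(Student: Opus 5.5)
We split \(T_0\) along the \(G\)-equivariant decomposition \(\Lambda^2V=\Lambda^2H_0\oplus(H_0\wedge e)\) and the decomposition \(V=H_0\oplus\mathbb Re\), and identify each component with a space of \(\operatorname{U}(n)\)-equivariant maps that Lemma~\ref{lem:unitary-facts} already computes. The Parity Lemma~\ref{lem:parity} gives directly that \(T_0(X,Y)\in\mathbb Re\) and \(T_0(X,e)\in H_0\) for \(X,Y\in H_0\). One could equally obtain this from Lemma~\ref{lem:unitary-facts}(c), which kills the component \(\Lambda^2H_0\to H_0\), and from Lemma~\ref{lem:unitary-facts}(b), which kills the component \(H_0\to\mathbb R\) of \(X\mapsto T_0(X,e)\). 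Both routes use the central element \(-\mathrm{Id}_{H_0}\).

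First, restricted to \(\Lambda^2H_0\), the map \((X,Y)\mapsto\) the \(e\)-coefficient of \(T_0(X,Y)\) is a \(\operatorname{U}(n)\)-invariant alternating bilinear form on \(H_0\), because \(G\) fixes \(e\). By Lemma~\ref{lem:unitary-facts}(b) it equals \(\tau_3\,\omega_0\) for some \(\tau_3\in\mathbb R\). Second, the map \(X\mapsto T_0(X,e)\) is a linear map \(H_0\to H_0\). Equivariance with \(g\) fixing \(e\) gives \(T_0(gX,e)=T_0(gX,ge)=gT_0(X,e)\), so this map lies in \(\operatorname{End}_{\operatorname{U}(n)}(H_0)\). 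By Lemma~\ref{lem:unitary-facts}(a) it equals \(\tau_1\mathrm{Id}+\tau_2J_0\).

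Finally, skew-symmetry gives \(T_0(e,e)=0\) and \(T_0(e,X)=-T_0(X,e)\), so \(T_0\) is entirely determined by these two pieces. For the converse, \(h_0\), \(J_0\), \(\omega_0\) and \(e\) are all \(G\)-invariant, so every choice of \(\tau_1,\tau_2,\tau_3\) defines an equivariant map; this is not needed for the statement as written, but it shows that all three parameters actually occur.

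None of the steps is a real obstacle. The only point requiring care is that the equivariance for the mixed term uses \(ge=e\), so that \(T_0(\cdot,e)\) is indeed a \(\operatorname{U}(n)\)-intertwiner of \(H_0\), and that this argument is valid for every \(n\geq1\). It is, since Lemma~\ref{lem:unitary-facts} holds for all \(n\geq1\).
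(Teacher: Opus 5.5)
Your proposal is correct and follows essentially the same route as the paper. Like the paper, you use the Parity Lemma to separate the components, Lemma~\ref{lem:unitary-facts}(b) to identify the $e$-component on $\Lambda^2H_0$ with $\tau_3\,\omega_0$, and Lemma~\ref{lem:unitary-facts}(a) to identify $X\mapsto T_0(X,e)$ with $\tau_1\mathrm{Id}+\tau_2J_0$; your remarks on the converse and on the alternative route via parts (b) and (c) of that lemma are accurate extras.
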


\begin{proof}
By Lemma~\ref{lem:parity}, \(T_0(\Lambda^2H_0)\subset\mathbb Re\) and
\(T_0(H_0\wedge e)\subset H_0\). The map sending \((X,Y)\) to the \(e\)-component of \(T_0(X,Y)\) is a \(\operatorname{U}(n)\)-invariant alternating
\(2\)-form on \(H_0\), hence a multiple of \(\omega_0\) by
Lemma~\ref{lem:unitary-facts}(b). The map \(X\mapsto T_0(X,e)\) is a
\(\operatorname{U}(n)\)-equivariant endomorphism of \(H_0\), hence of
the form \(\tau_1X+\tau_2J_0X\) by Lemma~\ref{lem:unitary-facts}(a).
\end{proof}

\begin{remark}
\label{rem:geometric-meaning-torsion-parameters}
Under the geometric identification above, the model space \(H_0\)
corresponds to \(H\), while \(e\) corresponds to \(\xi\). Thus, in
adapted frames,
\[
T(X,Y)=\tau_3\,\omega(X,Y)\xi,
\quad
T(X,\xi)=\tau_1X+\tau_2JX,
\quad
X,Y\in H.
\]
The parameter \(\tau_3\) measures the vertical component of the torsion
of two horizontal vectors, whereas \(\tau_1,\tau_2\) describe the mixed
torsion involving the distinguished direction. The relation between
\(\tau_3\), the inner torsion and \(d\eta\) is given in
Proposition~\ref{prop:deta-formula}.
\end{remark}

\subsubsection{Invariant inner torsion tensors}
\label{subsubsec:unitary-inner-torsion}

We next determine the possible \(G\)-equivariant candidates for the
inner torsion characteristic tensor
\(\mathfrak I_0:V\to\mathfrak{gl}(V)/\mathfrak g\). Choose a lift
\(\lambda:V\to\mathfrak{gl}(V)\) of \(\mathfrak I_0\). With respect to the
decomposition \(V=H_0\oplus\mathbb Re\), write
\[
\lambda(u)
=
\begin{pmatrix}
B(u)&p(u)\\
q(u)&\rho(u)
\end{pmatrix},
\quad
B(u)\in\mathfrak{gl}(H_0),
\quad
p(u)\in H_0,
\quad
q(u)\in H_0^*,
\quad
\rho(u)\in\mathbb R.
\]

By Lemma~\ref{lem:parity}, \(p(e)=0\), \(q(e)=0\), and, for
\(X\in H_0\), \(B(X)\in\mathfrak u(n)\) and \(\rho(X)=0\). After
subtracting from \(\lambda\) the \(\mathfrak g\)-valued linear map
\(X+se\mapsto B(X)\oplus0\), we may therefore assume \(B(X)=0\).

Since \(G\) is connected, the equivariance is equivalent to the
infinitesimal condition \eqref{eq:inf-inner-invariance}. Write
\(L=\operatorname{diag}(A,0)\) with \(A\in\mathfrak u(n)\).

For \(u=e\), since \(Le=0\), the condition reads
\([L,\lambda(e)]\in\mathfrak g\), that is, \([A,B(e)]\in\mathfrak u(n)\).
Thus the class of \(B(e)\) in \(\mathfrak{gl}(H_0)/\mathfrak u(n)\) is
fixed by \(\operatorname{U}(n)\), and by Lemma~\ref{lem:unitary-facts}(d),
after modifying the lift by a \(\mathfrak g\)-valued map, we may assume
\(B(e)=a\,\operatorname{Id}_{H_0}\). The vertical scalar component is
unrestricted, and we write \(\rho(e)=b\).

For \(u=X\in H_0\), the off-diagonal blocks of
\eqref{eq:inf-inner-invariance} give
\[
Ap(X)=p(AX),
\quad
q(AX)=-q(X)A,
\quad A\in\mathfrak u(n).
\]
Thus \(p:H_0\to H_0\) is a \(\operatorname{U}(n)\)-equivariant
endomorphism, and so is \(Q:H_0\to H_0\) defined by
\(q(X)=Q(X)^\flat=h_0(Q(X),\cdot)\), since \(A\) is skew-adjoint. By
Lemma~\ref{lem:unitary-facts}(a), there exist
\(\alpha,\beta,\gamma,\delta\in\mathbb R\) such that
\[
p(X)=\alpha X+\beta J_0X,
\quad
q(X)=(\gamma X+\delta J_0X)^\flat.
\]

We have proved the following normal form.

\begin{prop}
\label{prop:inner-torsion-Un}
Every \(G\)-equivariant map \(\mathfrak I_0:V\to\mathfrak{gl}(V)/\mathfrak g\)
admits a lift of the form
\begin{equation}
\label{eq:inner-torsion-lift}
\lambda(X+se)
=
\begin{pmatrix}
as\,\operatorname{Id}_{H_0}
&
\alpha X+\beta J_0X\\[1mm]
(\gamma X+\delta J_0X)^\flat
&
bs
\end{pmatrix},
\end{equation}
where \(X\in H_0\), \(s\in\mathbb R\), and
\(a,b,\alpha,\beta,\gamma,\delta\in\mathbb R\). Thus
\(\mathfrak I_0(X+se)=\lambda(X+se)+\mathfrak g\). Conversely, every map
of the form \eqref{eq:inner-torsion-lift} defines a \(G\)-equivariant
inner torsion characteristic tensor.
\end{prop}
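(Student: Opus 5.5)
The proof has two directions. The forward direction is essentially the reduction carried out just before the statement; I would organize it as follows and then check the converse by direct computation.

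\emph{Forward direction.} Start from an arbitrary lift \(\lambda\) of \(\mathfrak I_0\) and write it in the block form \((B,p,q,\rho)\).

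First apply Lemma~\ref{lem:parity} to get \(\lambda(X)\in\mathfrak{gl}(V)_-+\mathfrak g\) for \(X\in H_0\), and \(\lambda(e)\in\mathfrak{gl}(V)_+\). Adding a \(\mathfrak g\)-valued map to \(\lambda\) does not change \(\mathfrak I_0\). So subtract the linear map \(X+se\mapsto B(X)\oplus0\); this kills \(B\) on \(H_0\). The parity lemma also forces \(\rho(X)=0\), \(p(e)=0\) and \(q(e)=0\).

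Next use that \(G\cong\operatorname{U}(n)\) is connected, so equivariance is equivalent to \eqref{eq:inf-inner-invariance}.

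\begin{itemize}
\item For \(u=e\): the relation says the class of \(B(e)\) in \(\mathfrak{gl}(H_0)/\mathfrak u(n)\) is \(\operatorname{U}(n)\)-fixed. By Lemma~\ref{lem:unitary-facts}(d) it equals \(a\,\mathrm{Id}_{H_0}\) modulo \(\mathfrak u(n)\). A further \(\mathfrak g\)-valued correction, supported on \(\mathbb Re\), then gives \(B(e)=a\,\mathrm{Id}_{H_0}\).
\item For \(u=X\in H_0\): with \(L=\operatorname{diag}(A,0)\), the off-diagonal blocks of \([L,\lambda(X)]-\lambda(LX)\) must vanish exactly, because \(\mathfrak g\) is block-diagonal. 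This gives \(Ap(X)=p(AX)\) and \(-q(X)A=q(AX)\). Since \(A\) is skew-adjoint, the second identity says \(Q\) (with \(q=Q^\flat\)) is equivariant. Lemma~\ref{lem:unitary-facts}(a) then yields the four parameters \(\alpha,\beta,\gamma,\delta\).
\end{itemize}

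The diagonal blocks at \(u=X\) need no further check: they are \(0\) after the normalization.

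\emph{Converse.} Given \(\lambda\) of the form \eqref{eq:inner-torsion-lift}, I would verify \eqref{eq:inf-inner-invariance} directly. Then, since \(G\) is connected, the integrated condition \(\operatorname{Ad}_g\lambda(u)-\lambda(gu)\in\mathfrak g\) follows; alternatively one can check it with \(g\in\operatorname{U}(n)\) directly.

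The check splits into two cases. For \(u=e\), \([L,\lambda(e)]=0\), because \(\mathrm{Id}\) and the scalar \(b\) commute with \(L\). For \(u=X\), the off-diagonal blocks vanish: \(\alpha\mathrm{Id}+\beta J_0\) commutes with \(A\), and
\[
-h_0(\gamma X+\delta J_0X,A\,\cdot)=h_0(\gamma AX+\delta J_0AX,\cdot)
\]
by skew-adjointness and \(AJ_0=J_0A\). The diagonal blocks vanish as well.

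The only delicate point is bookkeeping in the forward direction. One must make sure the successive \(\mathfrak g\)-valued corrections are linear in \(u\) and do not disturb earlier normalizations. This holds because the first correction only affects \(u\in H_0\) and the second only \(u=e\).
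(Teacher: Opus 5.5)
Your proposal is correct and follows essentially the same route as the paper: the parity lemma (Lemma~\ref{lem:parity}) and a $\mathfrak g$-valued correction remove the horizontal diagonal part. The infinitesimal condition \eqref{eq:inf-inner-invariance} then gives, at $u=e$ via Lemma~\ref{lem:unitary-facts}(d), $B(e)=a\,\mathrm{Id}_{H_0}$, and at $u=X$ via Lemma~\ref{lem:unitary-facts}(a), the parameters $\alpha,\beta,\gamma,\delta$. Your direct verification of the converse, which the paper leaves implicit, is also correct.
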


\begin{remark}
\label{rem:inner-torsion-parameters}
The six parameters \(a,\ b,\ \alpha,\ \beta,\ \gamma,\ \delta\) are
uniquely determined by \(\mathfrak I_0\), since
\(\operatorname{Id}_{H_0}\notin\mathfrak u(n)\) and \(\mathfrak g\) has
no off-diagonal or vertical components. The parameters \(a,b\) arise
from the value of \(\mathfrak I_0\) on the distinguished direction \(e\),
while \(\alpha,\beta,\gamma,\delta\) describe the two
horizontal--vertical off-diagonal components. In particular,
\(\mathfrak I_0=0\) if and only if
\(a=b=\alpha=\beta=\gamma=\delta=0\).
\end{remark}

\subsubsection{Invariant curvature tensors}
\label{subsubsec:unitary-curvature}

We now determine the possible \(G\)-equivariant candidates for the
curvature characteristic tensor \(R_0:\Lambda^2V\to\mathfrak{gl}(V)\).
Write
\[
R_0(u,v)
=
\begin{pmatrix}
B(u,v)&p(u,v)\\
q(u,v)&r(u,v)
\end{pmatrix}.
\]
By Lemma~\ref{lem:parity}, \(R_0(X,e)\in\mathfrak{gl}(V)_-\) and
\(R_0(X,Y)\in\mathfrak{gl}(V)_+\) for \(X,Y\in H_0\), that is,
\[
B(X,e)=0,\quad r(X,e)=0,
\quad
p(X,Y)=0,\quad q(X,Y)=0.
\]
Since \(G\) is connected, equivariance is equivalent to
\eqref{eq:inf-curvature-invariance}. With
\(L=\operatorname{diag}(A,0)\), \(A\in\mathfrak u(n)\), and using
\(Le=0\), the remaining components satisfy
\[
p(AX,e)=Ap(X,e),
\quad
q(AX,e)=-q(X,e)A,
\]
\[
r(AX,Y)+r(X,AY)=0,
\quad
[A,B(X,Y)]-B(AX,Y)-B(X,AY)=0.
\]
By Lemma~\ref{lem:unitary-facts}(a), the first two relations give
constants \(c_1,c_2,c_3,c_4\in\mathbb R\) such that
\begin{equation}
\label{eq:curvature-mixed}
R_0(X,e)
=
\begin{pmatrix}
0&c_1X+c_2J_0X\\[1mm]
(c_3X+c_4J_0X)^\flat&0
\end{pmatrix}.
\end{equation}
By Lemma~\ref{lem:unitary-facts}(b), \(r(X,Y)=c_0\,\omega_0(X,Y)\) for
some \(c_0\in\mathbb R\). Finally,
\(B:\Lambda^2H_0\to\mathfrak{gl}(H_0)\) is
\(\operatorname{U}(n)\)-equivariant. Thus
\begin{equation}
\label{eq:curvature-horizontal}
R_0(X,Y)
=
\begin{pmatrix}
B(X,Y)&0\\
0&c_0\,\omega_0(X,Y)
\end{pmatrix}.
\end{equation}

We summarize this discussion as follows.

\begin{prop}
\label{prop:curvature-candidates-Un}
Let \(R_0:\Lambda^2V\to\mathfrak{gl}(V)\) be a \(G\)-equivariant
candidate for the curvature characteristic tensor. Then \(R_0\) is
determined by
\[
B\in\operatorname{Hom}_{\operatorname{U}(n)}\bigl(\Lambda^2H_0,\mathfrak{gl}(H_0)\bigr)
\]
and by constants \(c_0,c_1,c_2,c_3,c_4\in\mathbb R\) through
\eqref{eq:curvature-mixed} and \eqref{eq:curvature-horizontal}.
Conversely, every tensor defined by these expressions is
\(G\)-equivariant.
\end{prop}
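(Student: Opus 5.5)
The proof is essentially a summary of the computation that precedes the statement, so the plan is to organize that computation into two directions.

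For the forward direction, let \(R_0\) be \(G\)-equivariant and write it in block form with respect to \(V=H_0\oplus\mathbb Re\).
\begin{enumerate}
\item Apply Lemma~\ref{lem:parity} with the central element \(\varepsilon=-\mathrm{Id}_{H_0}\oplus1\). This kills the diagonal blocks of \(R_0(X,e)\) and the off-diagonal blocks of \(R_0(X,Y)\) for \(X,Y\in H_0\). Since \(\Lambda^2V=\Lambda^2H_0\oplus(H_0\wedge e)\), bilinearity and skew-symmetry show that \(R_0\) is determined by these two families of values.
\item Use connectedness of \(G\cong\operatorname{U}(n)\) to replace equivariance by \eqref{eq:inf-curvature-invariance} with \(L=\operatorname{diag}(A,0)\), \(A\in\mathfrak u(n)\). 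Because \(Le=0\), the four surviving blocks decouple into four separate invariance conditions.
\item For \(X\mapsto p(X,e)\), the condition reads \(p(AX,e)=Ap(X,e)\), so this map is a \(\operatorname{U}(n)\)-equivariant endomorphism of \(H_0\).
\item For \(X\mapsto q(X,e)\), the condition is \(q(AX,e)=-q(X,e)A\). Transport it to \(H_0\) via \(h_0\) as \(q(X,e)=Q(X)^\flat\). Skew-adjointness of \(A\) then shows that \(Q\) is equivariant.
\item Lemma~\ref{lem:unitary-facts}(a) applied to the maps in steps 3 and 4 gives \(c_1,\dots,c_4\), which is \eqref{eq:curvature-mixed}.
\item The scalar block \(r(X,Y)\) is an invariant alternating form on \(H_0\), so Lemma~\ref{lem:unitary-facts}(b) gives \(c_0\,\omega_0\).
\item The remaining block \(B\) is, by the last relation, exactly an element of \(\operatorname{Hom}_{\operatorname{U}(n)}(\Lambda^2H_0,\mathfrak{gl}(H_0))\), which is \eqref{eq:curvature-horizontal}.
\end{enumerate}

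For the converse, the cleanest route is the finite-group-element check rather than the Lie algebra, since it avoids any connectedness argument. Take \(g=\operatorname{diag}(A,1)\) with \(A\in\operatorname{U}(n)\) and verify \(R_0(gu,gv)=\operatorname{Ad}_g R_0(u,v)\) blockwise.
\begin{enumerate}
\item Conjugating the matrix in \eqref{eq:curvature-mixed} gives the upper-right block \(A(c_1X+c_2J_0X)\) and the lower-left block \((c_3X+c_4J_0X)^\flat A^{-1}\).
\item Since \(A\) commutes with \(J_0\) and is orthogonal, these equal \(c_1AX+c_2J_0AX\) and \((c_3AX+c_4J_0AX)^\flat\), which is \(R_0(AX,e)\).
\item The horizontal value \eqref{eq:curvature-horizontal} transforms correctly because \(B\) is assumed equivariant and \(\omega_0\) is \(\operatorname{U}(n)\)-invariant.
\item Skew-symmetric extension preserves equivariance, since the decomposition \(\Lambda^2V=\Lambda^2H_0\oplus(H_0\wedge e)\) is \(G\)-stable.
\end{enumerate}

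The only point requiring care is step 4 of the forward direction, the sign and transpose bookkeeping for the row-vector block \(q\). One must check that \(\operatorname{Ad}_g\) acts on \(q\) by \(q\mapsto qA^{-1}\), and that \((AZ)^\flat=Z^\flat A^{-1}\) for orthogonal \(A\). Both hold because \(\operatorname{U}(n)\subset\operatorname{O}(2n)\). Everything else is a direct application of the earlier lemmas, so I do not expect any genuine obstacle.
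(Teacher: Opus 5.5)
Your proposal is correct and follows essentially the same route as the paper: the parity lemma kills the wrong blocks, the infinitesimal condition with \(L=\operatorname{diag}(A,0)\) decouples the remaining blocks, and Lemma~\ref{lem:unitary-facts}(a),(b) then produce \(c_1,\dots,c_4\) and \(c_0\), leaving \(B\) as an arbitrary \(\operatorname{U}(n)\)-equivariant map. The only difference is in the converse: you check it directly with group elements \(g=\operatorname{diag}(A,1)\), whereas the paper leaves it implicit in the equivalence, for connected \(G\), between equivariance and the infinitesimal conditions. This is a harmless variation.
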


\begin{remark}
\label{rem:horizontal-curvature-block}
At this stage the horizontal block \(B\) is constrained only by
\(\operatorname{U}(n)\)-equivariance. The algebraic compatibility
relation and the Bianchi identities will impose the additional
conditions required for \(R_0\) to occur as a characteristic tensor of
an infinitesimally homogeneous structure.
\end{remark}

\subsection{Classification of the torsion-free stratum}
\label{subsec:unitary-torsion-free}

\subsubsection{Compatibility relation and Bianchi identities}
\label{subsubsec:unitary-torsion-free-relations}

Assume throughout this subsection that \(T_0=0\). Let
\(\lambda:V\to\mathfrak{gl}(V)\) be a lift of \(\mathfrak I_0\) in the
normal form \eqref{eq:inner-torsion-lift}. For \(X\in H_0\), write
\[
P(X)=\alpha X+\beta J_0X,
\quad
Q(X)=\gamma X+\delta J_0X.
\]
Then
\[
\lambda(X)
=
\begin{pmatrix}
0&P(X)\\
Q(X)^\flat&0
\end{pmatrix},
\quad
\lambda(e)
=
\begin{pmatrix}
a\,\operatorname{Id}_{H_0}&0\\
0&b
\end{pmatrix}.
\]

By Theorem~\ref{thm:algebraic-characterization}, the invariant tensors
\(R_0,\mathfrak I_0\) occur as characteristic tensors precisely when
they satisfy the compatibility relation
\eqref{eq:torsion-free-compatibility} together with the first and
second Bianchi identities.

Applying \eqref{eq:torsion-free-compatibility} to the mixed pair
\((X,e)\) yields
\begin{equation}
\label{eq:mixed-curvature-coefficients-torsion-free}
\begin{aligned}
c_1&=-\alpha^2+b\alpha+\beta^2,
&
c_2&=\beta(b-2\alpha),\\
c_3&=(2a-\alpha-b)\gamma+\beta\delta,
&
c_4&=(2a-\alpha-b)\delta-\beta\gamma.
\end{aligned}
\end{equation}

For the horizontal pair \((X,Y)\), one obtains
\begin{equation}
\label{eq:c0-torsion-free}
c_0=2(\alpha\delta-\beta\gamma-b\delta),
\end{equation}
and
\begin{equation}
\label{eq:B-congruence-torsion-free}
B(X,Y)-\bigl(P(X)\otimes Q(Y)^\flat-P(Y)\otimes Q(X)^\flat\bigr)
+2a\delta\,\omega_0(X,Y)\operatorname{Id}_{H_0}
\in\mathfrak u(n).
\end{equation}

The first Bianchi identity \eqref{eq:first-bianchi}, applied to three
horizontal vectors, gives
\begin{equation}
\label{eq:B-first-bianchi}
\mathfrak S_{X,Y,Z}B(X,Y)Z=0.
\end{equation}

For the triple \((X,Y,e)\),
\[
R_0(X,Y)e=c_0\,\omega_0(X,Y)e,
\quad
R_0(Y,e)X=\bigl(c_3h_0(Y,X)+c_4h_0(J_0Y,X)\bigr)e,
\]
and
\[
R_0(e,X)Y=-\bigl(c_3h_0(X,Y)+c_4h_0(J_0X,Y)\bigr)e.
\]
The terms involving \(c_3\) cancel by the symmetry of \(h_0\). Moreover,
\[
h_0(J_0Y,X)=-\omega_0(X,Y),
\quad
h_0(J_0X,Y)=\omega_0(X,Y).
\]
Therefore the cyclic sum gives \((c_0-2c_4)\,\omega_0(X,Y)\,e=0\). Hence
\begin{equation}
\label{eq:c0-equals-2c4}
c_0=2c_4.
\end{equation}

Combining \eqref{eq:mixed-curvature-coefficients-torsion-free},
\eqref{eq:c0-torsion-free} and \eqref{eq:c0-equals-2c4}, we obtain the
fundamental relation
\begin{equation}
\label{eq:delta-alpha-a}
\delta(\alpha-a)=0.
\end{equation}

We shall impose the second Bianchi identity \eqref{eq:second-bianchi}
separately on the branches determined by \eqref{eq:delta-alpha-a}.

\subsubsection{The compatible torsion-free family}
\label{subsubsec:compatible-torsion-free}

Assume \(T_0=0\) and \(\mathfrak I_0=0\). Then we may take the lift
\(\lambda=0\). The compatibility relation
\eqref{eq:torsion-free-compatibility} reduces to
\(R_0(u,v)\in\mathfrak u(n)\oplus\{0\}\). Consequently,
\[
B(X,Y)\in\mathfrak u(n),
\quad
c_0=0,
\quad
R_0(X,e)=0, \quad X,Y\in H_0.
\]

Define \(\mathcal R(X,Y,Z,W)=h_0(B(X,Y)Z,W)\). Since \(B\) is
\(\operatorname{U}(n)\)-equivariant, \(\mathcal R\) is
\(\operatorname{U}(n)\)-invariant. Moreover,
\(B(X,Y)\in\mathfrak u(n)\), so \(B(X,Y)\) is skew-adjoint and commutes
with \(J_0\). Hence \(\mathcal R\) has the usual algebraic curvature
symmetries and satisfies
\[
\mathcal R(X,Y,J_0Z,J_0W)
=h_0(B(X,Y)J_0Z,J_0W)
=h_0(J_0B(X,Y)Z,J_0W)
=\mathcal R(X,Y,Z,W).
\]
Thus \(\mathcal R\) is a \(\operatorname{U}(n)\)-invariant K\"ahler
algebraic curvature tensor. By Lemma~\ref{lem:unitary-kahler-curvature},
there exists \(\kappa\in\mathbb R\) such that \(\mathcal R=\kappa K\).
Equivalently,
\[
R_0(X,e)=0,
\quad
R_0(X,Y)Z=\kappa K_{X,Y}Z,
\quad
R_0(X,Y)e=0.
\]
Since \(\lambda=0\), the second Bianchi identity is automatically
satisfied.

\begin{teo}
\label{thm:compatible-torsion-free}
Let \(G=\operatorname{U}(n)\times\{1\}\), and let \((M,\nabla,P)\) be an
infinitesimally homogeneous affine manifold with \(G\)-structure
satisfying \(T_0=0\) and \(\mathfrak I_0=0\). Then \(\nabla\) is the
Levi-Civita connection of the metric \(g=h+\eta\otimes\eta\) determined
by the \(G\)-structure. The distributions \(H\) and \(\mathbb R\xi\) are
parallel, and locally \(M\) is a Riemannian product
\(M\simeq N^{2n}\times I\), where \(I\) is a one-dimensional flat factor
and \(N\) is a K\"ahler manifold of constant holomorphic sectional
curvature \(\kappa\). Equivalently,
\[
R_0(X,e)=0,
\quad
h_0(R_0(X,Y)Z,W)=\kappa K(X,Y,Z,W).
\]

Conversely, any local product of a complex space form with a
one-dimensional flat factor, endowed with the induced
\(\operatorname{U}(n)\times\{1\}\)-structure and its Levi-Civita
connection, is infinitesimally homogeneous with \(T_0=0\) and
\(\mathfrak I_0=0\).
\end{teo}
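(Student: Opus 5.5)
The plan is to convert the algebraic information already obtained in the compatible torsion-free family into geometric statements, and then invoke standard local structure results.

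First, since \(\mathfrak I_0=0\), the connection \(\nabla\) is compatible with \(P\). Hence \(\nabla\) preserves every tensor that has constant components in the frames of \(P\). In particular it preserves the splitting \(TM=H\oplus\mathbb R\xi\), the vector field \(\xi\) (so \(\nabla\xi=0\)), the form \(\eta\), the metric \(h\) on \(H\), and \(J\). Consequently \(g=h+\eta\otimes\eta\) and \(\phi\) are \(\nabla\)-parallel. Because \(T_0=0\), the connection \(\nabla\) is a torsion-free metric connection for \(g\), so by uniqueness it is the Levi-Civita connection of \(g\). Parallelism of \(H\) and \(\mathbb R\xi\) follows from parallelism of the projections \(\eta\otimes\xi\) and \(\mathrm{Id}-\eta\otimes\xi\). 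Both distributions are therefore integrable: for \(X,Y\in H\) we have \([X,Y]=\nabla_XY-\nabla_YX\in H\), and similarly for the line field.

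Next, the de Rham decomposition theorem, in its local form for complementary parallel orthogonal distributions, gives a local Riemannian product \(M\simeq N^{2n}\times I\). The factor \(I\) is one-dimensional and hence flat. The integral leaves \(N\) of \(H\) are totally geodesic, carry the metric \(h\), and carry the \(\nabla\)-parallel almost complex structure \(J\). Restricting the Levi-Civita connection to \(N\) gives the Levi-Civita connection of \(N\), and \(J\) is parallel for it, so \(N\) is Kähler. Its curvature is the restriction of \(R\) to \(H\), which in unitary frames equals \(B(X,Y)=\kappa K_{X,Y}\) by the discussion preceding the theorem (Lemma~\ref{lem:unitary-kahler-curvature}). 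Using the normalization \eqref{eq:kahler-normalization}, the holomorphic sectional curvature of \(N\) is constantly \(\kappa\). The identities \(R_0(X,e)=0\) and \(h_0(R_0(X,Y)Z,W)=\kappa K(X,Y,Z,W)\) were already derived algebraically above.

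For the converse, take a local product \(N\times I\) with \(N\) a complex space form of holomorphic curvature \(\kappa\). Set \(\xi=\partial_t\), let \(H=TN\), and let \(J\) be the complex structure of \(N\). The Levi-Civita connection of the product is torsion-free. It preserves \(H\), \(\xi\), \(h\) and \(J\), so it is compatible with \(P\) and the inner torsion vanishes. Its curvature is \(R(X,Y)=\kappa K_{X,Y}\) on \(H\) and vanishes whenever \(\xi\) is an argument. Since \(K\) is built from \(h_0,\omega_0,J_0\), its components are the same in every unitary adapted frame. Thus \(T\), \(R\) and \(\mathfrak I^P\) have constant representatives, which is exactly infinitesimal homogeneity with \(T_0=0\) and \(\mathfrak I_0=0\). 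Alternatively, one can cite Theorem~\ref{thm:algebraic-characterization} together with the computations above.

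The only step requiring care is the local product decomposition. Here I would rely on the local de Rham theorem, or verify it directly: take coordinates adapted to the two integrable parallel distributions and check that the metric splits, since \(\xi\) is parallel and hence Killing with integrable orthogonal complement. Everything else is formal.
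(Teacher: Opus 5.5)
Your proposal is correct and follows essentially the same route as the paper. In both, compatibility together with $T=0$ identifies $\nabla$ as the Levi-Civita connection of $g$ with $H$ and $\mathbb R\xi$ parallel. The local de Rham splitting then gives $N\times I$ with $N$ K\"ahler, and the algebraic identity $B=\kappa K$ with the normalization \eqref{eq:kahler-normalization} gives constant holomorphic sectional curvature $\kappa$. The converse is the same direct check of constant components in unitary frames. Your remarks on totally geodesic leaves and on $\xi$ being Killing only make explicit details the paper leaves implicit.
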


\begin{proof}
Since \(\mathfrak I_0=0\), the connection \(\nabla\) is compatible with
the \(\operatorname{U}(n)\times\{1\}\)-structure. Hence
\(\nabla g=0\), \(\nabla\xi=0\), \(\nabla H\subset H\) and \(\nabla J=0\).
Since \(T=0\), the connection is the Levi-Civita connection of \(g\). The
distributions \(H\) and \(\mathbb R\xi\) are orthogonal and parallel.
Hence they are integrable and the metric splits locally as a Riemannian
product \(M\simeq N^{2n}\times I\). The restriction of \(J\) to \(N\) is
parallel with respect to the Levi-Civita connection of \(N\), and
therefore \(N\) is K\"ahler. The curvature computation above gives
\(R_0(X,e)=0\) and \(h_0(R_0(X,Y)Z,W)=\kappa K(X,Y,Z,W)\), so, by
\eqref{eq:kahler-normalization}, \(N\) has constant holomorphic
sectional curvature \(\kappa\).

Conversely, for a local product of a complex space form and a flat
one-dimensional factor, the Levi-Civita connection preserves the
induced \(G\)-structure. Its torsion and inner torsion vanish, while its
curvature has the constant representative described above. Hence the
resulting triple is infinitesimally homogeneous.
\end{proof}

\subsubsection{The non-compatible branch \(a=\alpha\)}
\label{subsubsec:branch-a-alpha}

We now assume \(a=\alpha\). Then
\(P(X)=aX+\beta J_0X\), \(Q(X)=\gamma X+\delta J_0X\), and the mixed
curvature coefficients become
\begin{equation}
\label{eq:c-branch-i}
\begin{aligned}
c_1&=a(b-a)+\beta^2,
&
c_2&=\beta(b-2a),\\
c_3&=(a-b)\gamma+\beta\delta,
&
c_4&=(a-b)\delta-\beta\gamma,
\end{aligned}
\quad
c_0=2c_4.
\end{equation}

Thus the first Bianchi identity for the triple \((X,Y,e)\) is
automatically satisfied. It remains to impose its horizontal part. From
\eqref{eq:B-congruence-torsion-free}, write \(B=B_\lambda+U\), with
\(U(X,Y)\in\mathfrak u(n)\), where
\[
B_\lambda(X,Y)
=P(X)\otimes Q(Y)^\flat-P(Y)\otimes Q(X)^\flat
-2a\delta\,\omega_0(X,Y)\operatorname{Id}_{H_0}.
\]
Explicitly,
\[
\begin{aligned}
B_\lambda(X,Y)Z
={}&
a\gamma\bigl(h_0(Y,Z)X-h_0(X,Z)Y\bigr)
+a\delta\bigl(\omega_0(Y,Z)X-\omega_0(X,Z)Y\bigr)\\
&+\beta\gamma\bigl(h_0(Y,Z)J_0X-h_0(X,Z)J_0Y\bigr)
+\beta\delta\bigl(\omega_0(Y,Z)J_0X-\omega_0(X,Z)J_0Y\bigr)\\
&-2a\delta\,\omega_0(X,Y)Z.
\end{aligned}
\]

Taking the cyclic sum in \(X,Y,Z\), the terms involving \(a\gamma\) and
\(\beta\gamma\) cancel by symmetry of \(h_0\), while the terms involving
\(a\delta\) cancel with the last term. Hence
\[
\mathfrak S_{X,Y,Z}B_\lambda(X,Y)Z
=2\beta\delta\,\mathfrak S_{X,Y,Z}\omega_0(Y,Z)J_0X.
\]
Therefore, the first Bianchi identity for \(B\) is equivalent to
\[
\mathfrak S_{X,Y,Z}U(X,Y)Z
=-2\beta\delta\,\mathfrak S_{X,Y,Z}\omega_0(Y,Z)J_0X.
\]
Define \(\widetilde U(X,Y)=U(X,Y)+2\beta\delta\,\omega_0(X,Y)J_0\). Then
\(\mathfrak S_{X,Y,Z}\widetilde U(X,Y)Z=0\).

The tensor \(\widetilde U\) is \(\mathfrak u(n)\)-valued and
\(\operatorname{U}(n)\)-equivariant. Hence its associated quadrilinear tensor
\[
\widetilde{\mathcal R}(X,Y,Z,W)=h_0(\widetilde U(X,Y)Z,W)
\]
is a \(\operatorname{U}(n)\)-invariant K\"ahler algebraic curvature
tensor. By Lemma~\ref{lem:unitary-kahler-curvature}, there exists
\(\kappa\in\mathbb R\) such that
\(\widetilde U(X,Y)=\kappa\, K_{X,Y}\). Hence
\begin{equation}
\label{eq:B-branch-i}
B(X,Y)
=B_\lambda(X,Y)-2\beta\delta\,\omega_0(X,Y)J_0+\kappa K_{X,Y}.
\end{equation}

Substitution into the second Bianchi identity shows that no further
restriction is imposed in this branch. Thus \(\kappa\) remains free.

\subsubsection{The complementary branch \(a\neq\alpha\)}
\label{subsubsec:branch-a-not-alpha}

Assume now \(a\neq\alpha\). By \eqref{eq:delta-alpha-a}, \(\delta=0\).
Hence \(P(X)=\alpha X+\beta J_0X\) and \(Q(X)=\gamma X\). The mixed
curvature coefficients become
\begin{equation}
\label{eq:c-branch-ii}
\begin{aligned}
c_1&=-\alpha^2+b\alpha+\beta^2,
&
c_2&=\beta(b-2\alpha),\\
c_3&=(2a-\alpha-b)\gamma,
&
c_4&=-\beta\gamma,
\end{aligned}
\quad
c_0=-2\beta\gamma=2c_4.
\end{equation}

The congruence \eqref{eq:B-congruence-torsion-free} becomes
\[
B(X,Y)-\gamma\bigl(P(X)\otimes Y^\flat-P(Y)\otimes X^\flat\bigr)
\in\mathfrak u(n).
\]
Proceeding as in the previous branch, we write \(B=B_\lambda+U\), with
\(U(X,Y)\in\mathfrak u(n)\), where
\[
B_\lambda(X,Y)=\gamma\bigl(P(X)\otimes Y^\flat-P(Y)\otimes X^\flat\bigr).
\]
Since \(\mathfrak S_{X,Y,Z}B_\lambda(X,Y)Z=0\), the horizontal first
Bianchi identity implies \(\mathfrak S_{X,Y,Z}U(X,Y)Z=0\). As in the
preceding case, the associated tensor
\(\mathcal U(X,Y,Z,W)=h_0(U(X,Y)Z,W)\) is a
\(\operatorname{U}(n)\)-invariant K\"ahler algebraic curvature tensor.
Hence, by Lemma~\ref{lem:unitary-kahler-curvature},
\(U(X,Y)=\kappa K_{X,Y}\) for some \(\kappa\in\mathbb R\). Substitution
into the second Bianchi identity gives \(\kappa(a-\alpha)=0\). Since
\(a\neq\alpha\), we obtain \(\kappa=0\). Therefore \(U=0\), and
\begin{equation}
\label{eq:B-branch-ii}
B(X,Y)=\gamma\bigl(P(X)\otimes Y^\flat-P(Y)\otimes X^\flat\bigr).
\end{equation}

\begin{teo}[Classification of the torsion-free non-compatible case]
\label{thm:torsion-free-non-compatible}
Let \(G=\operatorname{U}(n)\times\{1\}\), let
\(\mathfrak I_0\neq0\) be a \(G\)-equivariant inner torsion with normal
form parameters \((a,b,\alpha,\beta,\gamma,\delta)\), and let \(R_0\) be
a \(G\)-equivariant curvature candidate as in
Proposition~\ref{prop:curvature-candidates-Un}. Then
\((R_0,\mathfrak I_0)\) are the characteristic tensors of a torsion-free
infinitesimally homogeneous affine manifold with \(G\)-structure if and
only if \(\delta(\alpha-a)=0\) and exactly one of the following holds:
\begin{enumerate}
\item[(i)] \(\alpha=a\); the coefficients \(c_0,\dots,c_4\) are given
by \eqref{eq:c-branch-i}, and there exists \(\kappa\in\mathbb R\) such
that \(B\) is given by \eqref{eq:B-branch-i}. Here
\((a,b,\beta,\gamma,\delta,\kappa)\) are arbitrary, subject only to
\((a,b,\beta,\gamma,\delta)\neq0\);
\item[(ii)] \(\alpha\neq a\); then \(\delta=0\), the coefficients
\(c_0,\dots,c_4\) are given by \eqref{eq:c-branch-ii}, and \(B\) is given
by \eqref{eq:B-branch-ii}. Here \((a,b,\alpha,\beta,\gamma)\) are
arbitrary with \(a\neq\alpha\).
\end{enumerate}
\end{teo}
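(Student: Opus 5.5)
The proof assembles the computations of Subsections~\ref{subsubsec:unitary-torsion-free-relations}--\ref{subsubsec:branch-a-not-alpha} into an equivalence, with Theorem~\ref{thm:algebraic-characterization} as the bridge. By that theorem, and using the normal-form lift \eqref{eq:inner-torsion-lift} of Proposition~\ref{prop:inner-torsion-Un}, the pair \((R_0,\mathfrak I_0)\) is realized if and only if three conditions hold: the compatibility relation \eqref{eq:torsion-free-compatibility}, the first Bianchi identity \eqref{eq:first-bianchi}, and the second Bianchi identity \eqref{eq:second-bianchi}. I will therefore decompose each condition according to the parity splitting of Lemma~\ref{lem:parity} and the block form of Proposition~\ref{prop:curvature-candidates-Un}.

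\emph{Necessity.} First, \eqref{eq:torsion-free-compatibility} on the pair \((X,e)\) gives \eqref{eq:mixed-curvature-coefficients-torsion-free}. On the pair \((X,Y)\) it gives \eqref{eq:c0-torsion-free} and the congruence \eqref{eq:B-congruence-torsion-free}. The first Bianchi identity on \((X,Y,e)\) gives \(c_0=2c_4\), and hence \(\delta(\alpha-a)=0\). The other parity types of triples (for example \((X,e,e)\)) are trivial by skew-symmetry. This splits into the two cases.

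If \(\alpha=a\), write \(B=B_\lambda+U\) and reduce the horizontal Bianchi identity to \(\widetilde U\). Lemma~\ref{lem:unitary-kahler-curvature} then gives \eqref{eq:B-branch-i}. Its hypothesis is met because \(\widetilde U\) is \(\mathfrak u(n)\)-valued and equivariant, so \(h_0(\widetilde U\cdot,\cdot)\) has the curvature symmetries and is \(J_0\)-invariant in the last pair. The condition \((a,b,\beta,\gamma,\delta)\neq0\) is exactly \(\mathfrak I_0\neq0\) by Remark~\ref{rem:inner-torsion-parameters}.

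If \(\alpha\neq a\), then \(\delta=0\), and the same argument gives \(U=\kappa K\). The second Bianchi identity must then force \(\kappa(a-\alpha)=0\), hence \(\kappa=0\).

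\emph{Sufficiency.} Conversely, in each case define \(R_0\) by the stated formulas. I then check that all three conditions hold. Compatibility holds by construction, since the curvature coefficients were solved from it. The first Bianchi identity holds by the cyclic-sum computations already done.

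\emph{Main obstacle.} The substantive step is the second Bianchi identity \eqref{eq:second-bianchi} in both branches. I would organize it by the parity of \((u,v,w)\); by Lemma~\ref{lem:parity}, only the types \((X,Y,Z)\) and \((X,Y,e)\) can contribute nontrivially.
\begin{enumerate}
\item[(1)] For \((X,Y,e)\), the operators \(\lambda(e)\) and \(R_0(X,Y)\) are block-diagonal, and the mixed terms involve \(\lambda(X)\) and \(R_0(Y,e)\). Expanding the cyclic sum, the result will be a combination of the tensors \(h_0(Y,\cdot)X\), \(\omega_0\)-terms, \(J_0\)-terms and \(K_{X,Y}\). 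The coefficients come out as polynomials in the parameters.
\item[(2)] In branch (ii), I expect the only surviving coefficient to be \(\kappa(a-\alpha)\), times \(K_{X,Y}\). It should arise from \([\lambda(e),\kappa K_{X,Y}]\), which vanishes, and from \(R_0(\lambda(e)X,Y)=a\,R_0(X,Y)\); these give \(2a\kappa K\). This must then be compared with contributions \(\alpha\kappa K\) coming through \(\lambda(X)e=P(X)\).
\item[(3)] In branch (i), the same computation gives \(\kappa(a-\alpha)=0\) automatically. The remaining polynomial identities in \(a,\beta,\gamma,\delta,b\) must cancel identically. I would verify this using \(c_0=2c_4\), the explicit \(c_i\), and the identity \(J_0K_{X,Y}=K_{X,Y}J_0\).
\item[(4)] The type \((X,Y,Z)\) takes values in \(\mathfrak{gl}(V)_-\). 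Only the \(\lambda(X)\)-off-diagonal terms against \(B\) and \(c_0\) enter. There I expect cancellation from the horizontal first Bianchi identity combined with the relations \(c_3,c_4\) versus \(P,Q\).
\end{enumerate}
I would do this bookkeeping first on unit vectors, choosing \(Y=J_0X\) or \(Y\perp X,J_0X\) to isolate coefficients, and then use invariance to extend to general vectors.
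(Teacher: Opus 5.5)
Your proposal is correct and follows the paper's own route: Theorem~\ref{thm:algebraic-characterization} with the normal-form lift, then the compatibility relation and first Bianchi identity reducing to $c_0=2c_4$ and $\delta(\alpha-a)=0$, then Lemma~\ref{lem:unitary-kahler-curvature} in each branch, and finally the second Bianchi identity reducing to $\kappa(a-\alpha)=0$ (the paper, like you, only asserts this last substitution, and your predicted outcome is right).

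A few small bookkeeping corrections. Triples with a repeated $e$ vanish because the cyclic sum in \eqref{eq:second-bianchi} is alternating, not because of parity. In the $(X,Y,Z)$ component, the terms $R_0(\lambda(X)Y,Z)$ and $R_0(Y,\lambda(X)Z)$ also bring in $c_1,c_2$. In addition, the $\beta J_0$ part of $P$ produces $\mathfrak S_{X,Y,Z}B(Y,Z)J_0X$, which the first Bianchi identity alone does not control; you handle it using $U(Y,Z)J_0=J_0U(Y,Z)$ together with $\delta(\alpha-a)=0$.
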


\begin{proof}
Necessity was established above. Conversely, the data in either branch
satisfy the compatibility relation and the two Bianchi identities, and
therefore, by Theorem~\ref{thm:algebraic-characterization}, they are the
characteristic tensors of a torsion-free infinitesimally homogeneous
affine manifold.
\end{proof}

\begin{remark}
\label{rem:compatible-inside-branch-i}
Setting \(a=b=\beta=\gamma=\delta=0\) in branch (i) gives
\(c_0=\dots=c_4=0\) and \(B=\kappa K\), which is precisely the compatible
family of Theorem~\ref{thm:compatible-torsion-free}. Thus the formulas
of branch (i) also cover the compatible case.
\end{remark}

%%%%%%%%%%%%%%%%%%%%%%%%%
%%%%%%%%%%%%%%%%%%%%%%%%%

\subsection{Invariant torsion deformations}
\label{subsec:unitary-torsion-deformations}

We now pass from the torsion-free classification to the general case.
As explained in Subsection~\ref{subsec:deformations}, every
infinitesimally homogeneous connection is obtained from its torsion-free
symmetrization by adding one half of the tensor induced by a
\(G\)-invariant element of \(\operatorname{Hom}_G(\Lambda^2V,V)\).
Proposition~\ref{prop:invariant-torsion-Un} shows that every such tensor
is uniquely determined by three parameters
\(\tau_1,\tau_2,\tau_3\in\mathbb R\) and has the form
\begin{equation}
\label{eq:general-torsion-deformation-Un}
t_0(X,Y)=\tau_3\,\omega_0(X,Y)e,
\quad
t_0(X,e)=\tau_1X+\tau_2J_0X,
\quad
X,Y\in H_0.
\end{equation}

Let \((M,\nabla^s,P)\) be any torsion-free infinitesimally homogeneous
manifold with \(G\)-structure, and let \(t\) be the tensor induced by
\(t_0\). Then
\(
\nabla=\nabla^s+\frac12t
\) defines a connection on \(M\) such that \((M,\nabla, P)\) is
infinitesimally homogeneous, with torsion characteristic tensor
\(T_0=t_0\). We now describe its inner torsion and curvature
characteristic tensors using
Proposition~\ref{prop:general-invariant-deformation} with \(A_0=t_0\).

\subsubsection{Transformation of the inner torsion}
\label{subsubsec:unitary-inner-deformation}

Let the inner torsion of the torsion-free base be represented by
\[
\lambda^s(X+se)
=
\begin{pmatrix}
a^ss\,\operatorname{Id}_{H_0}
&
\alpha^sX+\beta^sJ_0X\\[1mm]
(\gamma^sX+\delta^sJ_0X)^\flat
&
b^ss
\end{pmatrix}.
\]
The endomorphisms \(A_{0,X}\) and \(A_{0,e}\) are
\[
A_{0,X}
=
\begin{pmatrix}
0&\tau_1X+\tau_2J_0X\\[1mm]
(\tau_3J_0X)^\flat&0
\end{pmatrix},
\quad
A_{0,e}
=
\begin{pmatrix}
-(\tau_1\operatorname{Id}_{H_0}+\tau_2J_0)&0\\
0&0
\end{pmatrix}.
\]
Hence, by \eqref{eq:general-inner-transformation},
\(\lambda(u)=\lambda^s(u)+\frac12A_{0,u}\). Since \(J_0\in\mathfrak u(n)\),
the term \(-\frac{\tau_2}{2}J_0\) in the horizontal-horizontal block of
\(A_{0,e}\) vanishes in the quotient \(\mathfrak{gl}(V)/\mathfrak g\).
Comparing with the normal form \eqref{eq:inner-torsion-lift}, we obtain
that the six parameters of the inner torsion transform according to
\begin{equation}
\label{eq:inner-parameter-transformation}
\begin{aligned}
a&=a^s-\frac{\tau_1}{2},
&
b&=b^s,
&
\alpha&=\alpha^s+\frac{\tau_1}{2},\\
\beta&=\beta^s+\frac{\tau_2}{2},
&
\gamma&=\gamma^s,
&
\delta&=\delta^s+\frac{\tau_3}{2}.
\end{aligned}
\end{equation}

It will be convenient to write \(P_0(X)=\tau_1 X + \tau_2 J_0X\). Then
\begin{equation}
\label{eq:PQ-deformed}
P(X)
=\left(\alpha^s+\frac{\tau_1}{2}\right)X
+\left(\beta^s+\frac{\tau_2}{2}\right)J_0X
=P^s(X)+\frac12P_0(X),
\end{equation}
and
\begin{equation}
\label{eq:Q-deformed}
Q(X)
=\gamma^sX+\left(\delta^s+\frac{\tau_3}{2}\right)J_0X
=Q^s(X)+\frac{\tau_3}{2}J_0X.
\end{equation}

\subsubsection{General curvature deformation formulas}
\label{subsubsec:unitary-curvature-deformation}

Let \(c_0^s,c_1^s,c_2^s,c_3^s,c_4^s\) and
\(B^s:\Lambda^2H_0\to\mathfrak{gl}(H_0)\) denote the curvature
coefficients and horizontal curvature block of the torsion-free base.
Thus
\[
R_0^s(X,e)
=
\begin{pmatrix}
0&c_1^sX+c_2^sJ_0X\\[1mm]
(c_3^sX+c_4^sJ_0X)^\flat&0
\end{pmatrix},
\quad
R_0^s(X,Y)
=
\begin{pmatrix}
B^s(X,Y)&0\\
0&c_0^s\,\omega_0(X,Y)
\end{pmatrix}.
\]
Since the base connection is torsion-free,
\eqref{eq:general-curvature-deformation} gives
\begin{equation}
\label{eq:curvature-deformation-Un}
R_0(u,v)w
=R_0^s(u,v)w
+\frac12\bigl(D_{\lambda^s(u)}A_0\bigr)(v,w)
-\frac12\bigl(D_{\lambda^s(v)}A_0\bigr)(u,w)
+\frac14[A_{0,u},A_{0,v}]w.
\end{equation}
We now evaluate this formula on the arguments determined by the
decomposition \(V=H_0\oplus\mathbb Re\).

\begin{itemize}
\item[(a)]
For the component \(R_0(X,e)e\) we obtain
\[
R_0(X,e)e=R_0^s(X,e)e+\frac{b^s}{2}P_0(X)+\frac14P_0^2(X).
\]
Comparing the coefficients it follows that
\begin{equation}
\label{eq:c1c2-general-deformation}
c_1=c_1^s+\frac12b^s\tau_1+\frac14(\tau_1^2-\tau_2^2),
\quad
c_2=c_2^s+\frac12b^s\tau_2+\frac12\tau_1\tau_2.
\end{equation}

\item[(b)] For the component \(R_0(X,e)Y\) we obtain
\[
\begin{aligned}
R_0(X,e)Y-R_0^s(X,e)Y
={}&
\Bigl[
-\frac12h_0(Q^s(X),P_0(Y))
-\frac{\tau_3}{2}\omega_0(P^s(X),Y)\\
&\quad
+\frac{(2a^s-b^s)\tau_3}{2}\omega_0(X,Y)
-\frac{\tau_3}{4}\omega_0(X,P_0(Y))
\Bigr]e.
\end{aligned}
\]
Therefore, by comparing the coefficients,
\begin{equation}
\label{eq:c3c4-general-deformation}
\begin{aligned}
c_3
={}&
c_3^s-\frac12\gamma^s\tau_1-\frac12\delta^s\tau_2+\frac12\beta^s\tau_3-\frac14\tau_2\tau_3,\\
c_4
={}&
c_4^s-\frac12\delta^s\tau_1+\frac12\gamma^s\tau_2
+\left(a^s-\frac{\alpha^s+b^s}{2}\right)\tau_3-\frac14\tau_1\tau_3.
\end{aligned}
\end{equation}

\item[(c)] For the component \(R_0(X,Y)e\) we have
\[
R_0(X,Y)e-R_0^s(X,Y)e
=\Bigl[\delta^s\tau_1-\gamma^s\tau_2+\alpha^s\tau_3+\frac12\tau_1\tau_3\Bigr]\omega_0(X,Y)e.
\]
Therefore
\begin{equation}
\label{eq:c0-general-deformation}
c_0=c_0^s+\delta^s\tau_1-\gamma^s\tau_2+\alpha^s\tau_3+\frac12\tau_1\tau_3.
\end{equation}

\item[(d)] For the horizontal component \(R_0(X,Y)Z\) we obtain
\begin{equation}
\label{eq:B-general-deformation}
\begin{aligned}
B(X,Y)Z
={}&
B^s(X,Y)Z
+\frac{\tau_3}{2}\Bigl(\omega_0(Y,Z)P(X)-\omega_0(X,Z)P(Y)\Bigr)
+\delta^s\omega_0(X,Y)P_0(Z)\\
&-\frac12h_0(Q^s(X),Z)P_0(Y)+\frac12h_0(Q^s(Y),Z)P_0(X).
\end{aligned}
\end{equation}
\end{itemize}

By the symmetrization--deformation correspondence, under the deformation
\(\nabla=\nabla^s+\frac12t\), it remains only to specialize
relations~\eqref{eq:inner-parameter-transformation},
\eqref{eq:c1c2-general-deformation}, \eqref{eq:c3c4-general-deformation},
\eqref{eq:c0-general-deformation} and \eqref{eq:B-general-deformation}
to the three torsion-free families classified above.

\subsubsection{Deformation of the compatible torsion-free family}
\label{subsubsec:deformation-compatible-family}

For the compatible torsion-free family,
\(a^s=b^s=\alpha^s=\beta^s=\gamma^s=\delta^s=0\),
\(c_0^s=c_1^s=c_2^s=c_3^s=c_4^s=0\), and
\(B^s(X,Y)=\kappa^sK_{X,Y}\), where \(\kappa^s\in\mathbb R\). Therefore,
the previous discussion immediately gives the following family.

\begin{cor}
\label{cor:deformation-compatible-family}
A general invariant torsion deformation of the compatible torsion-free
family has inner torsion parameters
\[
a=-\frac{\tau_1}{2},\quad b=0, \quad
\alpha=\frac{\tau_1}{2}, \quad\beta=\frac{\tau_2}{2},\quad
\gamma=0,\quad \delta=\frac{\tau_3}{2}.
\]
Its curvature coefficients are
\[
c_1=\frac14(\tau_1^2-\tau_2^2),
\quad
c_2=\frac12\tau_1\tau_2,
\quad
c_3=-\frac14\tau_2\tau_3,
\quad
c_4=-\frac14\tau_1\tau_3,
\quad
c_0=\frac12\tau_1\tau_3.
\]
The horizontal curvature block is
\begin{equation}
\label{eq:B-deformed-compatible}
B(X,Y)Z
=\kappa^sK_{X,Y}Z
+\frac{\tau_3}{4}\Bigl(\omega_0(Y,Z)P_0(X)-\omega_0(X,Z)P_0(Y)\Bigr).
\end{equation}
\end{cor}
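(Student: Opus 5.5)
The corollary is a direct specialization of the general deformation formulas of Subsection~\ref{subsubsec:unitary-curvature-deformation}. I would substitute the compatible base data into them, one formula at a time. The base data are \(a^s=b^s=\alpha^s=\beta^s=\gamma^s=\delta^s=0\), \(c_0^s=\dots=c_4^s=0\) and \(B^s=\kappa^sK\), the latter from Theorem~\ref{thm:compatible-torsion-free}.

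First, the inner torsion parameters come straight from \eqref{eq:inner-parameter-transformation}. Setting all \(s\)-parameters to zero gives \(a=-\tau_1/2\), \(b=0\), \(\alpha=\tau_1/2\), \(\beta=\tau_2/2\), \(\gamma=0\) and \(\delta=\tau_3/2\). In particular, \eqref{eq:PQ-deformed} and \eqref{eq:Q-deformed} reduce to \(P=\tfrac12P_0\), \(P^s=0\), \(Q^s=0\) and \(Q=\tfrac{\tau_3}{2}J_0\).

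Next come the curvature coefficients. From \eqref{eq:c1c2-general-deformation} with \(b^s=0\) we get \(c_1=\tfrac14(\tau_1^2-\tau_2^2)\) and \(c_2=\tfrac12\tau_1\tau_2\). From \eqref{eq:c3c4-general-deformation} only the pure torsion terms survive: \(c_3=-\tfrac14\tau_2\tau_3\) and \(c_4=-\tfrac14\tau_1\tau_3\). From \eqref{eq:c0-general-deformation} we get \(c_0=\tfrac12\tau_1\tau_3\). As a consistency check, the first Bianchi identity for \((X,Y,e)\) requires \(c_0=2c_4\) in the torsion-free case, but here the torsion is nonzero, so I would not expect that relation to hold and would not impose it.

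Finally, the horizontal block comes from \eqref{eq:B-general-deformation}. The terms involving \(\delta^s\) and \(Q^s\) vanish. In the remaining term \(\tfrac{\tau_3}{2}\bigl(\omega_0(Y,Z)P(X)-\omega_0(X,Z)P(Y)\bigr)\) I would substitute \(P=\tfrac12P_0\), which gives exactly \eqref{eq:B-deformed-compatible}.

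The main obstacle is the reliability of the general formulas themselves. Since these are stated just above, the only real care point is to use the deformed \(P\) in \eqref{eq:B-general-deformation}, not \(P^s\). That choice is what produces the factor \(\tfrac{\tau_3}{4}\) rather than zero. If desired, one can double-check this directly from \eqref{eq:curvature-deformation-Un}. With \(\lambda^s=0\) the \(D\)-terms drop, and \(R_0(X,Y)Z=\kappa^sK_{X,Y}Z+\tfrac14[A_{0,X},A_{0,Y}]Z\). Computing the horizontal part of the commutator with the block form of \(A_{0,X}\) gives \(\tfrac{\tau_3}{4}\bigl(\omega_0(Y,Z)P_0(X)-\omega_0(X,Z)P_0(Y)\bigr)\), which confirms the formula.
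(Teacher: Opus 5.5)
Your proposal is correct and is essentially the paper's argument. The paper likewise obtains the corollary by substituting the compatible base data \(a^s=\dots=\delta^s=0\), \(c_i^s=0\), \(B^s=\kappa^sK\) into \eqref{eq:inner-parameter-transformation}--\eqref{eq:B-general-deformation}, and all of your resulting values agree with the statement. Your direct check of the horizontal block via \(\tfrac14[A_{0,X},A_{0,Y}]\) with \(\lambda^s=0\) is a valid extra verification that the paper leaves implicit.
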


\subsubsection{Deformation of the branch \(a^s=\alpha^s\)}
\label{subsubsec:deformation-branch-i}

Consider now the non-compatible torsion-free branch \(a^s=\alpha^s\).
The general deformation formulas give the following result.

\begin{cor}
\label{cor:deformation-branch-i}
A general invariant torsion deformation of the torsion-free branch
\(a^s=\alpha^s\) has inner torsion parameters
\[
a=a^s-\frac{\tau_1}{2}, \quad b=b^s,\quad
\alpha=a^s+\frac{\tau_1}{2},\quad
\beta=\beta^s+\frac{\tau_2}{2},\quad
\gamma=\gamma^s,\quad
\delta=\delta^s+\frac{\tau_3}{2}.
\]
Its curvature coefficients are
\[
\begin{aligned}
c_1&=a^s(b^s-a^s)+(\beta^s)^2+\frac12b^s\tau_1+\frac14(\tau_1^2-\tau_2^2),\\
c_2&=\beta^s(b^s-2a^s)+\frac12b^s\tau_2+\frac12\tau_1\tau_2,\\
c_3&=(a^s-b^s)\gamma^s+\beta^s\delta^s-\frac12\gamma^s\tau_1-\frac12\delta^s\tau_2
+\frac12\beta^s\tau_3-\frac14\tau_2\tau_3,\\
c_4&=(a^s-b^s)\delta^s-\beta^s\gamma^s-\frac12\delta^s\tau_1+\frac12\gamma^s\tau_2
+\frac12(a^s-b^s)\tau_3-\frac14\tau_1\tau_3,
\end{aligned}
\]
and
\[
c_0=2\bigl((a^s-b^s)\delta^s-\beta^s\gamma^s\bigr)
+\delta^s\tau_1-\gamma^s\tau_2+a^s\tau_3+\frac12\tau_1\tau_3.
\]
Moreover,
\begin{equation}
\label{eq:B-deformed-branch-i}
\begin{aligned}
B(X,Y)Z
={}&
P(X)h_0(Q(Y),Z)-P(Y)h_0(Q(X),Z)\\
&-2a\delta^s\,\omega_0(X,Y)Z
+\delta^s(\tau_2-2\beta^s)\omega_0(X,Y)J_0Z
+\kappa^sK_{X,Y}Z,
\end{aligned}
\end{equation}
where \(P\) and \(Q\) are given by \eqref{eq:PQ-deformed} and
\eqref{eq:Q-deformed}, respectively.
\end{cor}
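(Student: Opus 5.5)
The corollary is a direct specialization of the general deformation formulas, so I will substitute the branch-(i) data of Theorem~\ref{thm:torsion-free-non-compatible} into \eqref{eq:inner-parameter-transformation}, \eqref{eq:c1c2-general-deformation}, \eqref{eq:c3c4-general-deformation}, \eqref{eq:c0-general-deformation} and \eqref{eq:B-general-deformation}. The torsion-free base has \(\alpha^s=a^s\), coefficients \(c_j^s\) given by \eqref{eq:c-branch-i}, and horizontal block \(B^s\) given by \eqref{eq:B-branch-i}.

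The inner torsion parameters follow at once from \eqref{eq:inner-parameter-transformation} with \(\alpha^s=a^s\). For \(c_1,c_2,c_3\), I add the base values \(a^s(b^s-a^s)+(\beta^s)^2\), \(\beta^s(b^s-2a^s)\) and \((a^s-b^s)\gamma^s+\beta^s\delta^s\) to the increments in \eqref{eq:c1c2-general-deformation} and \eqref{eq:c3c4-general-deformation}. For \(c_4\), the \(\tau_3\)-coefficient \(a^s-\frac{\alpha^s+b^s}{2}\) becomes \(\frac12(a^s-b^s)\) once \(\alpha^s=a^s\). For \(c_0\), I use \(c_0^s=2c_4^s\) together with \eqref{eq:c0-general-deformation}, again replacing \(\alpha^s\) by \(a^s\). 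These steps are purely bookkeeping.

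The only real work is the horizontal block. I will write \(B^s=P^s\otimes (Q^s)^\flat-\dots-2a^s\delta^s\omega_0\operatorname{Id}-2\beta^s\delta^s\omega_0J_0+\kappa^sK\) and add the increment from \eqref{eq:B-general-deformation}. The goal is to recognize the sum
\[
P^s(X)h_0(Q^s(Y),Z)-\tfrac12 h_0(Q^s(X),Z)P_0(Y)+\tfrac12h_0(Q^s(Y),Z)P_0(X)+\tfrac{\tau_3}{2}\bigl(\omega_0(Y,Z)P(X)-\omega_0(X,Z)P(Y)\bigr)
\]
as the expansion of \(P(X)h_0(Q(Y),Z)-P(Y)h_0(Q(X),Z)\), where \(P=P^s+\frac12P_0\) and \(Q=Q^s+\frac{\tau_3}{2}J_0\). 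This works because \(h_0(J_0Y,Z)=\omega_0(Y,Z)\), and the \(\tau_3\)-terms supply exactly the \(\frac{\tau_3}{2}J_0\) part of \(Q\) paired with the full \(P\).

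What is left is \(\delta^s\omega_0(X,Y)P_0(Z)-2a^s\delta^s\omega_0(X,Y)Z-2\beta^s\delta^s\omega_0(X,Y)J_0Z\). Expanding \(P_0(Z)=\tau_1Z+\tau_2J_0Z\) gives \(\bigl(-2a^s+\tau_1\bigr)\delta^s\omega_0(X,Y)Z=-2a\delta^s\omega_0(X,Y)Z\), since \(a=a^s-\tau_1/2\), together with \(\delta^s(\tau_2-2\beta^s)\omega_0(X,Y)J_0Z\). This matches \eqref{eq:B-deformed-branch-i}.

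The main obstacle I expect is keeping the sign conventions consistent in this horizontal recombination. In particular I must check that the \(\omega_0\) terms in \eqref{eq:B-general-deformation} carry the full \(P\) and not \(P^s\), so that no stray \(\tau_3P_0\) cross term survives.
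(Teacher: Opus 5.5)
Your proposal is correct and follows the paper's approach: the paper obtains this corollary by substituting the branch-(i) base data \eqref{eq:c-branch-i} and \eqref{eq:B-branch-i}, with $\alpha^s=a^s$ and $c_0^s=2c_4^s$, into the general deformation formulas. Your regrouping of the horizontal block via $h_0(J_0Y,Z)=\omega_0(Y,Z)$, $P=P^s+\tfrac12P_0$, $Q=Q^s+\tfrac{\tau_3}{2}J_0$ and $(\tau_1-2a^s)\delta^s=-2a\delta^s$ is exactly what is needed; your displayed sum omits the term $-P^s(Y)h_0(Q^s(X),Z)$, but that is only a typo, and since \eqref{eq:B-general-deformation} already carries the full $P$, no stray $\tau_3P_0$ cross term arises.
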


\subsubsection{Deformation of the branch \(a^s\neq\alpha^s\)}
\label{subsubsec:deformation-branch-ii}

Finally, consider the complementary torsion-free branch
\(a^s\neq\alpha^s\). In this case \(\delta^s=0\) and \(\kappa^s=0\).

\begin{cor}
\label{cor:deformation-branch-ii}
A general invariant torsion deformation of the torsion-free branch
\(a^s\neq\alpha^s\) has inner torsion parameters
\[
a=a^s-\frac{\tau_1}{2},\quad
b=b^s,\quad
\alpha=\alpha^s+\frac{\tau_1}{2},\quad
\beta=\beta^s+\frac{\tau_2}{2},\quad
\gamma=\gamma^s,\quad
\delta=\frac{\tau_3}{2}.
\]
Its curvature coefficients are
\[
\begin{aligned}
c_1&=-(\alpha^s)^2+b^s\alpha^s+(\beta^s)^2+\frac12b^s\tau_1+\frac14(\tau_1^2-\tau_2^2),\\
c_2&=\beta^s(b^s-2\alpha^s)+\frac12b^s\tau_2+\frac12\tau_1\tau_2,\\
c_3&=(2a^s-\alpha^s-b^s)\gamma^s-\frac12\gamma^s\tau_1+\frac12\beta^s\tau_3-\frac14\tau_2\tau_3,\\
c_4&=-\beta^s\gamma^s+\frac12\gamma^s\tau_2
+\left(a^s-\frac{\alpha^s+b^s}{2}\right)\tau_3-\frac14\tau_1\tau_3,
\end{aligned}
\]
and
\[
c_0=-2\beta^s\gamma^s-\gamma^s\tau_2+\alpha^s\tau_3+\frac12\tau_1\tau_3.
\]
Moreover,
\begin{equation}
\label{eq:B-deformed-branch-ii}
B(X,Y)Z=P(X)h_0(Q(Y),Z)-P(Y)h_0(Q(X),Z),
\end{equation}
where \(P\) and \(Q\) are given by \eqref{eq:PQ-deformed} and
\eqref{eq:Q-deformed}. Thus no independent free K\"ahler curvature
component occurs in this family.
\end{cor}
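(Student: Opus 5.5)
The plan is to specialize the general deformation formulas of Subsection~\ref{subsubsec:unitary-curvature-deformation} to the torsion-free branch \(a^s\neq\alpha^s\) of Theorem~\ref{thm:torsion-free-non-compatible}(ii). On this branch \(\delta^s=0\) and \(\kappa^s=0\), with \(c_0^s,\dots,c_4^s\) given by \eqref{eq:c-branch-ii} and \(B^s\) by \eqref{eq:B-branch-ii}. The proof proceeds in three steps: the inner torsion parameters, the scalar curvature coefficients, and the horizontal block \(B\).

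\emph{Inner torsion.} The parameters follow at once from \eqref{eq:inner-parameter-transformation} after setting \(\delta^s=0\). This gives \(\delta=\tau_3/2\); the other five parameters appear exactly as listed in the statement.

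\emph{Scalar coefficients.} Substitute \eqref{eq:c-branch-ii} into \eqref{eq:c1c2-general-deformation}, \eqref{eq:c3c4-general-deformation} and \eqref{eq:c0-general-deformation}, and drop every term containing \(\delta^s\).
\begin{itemize}
\item For \(c_1,c_2\) this is immediate.
\item For \(c_3\), the terms \(\beta^s\delta^s\) and \(-\tfrac12\delta^s\tau_2\) vanish.
\item For \(c_4\), the starting value is \(c_4^s=-\beta^s\gamma^s\), and the term \(-\tfrac12\delta^s\tau_1\) vanishes.
\item For \(c_0\), the starting value is \(c_0^s=-2\beta^s\gamma^s\), and \(\delta^s\tau_1\) drops out.
\end{itemize}
This reproduces the stated expressions verbatim, so this step is pure bookkeeping.

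\emph{Horizontal block.} This is the only step with genuine content. Starting from \eqref{eq:B-general-deformation} with \(\delta^s=0\) and \(Q^s(X)=\gamma^sX\), I get
\[
B(X,Y)Z=\gamma^s\bigl(h_0(Y,Z)P^s(X)-h_0(X,Z)P^s(Y)\bigr)+\tfrac{\tau_3}{2}\bigl(\omega_0(Y,Z)P(X)-\omega_0(X,Z)P(Y)\bigr)+\tfrac{\gamma^s}{2}\bigl(h_0(Y,Z)P_0(X)-h_0(X,Z)P_0(Y)\bigr).
\]
The first and third groups combine, through \(P=P^s+\tfrac12P_0\) from \eqref{eq:PQ-deformed}, into \(\gamma^s\bigl(h_0(Y,Z)P(X)-h_0(X,Z)P(Y)\bigr)\).

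For the middle group, use \(\omega_0(Y,Z)=h_0(J_0Y,Z)\). This rewrites it as \(h_0\bigl(\tfrac{\tau_3}{2}J_0Y,Z\bigr)P(X)-h_0\bigl(\tfrac{\tau_3}{2}J_0X,Z\bigr)P(Y)\). Since \(Q(X)=\gamma^sX+\tfrac{\tau_3}{2}J_0X\) by \eqref{eq:Q-deformed}, the sum of the groups is exactly \(P(X)h_0(Q(Y),Z)-P(Y)h_0(Q(X),Z)\), which is \eqref{eq:B-deformed-branch-ii}.

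\emph{Main obstacle.} The one point I expect to need care is the sign convention in the middle group, \(\omega_0(Y,Z)=h_0(J_0Y,Z)\) versus \(h_0(Y,J_0Z)\). I will check it against the definition \(\omega_0(X,Y)=h_0(J_0X,Y)\). Because \(\kappa^s=0\) and \(\delta^s=0\), neither the \(\omega_0(X,Y)\mathrm{Id}\) term nor the \(\omega_0(X,Y)J_0\) term can arise. This accounts for the final remark that no free K\"ahler component occurs.
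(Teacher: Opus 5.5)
Your proposal is correct and follows the paper's route: the corollary is just the specialization of \eqref{eq:inner-parameter-transformation}, \eqref{eq:c1c2-general-deformation}, \eqref{eq:c3c4-general-deformation}, \eqref{eq:c0-general-deformation} and \eqref{eq:B-general-deformation} to branch (ii), using \(\delta^s=\kappa^s=0\) together with \eqref{eq:c-branch-ii} and \eqref{eq:B-branch-ii}. Your regrouping of the horizontal block into \(P(X)h_0(Q(Y),Z)-P(Y)h_0(Q(X),Z)\), via \(P=P^s+\tfrac12P_0\), \(\omega_0(Y,Z)=h_0(J_0Y,Z)\) and \(Q=Q^s+\tfrac{\tau_3}{2}J_0\), is exactly the verification the paper leaves implicit, and your sign convention is the correct one.
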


\subsubsection{Exhaustiveness of the deformation description}
\label{subsubsec:unitary-exhaustiveness}

The preceding families exhaust the general torsional case.

\begin{teo}[Classification through symmetrization]
\label{thm:unitary-general-classification}
Let \((M,\nabla,P)\) be an infinitesimally homogeneous affine manifold
with structure group \(\operatorname{U}(n)\times\{1\}\). Then its torsion
characteristic tensor is uniquely determined by
\(\tau_1,\tau_2,\tau_3\in\mathbb R\) through
\[
T_0(X,Y)=\tau_3\omega_0(X,Y)e,
\quad
T_0(X,e)=\tau_1X+\tau_2J_0X.
\]
Its symmetrized connection \(\nabla^s=\nabla-\frac12T\) is torsion-free
and \((M,\nabla^s,P)\) is infinitesimally homogeneous. Consequently,
\(\nabla^s\) belongs to exactly one of the following torsion-free
families:
\begin{enumerate}
\item the compatible family of Theorem~\ref{thm:compatible-torsion-free};
\item the non-compatible branch \(a^s=\alpha^s\), with
\(\mathfrak I_0^s\neq0\), of
Theorem~\ref{thm:torsion-free-non-compatible};
\item the complementary branch \(a^s\neq\alpha^s\) of
Theorem~\ref{thm:torsion-free-non-compatible}.
\end{enumerate}

Conversely, \(\nabla=\nabla^s+\frac12T\), and its inner torsion and
curvature characteristic tensors are determined by the transformation
formulas \eqref{eq:inner-parameter-transformation},
\eqref{eq:c1c2-general-deformation}, \eqref{eq:c3c4-general-deformation},
\eqref{eq:c0-general-deformation}, and \eqref{eq:B-general-deformation}.
Therefore Corollaries~\ref{cor:deformation-compatible-family},
\ref{cor:deformation-branch-i}, and \ref{cor:deformation-branch-ii} give
an exhaustive description of the characteristic tensors in the case of
arbitrary torsion.
\end{teo}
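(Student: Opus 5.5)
The proof assembles results already established. We proceed in three steps: the torsion, the symmetrization and its trichotomy, and then the reconstruction of $\nabla$.

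\emph{Torsion.} Since $T_0:\Lambda^2V\to V$ is a $G$-equivariant characteristic tensor, Proposition~\ref{prop:invariant-torsion-Un} applies directly. It gives the stated normal form with unique parameters $\tau_1,\tau_2,\tau_3$. Uniqueness holds because $\omega_0\neq0$ and because $\mathrm{Id}_{H_0}$ and $J_0$ are linearly independent.

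\emph{Symmetrization.} We apply Proposition~\ref{prop:general-invariant-deformation} with $A_0=-T_0$, which is $G$-invariant and skew-symmetric. It shows that $\nabla^s=\nabla-\tfrac12T$ makes $(M,\nabla^s,P)$ infinitesimally homogeneous, with $T_0^s=T_0-T_0=0$. The characteristic tensors $(R_0^s,\mathfrak I_0^s)$ are therefore $G$-equivariant and satisfy the hypotheses of Theorem~\ref{thm:algebraic-characterization}.

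The torsion-free analysis of Subsection~\ref{subsec:unitary-torsion-free} then sorts $\nabla^s$ into one of three cases:
\begin{enumerate}
\item if $\mathfrak I_0^s=0$, Theorem~\ref{thm:compatible-torsion-free} applies (the compatible family);
\item if $\mathfrak I_0^s\neq0$ and $a^s=\alpha^s$, we are in branch (i) of Theorem~\ref{thm:torsion-free-non-compatible};
\item if $a^s\neq\alpha^s$, we are in branch (ii), where necessarily $\mathfrak I_0^s\neq0$ and $\delta^s=0$.
\end{enumerate}
These cases are mutually exclusive. By Remark~\ref{rem:inner-torsion-parameters}, the parameters $a^s,\alpha^s,\dots$ are determined by $\mathfrak I_0^s$, which in turn is determined by $\nabla^s$; so the conditions defining the cases are well posed. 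Case (1) forces $a^s=\alpha^s=0$, which is why it is separated from (2) by the requirement $\mathfrak I_0^s\neq0$. This gives "exactly one".

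\emph{Reconstruction.} We have $\nabla=\nabla^s+\tfrac12 t$ with $t$ induced by $t_0=T_0$. Proposition~\ref{prop:general-invariant-deformation}, applied with base $\nabla^s$ (torsion-free) and $A_0=T_0$, expresses $\lambda$ and $R_0$ of $\nabla$ through \eqref{eq:general-inner-transformation} and \eqref{eq:general-curvature-deformation}. Evaluating these on the blocks of $V=H_0\oplus\mathbb Re$ gives \eqref{eq:inner-parameter-transformation}–\eqref{eq:B-general-deformation}.

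The inner-torsion parameters are read off after reducing modulo $\mathfrak g$. The term $-\tfrac{\tau_2}{2}J_0$ lies in $\mathfrak u(n)$ and so drops out, and the normal form is unique by Remark~\ref{rem:inner-torsion-parameters}. Because the curvature formula is independent of the chosen lift (as noted after \eqref{eq:DA-general}), the computed $c_i$ and $B$ are well defined.

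Substituting the three torsion-free families then yields Corollaries~\ref{cor:deformation-compatible-family}, \ref{cor:deformation-branch-i} and \ref{cor:deformation-branch-ii}. Every $\nabla$ arises this way, so the description is exhaustive.

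\emph{Main obstacle.} The only substantive point is checking that the block computations (a)–(d) correctly specialize \eqref{eq:general-curvature-deformation}. In particular, for (d) one must track $[A_{0,X},A_{0,Y}]$ and the $D_{\lambda^s}$ terms. We would verify these directly on $X,Y,Z\in H_0$ and $e$ using the explicit matrices of $A_{0,X}$, $A_{0,e}$ and $\lambda^s$. Everything else is bookkeeping.
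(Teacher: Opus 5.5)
Your proposal is correct and follows essentially the same route as the paper. You use Proposition~\ref{prop:invariant-torsion-Un} for the torsion normal form, Proposition~\ref{prop:general-invariant-deformation} with $A_0=-T_0$ to symmetrize and sort $\nabla^s$ into the three torsion-free families, and the same proposition with $A_0=T_0$ to recover $\nabla$ and its characteristic tensors; your extra remarks on uniqueness, mutual exclusivity, and lift-independence of $D_{\lambda^s(u)}A_0$ simply make explicit what the paper leaves implicit, and the block computations (a)--(d) you flag do check out.
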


\begin{proof}
By Proposition~\ref{prop:invariant-torsion-Un}, the torsion
characteristic tensor has the stated form for unique
\(\tau_1,\tau_2,\tau_3\in\mathbb R\). Applying
Proposition~\ref{prop:general-invariant-deformation} with \(A_0=-T_0\),
the symmetrized connection \(\nabla^s=\nabla-\frac12T\) is torsion-free
and infinitesimally homogeneous. Hence it belongs to the compatible
family if \(\mathfrak I_0^s=0\), or to one of the two non-compatible
branches of Theorem~\ref{thm:torsion-free-non-compatible} if
\(\mathfrak I_0^s\neq0\).

Conversely, starting from any of these torsion-free families and any
\(G\)-invariant tensor \(T_0\) of the stated form,
Proposition~\ref{prop:general-invariant-deformation} gives
\(\nabla=\nabla^s+\frac12T\). Its characteristic tensors are precisely
those described by the transformation formulas above.
\end{proof}

The classification can be stated directly in terms of the parameters of
the characteristic tensors of \(\nabla\).

\begin{cor}
\label{cor:parameter-classification}
Let \(\tau_1,\tau_2,\tau_3\) and \(a,b,\alpha,\beta,\gamma,\delta\) be
real numbers, defining \(G\)-equivariant tensors \(T_0\) and
\(\mathfrak I_0\) through \eqref{eq:torsion-normal-form-Un} and
\eqref{eq:inner-torsion-lift}. Then \((T_0,\mathfrak I_0)\) are the
torsion and inner torsion characteristic tensors of an infinitesimally
homogeneous affine manifold with structure group
\(\operatorname{U}(n)\times\{1\}\) if and only if
\begin{equation}
\label{eq:parameter-relation}
(2\delta-\tau_3)(\alpha-a-\tau_1)=0.
\end{equation}
In that case, the curvature characteristic tensor is determined by these
parameters together with a real number \(\kappa\), which is arbitrary if
\(\alpha-a=\tau_1\) and vanishes otherwise, through
Corollaries~\ref{cor:deformation-branch-i}
and~\ref{cor:deformation-branch-ii} with
\[
a^s=a+\frac{\tau_1}{2},\quad
b^s=b,\quad
\alpha^s=\alpha-\frac{\tau_1}{2},\quad
\beta^s=\beta-\frac{\tau_2}{2},\quad
\gamma^s=\gamma,\quad
\delta^s=\delta-\frac{\tau_3}{2},\quad
\kappa^s=\kappa.
\]
\end{cor}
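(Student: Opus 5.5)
The plan is to reduce everything to the torsion-free classification through Theorem~\ref{thm:unitary-general-classification}, and then translate the condition \(\delta^s(\alpha^s-a^s)=0\) of Theorem~\ref{thm:torsion-free-non-compatible} into the parameters of \(\nabla\).

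\emph{Necessity.} Suppose \((M,\nabla,P)\) is infinitesimally homogeneous with characteristic tensors \(T_0\) and \(\mathfrak I_0\) carrying the given parameters. By Theorem~\ref{thm:unitary-general-classification}, the symmetrization \(\nabla^s=\nabla-\frac12T\) is torsion-free and infinitesimally homogeneous. Its inner torsion parameters are obtained by inverting \eqref{eq:inner-parameter-transformation}:
\[
a^s=a+\tfrac{\tau_1}{2},\quad b^s=b,\quad \alpha^s=\alpha-\tfrac{\tau_1}{2},\quad \beta^s=\beta-\tfrac{\tau_2}{2},\quad \gamma^s=\gamma,\quad \delta^s=\delta-\tfrac{\tau_3}{2}.
\]
The torsion-free data satisfy the fundamental relation \eqref{eq:delta-alpha-a}, namely \(\delta^s(\alpha^s-a^s)=0\). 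In the compatible case this holds trivially because all parameters vanish. Substituting the inverted formulas gives
\[
\delta^s=\tfrac12(2\delta-\tau_3),\qquad \alpha^s-a^s=\alpha-a-\tau_1,
\]
so \eqref{eq:parameter-relation} follows.

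\emph{Sufficiency.} Assume \eqref{eq:parameter-relation} holds. Define \((a^s,\dots,\delta^s)\) by the displayed formulas, so that \(\delta^s(\alpha^s-a^s)=0\).
\begin{itemize}
\item If \(\alpha^s=a^s\), that is \(\alpha-a=\tau_1\), choose any \(\kappa^s=\kappa\). Theorem~\ref{thm:torsion-free-non-compatible}(i) then produces a torsion-free model. When all parameters vanish, the compatible family of Theorem~\ref{thm:compatible-torsion-free} is used instead, as in Remark~\ref{rem:compatible-inside-branch-i}.
\item If \(\alpha^s\neq a^s\), then necessarily \(\delta^s=0\). Branch (ii) of Theorem~\ref{thm:torsion-free-non-compatible} gives a model, and there \(\kappa=0\) is forced.
\end{itemize}
In either case, deform the model by \(t_0\) with parameters \(\tau_i\) using Proposition~\ref{prop:general-invariant-deformation}. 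Equation \eqref{eq:general-torsion-transformation} gives \(T_0'=t_0\). By \eqref{eq:inner-parameter-transformation}, the deformed inner torsion has exactly the prescribed parameters \(a,\dots,\delta\). This holds because the normal-form parameters are uniquely determined by \(\mathfrak I_0\) (Remark~\ref{rem:inner-torsion-parameters}), so matching the lifts modulo \(\mathfrak g\) is legitimate.

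The curvature statement then follows from Theorem~\ref{thm:unitary-general-classification}. The curvature of \(\nabla\) is obtained from that of \(\nabla^s\) by Corollaries~\ref{cor:deformation-branch-i} and \ref{cor:deformation-branch-ii}, whose free data are the \(s\)-parameters above together with \(\kappa^s\). The branch dichotomy \(\alpha^s=a^s\) versus \(\alpha^s\ne a^s\) is exactly \(\alpha-a=\tau_1\) versus \(\alpha-a\ne\tau_1\), which determines whether \(\kappa\) is free or zero.

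The main obstacle is bookkeeping rather than new mathematics. One must check that the compatible family, branch (i), and branch (ii) are correctly merged into the single condition. For this, note that the compatible family is the \(\mathfrak I_0^s=0\) case of branch (i), and that the constraint \(\mathfrak I_0^s\neq0\) in Theorem~\ref{thm:torsion-free-non-compatible}(i) does not exclude any parameter values.
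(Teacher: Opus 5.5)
Your proposal is correct and follows the paper's proof essentially step for step. You invert \eqref{eq:inner-parameter-transformation}, observe that \(\delta^s(\alpha^s-a^s)=\tfrac12(2\delta-\tau_3)(\alpha-a-\tau_1)\), and apply \eqref{eq:delta-alpha-a} to the symmetrized connection to get necessity (with \(\kappa^s=0\) off the branch \(\alpha^s=a^s\)); for sufficiency you build the torsion-free model from Theorems~\ref{thm:compatible-torsion-free} and~\ref{thm:torsion-free-non-compatible} (via Remark~\ref{rem:compatible-inside-branch-i}) and deform it by \(t_0=T_0\). Your remarks on the uniqueness of the normal-form parameters and on merging the compatible case into branch (i) simply make explicit the bookkeeping the paper leaves implicit.
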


\begin{proof}
Inverting \eqref{eq:inner-parameter-transformation}, the inner torsion
parameters of the symmetrized connection are given by the displayed
formulas, and
\[
\delta^s(\alpha^s-a^s)=\tfrac12(2\delta-\tau_3)(\alpha-a-\tau_1).
\]
If the data come from an infinitesimally homogeneous manifold, the
symmetrized connection satisfies \eqref{eq:delta-alpha-a}, which gives
\eqref{eq:parameter-relation}; moreover \(\kappa^s=0\) unless
\(\alpha^s=a^s\). Conversely, if \eqref{eq:parameter-relation} holds, the
parameters \((a^s,\dots,\delta^s)\) satisfy \(\delta^s(\alpha^s-a^s)=0\).
By Theorems~\ref{thm:compatible-torsion-free}
and~\ref{thm:torsion-free-non-compatible} (see also
Remark~\ref{rem:compatible-inside-branch-i}), there is a torsion-free
infinitesimally homogeneous manifold with these inner torsion
parameters and with Kähler parameter \(\kappa^s\) (with \(\kappa^s=0\) if
\(\alpha^s\neq a^s\)). Deforming it by \(t_0=T_0\) yields the required
manifold, and Theorem~\ref{thm:unitary-general-classification} gives its
curvature.
\end{proof}

\begin{cor}[Compatible connections]
\label{cor:compatible-connections}
Let \((M,\nabla,P)\) be infinitesimally homogeneous with structure group
\(\operatorname{U}(n)\times\{1\}\) and \(\mathfrak I_0=0\). Then
\[
\tau_1\tau_3=0,
\]
and there exists \(\kappa\in\mathbb R\), with
\[
\kappa\tau_1=0,
\]
such that
\[
R_0(X,e)=0,
\quad
R_0(X,Y)e=0,
\]
and
\[
R_0(X,Y)Z
=
\kappa K_{X,Y}Z
-\tau_2\tau_3\,\omega_0(X,Y)J_0Z.
\]
Conversely, every
\((\tau_1,\tau_2,\tau_3,\kappa)\in\mathbb R^4\)
satisfying
\[
\tau_1\tau_3=0,
\quad
\kappa\tau_1=0
\]
occurs.

In particular, if \(\tau_1\neq0\), then
\(\tau_3=\kappa=0\) and \(\nabla\) is flat.
\end{cor}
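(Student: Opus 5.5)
The plan is to specialize Corollary~\ref{cor:parameter-classification} to the case \(\mathfrak I_0=0\). By Remark~\ref{rem:inner-torsion-parameters}, this case means \(a=b=\alpha=\beta=\gamma=\delta=0\). Relation \eqref{eq:parameter-relation} then reads \((-\tau_3)(-\tau_1)=0\), so \(\tau_1\tau_3=0\).

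The symmetrized parameters given in that corollary are
\[
a^s=\tfrac{\tau_1}{2},\quad b^s=0,\quad \alpha^s=-\tfrac{\tau_1}{2},\quad \beta^s=-\tfrac{\tau_2}{2},\quad \gamma^s=0,\quad \delta^s=-\tfrac{\tau_3}{2}.
\]
Hence \(\alpha^s=a^s\) holds exactly when \(\tau_1=0\). Since \(\kappa\) is free only in that branch and vanishes otherwise, we get \(\kappa\tau_1=0\).

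Next I would split into the two branches and substitute into the explicit formulas.

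\emph{Case \(\tau_1=0\).} We use Corollary~\ref{cor:deformation-branch-i} with \(a^s=0\), \(b^s=0\), \(\beta^s=-\tau_2/2\), \(\gamma^s=0\), \(\delta^s=-\tau_3/2\). One checks term by term that \(c_1=\beta^{s2}-\tfrac14\tau_2^2=0\) and \(c_2=0\). For \(c_3\), the four terms \(\beta^s\delta^s\), \(-\tfrac12\delta^s\tau_2\), \(\tfrac12\beta^s\tau_3\), \(-\tfrac14\tau_2\tau_3\) equal \(\pm\tfrac14\tau_2\tau_3\) and cancel. Likewise \(c_4=0\) and \(c_0=0\). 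This gives \(R_0(X,e)=0\) and \(R_0(X,Y)e=0\). In \eqref{eq:B-deformed-branch-i}, \(P=Q=0\) and \(a=0\), so only two terms survive:
\[
\delta^s(\tau_2-2\beta^s)\,\omega_0(X,Y)J_0Z+\kappa K_{X,Y}Z=-\tau_2\tau_3\,\omega_0(X,Y)J_0Z+\kappa K_{X,Y}Z.
\]

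\emph{Case \(\tau_1\neq0\).} Then \(\tau_3=0\) and \(\kappa=0\), and we use Corollary~\ref{cor:deformation-branch-ii}. The same substitution gives \(c_1=-\tfrac14\tau_1^2+\tfrac14\tau_2^2+\tfrac14(\tau_1^2-\tau_2^2)=0\) and \(c_2=\tfrac12\tau_1\tau_2\cdot(-1)+\tfrac12\tau_1\tau_2=0\). Also \(c_3=c_4=c_0=0\), and \(B=0\) because \(P=Q=0\). So the curvature vanishes, which agrees with the stated formula since \(\kappa=\tau_3=0\). This proves the final flatness assertion.

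For the converse, suppose \(\tau_1\tau_3=0\) and \(\kappa\tau_1=0\). Then \eqref{eq:parameter-relation} holds with all inner torsion parameters zero. Corollary~\ref{cor:parameter-classification} produces an infinitesimally homogeneous manifold with \(\mathfrak I_0=0\), the given torsion, and Kähler parameter \(\kappa\); the condition \(\kappa\tau_1=0\) is exactly what that corollary permits.

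The only real obstacle is bookkeeping: the signs in \(c_3\), \(c_4\), \(c_0\) and in the \(\delta^s(\tau_2-2\beta^s)\) term. These sign checks are where errors are likely.
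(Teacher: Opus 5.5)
Your proposal is correct and follows essentially the same route as the paper: you specialize Corollary~\ref{cor:parameter-classification} to $a=b=\alpha=\beta=\gamma=\delta=0$, split into the cases $\tau_1=0$ (branch $a^s=\alpha^s$) and $\tau_1\neq0$ (branch $a^s\neq\alpha^s$), and read off $B$ from \eqref{eq:B-deformed-branch-i} and \eqref{eq:B-deformed-branch-ii} with $P=Q=0$. The only difference is that the paper gets $R_0(X,e)=0$ and $R_0(X,Y)e=0$ at once from compatibility ($R_0$ is $\mathfrak g$-valued when $\mathfrak I_0=0$), whereas you verify $c_0=\dots=c_4=0$ by direct substitution; that is equally valid, and your sign checks are correct.
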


\begin{proof}
Let \(\kappa:=\kappa^s\) denote the K\"ahler parameter of the
symmetrized connection. Since \(\mathfrak I_0=0\), all the inner torsion
parameters vanish:
\[
a=b=\alpha=\beta=\gamma=\delta=0.
\]
Hence Corollary~\ref{cor:parameter-classification} gives
\(\tau_1\tau_3=0\). Moreover, the K\"ahler parameter is unrestricted
only when \(\alpha-a=\tau_1\), which in the present case is equivalent
to \(\tau_1=0\). Thus \(\kappa\tau_1=0\).

Assume first that \(\tau_1=0\). The inner torsion parameters of the
symmetrized connection are then
\[
a^s=\alpha^s=b^s=\gamma^s=0,
\quad
\beta^s=-\frac{\tau_2}{2},
\quad
\delta^s=-\frac{\tau_3}{2}.
\]
Thus the symmetrized connection belongs to the branch
\(a^s=\alpha^s\). Since the deformed connection has
\(\mathfrak I_0=0\), one has \(P=Q=0\), and
\eqref{eq:B-deformed-branch-i} gives
\[
B(X,Y)Z
=
\kappa K_{X,Y}Z
-\tau_2\tau_3\,\omega_0(X,Y)J_0Z.
\]
Compatibility also implies \(R_0(u,v)\in\mathfrak g\), hence
\(R_0(X,e)=0\) and \(R_0(X,Y)e=0\).

If \(\tau_1\neq0\), then \(\tau_3=0\) and \(\kappa=0\). The symmetrized
connection belongs to the branch \(a^s\neq\alpha^s\), and
\eqref{eq:B-deformed-branch-ii}, together with \(P=Q=0\), gives
\(B=0\). Hence \(R_0=0\).

Conversely, Corollary~\ref{cor:parameter-classification}, together
with Corollaries~\ref{cor:deformation-branch-i}
and~\ref{cor:deformation-branch-ii}, shows that every set of parameters
satisfying the two displayed relations occurs.
\end{proof}

If \(\tau_3\neq0\), then necessarily \(\tau_1=0\), while \(\tau_2\) is
arbitrary; the compatible connections then have
\(T_0(X,Y)=\tau_3\omega_0(X,Y)e\), \(T_0(X,e)=\tau_2J_0X\) and
\(R_0(X,Y)Z=\kappa K_{X,Y}Z-\tau_2\tau_3\,\omega_0(X,Y)J_0Z\).
The case \(\tau_2=0\) contains the adapted connections considered above, while
\(\tau_2=-\tau_3\) is exactly the case of totally skew-symmetric torsion;
for instance, on a Sasakian space form the characteristic connection
of \cite{friedrich-ivanov} corresponds to \((\tau_2,\tau_3)=(2,-2)\).
For \(n=1\) one has \(\omega_0\otimes J_0=-K\), so the two curvature
terms combine into a single multiple of \(K\).

\subsubsection{The exterior derivative of \(\eta\)}
\label{subsubsec:deta}

The following invariant of the underlying \(G\)-structure can be read
off directly from the characteristic parameters.

\begin{prop}
\label{prop:deta-formula}
Let \((M,\nabla,P)\) be an infinitesimally homogeneous affine manifold
with structure group \(\operatorname{U}(n)\times\{1\}\), with torsion
parameter \(\tau_3\) and inner torsion parameter \(\delta\). Then
\begin{equation}
\label{eq:deta-formula}
d\eta(X,Y)=(\tau_3-2\delta)\,\omega(X,Y),
\quad X,Y\in H.
\end{equation}
In particular, \(H\) is a contact distribution, that is,
\(\eta\wedge(d\eta)^n\neq0\), if and only if \(\tau_3\neq2\delta\). In
that case \(\alpha=a+\tau_1\); equivalently, the symmetrized connection
belongs to the branch \(a^s=\alpha^s\), for which the K\"ahler
parameter \(\kappa^s\) is unrestricted by the classification.
\end{prop}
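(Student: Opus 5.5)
The plan is to compute \(d\eta\) on horizontal vectors with the connection \(\nabla\), using the torsion and the lift \(\lambda\) of the inner torsion. Let \(X,Y\) be horizontal vector fields. Substituting \([X,Y]=\nabla_XY-\nabla_YX-T(X,Y)\) into the normalization \(d\eta(X,Y)=X\eta(Y)-Y\eta(X)-\eta([X,Y])\) gives the standard identity
\[
d\eta(X,Y)=(\nabla_X\eta)(Y)-(\nabla_Y\eta)(X)+\eta\bigl(T(X,Y)\bigr).
\]
All three terms on the right are tensorial. It therefore suffices to evaluate them at a point \(x\) on \(X=s(x)u\), \(Y=s(x)w\), where \(s\) is a local section of \(P\) and \(u,w\in H_0\).

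First, Remark~\ref{rem:geometric-meaning-torsion-parameters} gives \(T(X,Y)=\tau_3\,\omega(X,Y)\xi\), so \(\eta(T(X,Y))=\tau_3\,\omega(X,Y)\).

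Second, extend \(w\) to the horizontal field \(s\,w\). Then \(\eta(s\,w)\equiv0\), and hence \((\nabla_X\eta)(s\,w)=-\eta(\nabla_X(s\,w))\). By \eqref{eq:lift-convention}, \(\nabla_{s(x)u}(s\,w)=s(x)\bigl(\Lambda^s_x(u)w\bigr)\). Its \(\eta\)-component is the \(e\)-component of \(\Lambda^s_x(u)w\). Infinitesimal homogeneity gives \(\Lambda^s_x(u)-\lambda(u)\in\mathfrak g\). Since \(\mathfrak g=\mathfrak u(n)\oplus\{0\}\) is block diagonal, elements of \(\mathfrak g\) send \(H_0\) into \(H_0\). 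So this \(e\)-component equals that of \(\lambda(u)w\), namely \(q(u)(w)=h_0(Q(u),w)=\gamma h_0(u,w)+\delta\,\omega_0(u,w)\) by the normal form \eqref{eq:inner-torsion-lift}. Translating through the unitary frame \(s(x)\), this yields
\[
(\nabla_X\eta)(Y)=-\gamma\,h(X,Y)-\delta\,\omega(X,Y).
\]
Antisymmetrizing in \(X,Y\), the symmetric \(\gamma\)-term cancels, and the \(\delta\)-term contributes \(-2\delta\,\omega(X,Y)\). Adding the torsion term gives \eqref{eq:deta-formula}.

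For the contact statement, \(\omega|_H=h(J\cdot,\cdot)\) is nondegenerate on \(H=\ker\eta\). Therefore \((d\eta|_H)^n\) is a nonzero multiple of \((\omega|_H)^n\neq0\) exactly when \(\tau_3-2\delta\neq0\). Also \(\eta\wedge(d\eta)^n\) only sees \(d\eta|_H\), since \(\eta\) already absorbs the \(\xi\)-direction. Hence \(\eta\wedge(d\eta)^n\neq0\) iff \(\tau_3\neq2\delta\).

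Finally, if \(\tau_3\neq2\delta\), the relation \eqref{eq:parameter-relation} of Corollary~\ref{cor:parameter-classification} forces \(\alpha-a-\tau_1=0\). Equivalently, \(\delta^s=\delta-\tfrac{\tau_3}{2}\neq0\), so \eqref{eq:delta-alpha-a} for the symmetrized connection gives \(a^s=\alpha^s\). This is branch (i) of Theorem~\ref{thm:torsion-free-non-compatible}, where \(\kappa^s\) is free.

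The only delicate point is the sign and convention bookkeeping in the second step. One must check that the \(e\)-row of \(\lambda(u)\) is indeed \(Q(u)^\flat\), not its transpose or negative. One must also check that the \(\mathfrak g\)-ambiguity in the lift does not affect that entry. Both follow from the block structure of \(\mathfrak g\).
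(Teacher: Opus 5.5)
Your proof is correct and follows essentially the same route as the paper. Both read off the \(\xi\)-component of \(\nabla_XY\) from the lower-left block \(Q(u)^\flat\) of the normal-form lift, noting that the \(\mathfrak g\)-ambiguity is harmless because \(\mathfrak g\) preserves \(H_0\). Both then antisymmetrize to obtain \(2\delta\,\omega\), add \(\eta(T(X,Y))=\tau_3\,\omega(X,Y)\), and invoke \eqref{eq:parameter-relation} for the final claim. The only cosmetic difference is packaging: you go through the tensorial identity \(d\eta(X,Y)=(\nabla_X\eta)(Y)-(\nabla_Y\eta)(X)+\eta(T(X,Y))\), whereas the paper uses \(d\eta(X,Y)=-\eta([X,Y])\) for horizontal fields directly.
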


\begin{proof}
Let \(s\) be a local section of \(P\), and let \(X=s\,x\), \(Y=s\,y\) be
horizontal vector fields, where \(x,y\) are \(H_0\)-valued functions. By
\eqref{eq:lift-convention},
\[
\nabla_XY=s\bigl(dy(X)+\Lambda^s(x)y\bigr),
\quad
\Lambda^s(x)\in\lambda(x)+\mathfrak g,
\]
with \(\lambda\) in the normal form \eqref{eq:inner-torsion-lift}. Since
elements of \(\mathfrak g\) preserve \(H_0\), and
\(\lambda(x)y=h_0(Q(x),y)\,e\), we get
\(\eta(\nabla_XY)=h_0(Q(x),y)\). Therefore
\[
\eta(\nabla_XY)-\eta(\nabla_YX)
=h_0(\gamma x+\delta J_0x,y)-h_0(\gamma y+\delta J_0y,x)
=2\delta\,\omega_0(x,y).
\]
Since \(\eta(T(X,Y))=\tau_3\omega_0(x,y)\), we obtain
\[
d\eta(X,Y)=-\eta([X,Y])
=-\eta\bigl(\nabla_XY-\nabla_YX-T(X,Y)\bigr)
=(\tau_3-2\delta)\,\omega(X,Y).
\]
Since \(\omega\) is nondegenerate on \(H\), \(d\eta|_{H\times H}\) is
nondegenerate if and only if \(\tau_3\neq2\delta\), which is equivalent
to \(\eta\wedge(d\eta)^n\neq0\). Finally, if \(\tau_3\neq2\delta\), then
\eqref{eq:parameter-relation} forces \(\alpha-a-\tau_1=0\), that is,
\(\alpha^s=a^s\).
\end{proof}

In particular, for compatible connections (\(\mathfrak I_0=0\)) one has
\(d\eta|_{H\times H}=\tau_3\,\omega\), so \(\tau_3\) controls the
contact-type horizontal component of \(d\eta\).

\begin{example}[Some distinguished torsion deformations]
\label{ex:distinguished-torsion-deformations}
Consider a compatible torsion-free model with \(B^s(X,Y)=\kappa^sK_{X,Y}\).
\begin{enumerate}
\item[(i)] Suppose that \(\tau_1=\tau_2=0\) and \(\tau_3\neq0\). Then
\(T_0(X,Y)=\tau_3\omega_0(X,Y)e\) and \(T_0(X,e)=0\). The inner torsion
parameters reduce to \(\delta=\frac{\tau_3}{2}\), with all the remaining
parameters equal to zero. Moreover, \(c_0=c_1=c_2=c_3=c_4=0\) and
\(B(X,Y)=\kappa^sK_{X,Y}\). Thus the deformation introduces nonzero
torsion without changing the curvature. In particular, if \(\kappa^s=0\),
the resulting connection is flat and has nonzero torsion.

\item[(ii)] Suppose that \(\tau_1=0\) and \(\tau_2\tau_3\neq0\). Then
\(T_0(X,Y)=\tau_3\omega_0(X,Y)e\) and \(T_0(X,e)=\tau_2J_0X\). The nonzero
curvature coefficients are
\[
c_1=-\frac{\tau_2^2}{4},
\quad
c_3=-\frac{\tau_2\tau_3}{4},
\]
while \(c_0=c_2=c_4=0\). Furthermore,
\[
B(X,Y)Z
=\kappa^sK_{X,Y}Z
+\frac{\tau_2\tau_3}{4}\Bigl(\omega_0(Y,Z)J_0X-\omega_0(X,Z)J_0Y\Bigr).
\]
Hence, in contrast with the previous case, the interaction between the
two torsion components produces an additional horizontal curvature term.
\end{enumerate}
\end{example}

\begin{remark}
\label{rem:non-contact-compatible-deformations}
Despite the formal resemblance between \(T_0(X,Y)=\tau_3\omega_0(X,Y)e\)
and the torsion occurring in contact models, the deformations of the
compatible torsion-free family considered above are not of contact type.
Indeed, by Corollary~\ref{cor:deformation-compatible-family},
\(\delta=\tau_3/2\), so Proposition~\ref{prop:deta-formula} gives
\(d\eta|_{H\times H}=0\) throughout this whole family, independently of
\(\tau_1,\tau_2,\tau_3\). This agrees with the fact that the deformation
does not change the underlying \(G\)-structure, and the original
connection \(\nabla^s\) is torsion-free with \(\nabla^s\eta=0\), so that
\(d\eta=0\).

In particular, the flat connection with nonzero torsion obtained in
Example~\ref{ex:distinguished-torsion-deformations}(i) is not locally
equivalent, as a \(\operatorname{U}(n)\times\{1\}\)-structure, to the
standard Heisenberg contact model.
\end{remark}

\begin{example}[A contact-type subfamily]
\label{ex:contact-type-subfamily}
Consider the torsion-free branch \(a^s=\alpha^s\) with
\[
a^s=\alpha^s=b^s=\beta^s=\gamma^s=0,
\quad
\delta^s=-\frac{\tau_3}{2},
\quad
B^s(X,Y)=\kappa^sK_{X,Y}.
\]
Deform this connection by
\(t_0(X,Y)=\tau_3\omega_0(X,Y)e\), \(t_0(X,e)=0\). Then
\(a=b=\alpha=\beta=\gamma=\delta=0\), so the deformed connection is
compatible with the \(\operatorname{U}(n)\times\{1\}\)-structure. Its
characteristic tensors are
\[
\mathfrak I_0=0,
\quad
T_0(X,Y)=\tau_3\omega_0(X,Y)e,
\quad
T_0(X,e)=0,
\quad
R_0(X,e)=0,
\quad
B(X,Y)=\kappa^sK_{X,Y}.
\]
By Proposition~\ref{prop:deta-formula},
\(d\eta(X,Y)=\tau_3\omega(X,Y)\) for \(X,Y\in H\). Consequently, if
\(\tau_3\neq0\), then \(\eta\wedge(d\eta)^n=\tau_3^n\,\eta\wedge\omega^n\neq0\),
and \(\eta\) is a contact form, and hence \(H=\ker\eta\) is a contact
distribution.
\end{example}

\begin{remark}[Classical contact models]
\label{rem:classical-contact-models}
Let \((\phi,\xi,\eta,g)\) be a contact metric structure in the sense of
\cite{Blair}, that is, \(\Phi=d\eta\) with the exterior derivative
normalized as in \cite{Blair}, which differs from ours by a factor
\(\frac12\). With the conventions of
Subsection~\ref{subsec:unitary-geometric-interpretation}, this reads
\(d\eta=2\Phi=-2\omega\). If such a structure is infinitesimally
homogeneous with respect to some connection, then
Proposition~\ref{prop:deta-formula} gives \(\tau_3-2\delta=-2\); in
particular, \(\tau_3=-2\) for compatible connections.

The contact sub-Riemannian space forms of \cite{falbel-gorodski}, namely
the Heisenberg group, the sphere \(\mathbb{S}^{2n+1}\) and the anti-de Sitter
model, normalized in this way and endowed with their adapted
connections, are infinitesimally homogeneous with \(\mathfrak I_0=0\),
vanishing sub-torsion \(\tau_1=\tau_2=0\), and horizontal curvature of
constant holomorphic sectional curvature. By
Corollary~\ref{cor:compatible-connections}, they belong to the
family of Example~\ref{ex:contact-type-subfamily} with \(\tau_3=-2\).
With the normalization \eqref{eq:kahler-normalization}, the parameter
\(\kappa^s\) is the holomorphic sectional curvature of the horizontal
curvature block; thus the Heisenberg model corresponds to
\(\kappa^s=0\), whereas the spherical and anti-de Sitter models
correspond to \(\kappa^s>0\) and \(\kappa^s<0\), respectively.
\end{remark}

\subsubsection{Levi-Civita connections and trans-Sasakian structures}
\label{subsubsec:levi-civita}

Recall \cite{oubina,Blair} that an almost contact metric structure
\((\phi,\xi,\eta,g)\) is \emph{trans-Sasakian of type
\((\alpha_T,\beta_T)\)} if, for the Levi-Civita connection \(\nabla\)
of \(g\),
\[
(\nabla_X\phi)Y
=\alpha_T\bigl(g(X,Y)\xi-\eta(Y)X\bigr)
+\beta_T\bigl(g(\phi X,Y)\xi-\eta(Y)\phi X\bigr);
\]
in that case
\(\nabla_X\xi=-\alpha_T\phi X+\beta_T\bigl(X-\eta(X)\xi\bigr)\). Type
\((0,0)\) is cosymplectic (coK\"ahler), type \((\alpha_T,0)\) with
\(\alpha_T\neq0\) is \(\alpha_T\)-Sasakian (Sasakian if \(\alpha_T=1\)),
and type \((0,\beta_T)\) with \(\beta_T\neq0\) is \(\beta_T\)-Kenmotsu
\cite{Kenmotsu}.

\begin{prop}
\label{prop:levi-civita}
Let \((M,\nabla,P)\) be an infinitesimally homogeneous affine manifold
with structure group \(\operatorname{U}(n)\times\{1\}\), \(T_0=0\), and
inner torsion parameters \((a,b,\alpha,\beta,\gamma,\delta)\).
Then \(\nabla\) is the Levi-Civita connection of
\(g=h+\eta\otimes\eta\) if and only if
\[
a=b=0,
\quad
\gamma=-\alpha,
\quad
\delta=-\beta.
\]
In this case,
\[
\nabla_X\xi=\alpha X+\beta JX,
\quad X\in H,
\]
and the associated almost contact metric structure is trans-Sasakian
of type
\(
(\alpha_T,\beta_T)=(-\beta,\alpha).
\)
Moreover,
\(
\alpha_T\beta_T=0.
\)
\end{prop}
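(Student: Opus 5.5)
The plan is to translate the Levi-Civita condition into a condition on the inner torsion, read off $\nabla\xi$ and $\nabla\phi$ from the normal form \eqref{eq:inner-torsion-lift}, and finally use the relation \eqref{eq:delta-alpha-a}.

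\emph{Step 1: the Levi-Civita condition.} Since $T_0=0$, $\nabla$ is Levi-Civita for $g$ exactly when $\nabla g=0$. Let $s$ be a local section of $P$. The metric $g$ has constant components $g_0=h_0\oplus 1$ in frames of $P$, so by \eqref{eq:lift-convention} we have $\nabla g=0$ if and only if $\Lambda^s_x(u)\in\mathfrak{so}(V,g_0)$ for all $u$. We know $\Lambda^s_x(u)\in\lambda(u)+\mathfrak g$ and $\mathfrak g=\mathfrak u(n)\oplus\{0\}\subset\mathfrak{so}(V,g_0)$. Hence the condition is independent of the $\mathfrak g$-ambiguity and is equivalent to $\lambda(u)\in\mathfrak{so}(V,g_0)$ for the normal-form lift.
\begin{itemize}
\item For $u=e$, the block $\operatorname{diag}(a\operatorname{Id}_{H_0},b)$ is skew only if $a=b=0$.
\item For $u=X\in H_0$, the off-diagonal matrix is skew if and only if $Q(X)^\flat=-P(X)^\flat$, that is, $\gamma=-\alpha$ and $\delta=-\beta$.
\end{itemize}
The converse direction of the equivalence is the same computation read backwards.

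\emph{Step 2: $\nabla\xi$ and $\nabla\phi$.} Since $\xi=s\,e$, \eqref{eq:lift-convention} gives $\nabla_X\xi=s(\Lambda^s(x)e)$. Elements of $\mathfrak g$ kill $e$, so this equals $s(P(x))=\alpha X+\beta JX$, and likewise $\nabla_\xi\xi=b\,\xi=0$.

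For $\phi$, note that it has constant representative $\Phi_0=\operatorname{diag}(J_0,0)$, which commutes with $\mathfrak g$. Therefore $(\nabla_{s u}\phi)=s\,[\lambda(u),\Phi_0]\,s^{-1}$, and in particular $\nabla_\xi\phi=0$ because $\lambda(e)=0$. For $X,Y\in H$ we then compute three things.
\begin{itemize}
\item First, $(\nabla_X\phi)Y=h(Q(X),JY)\xi=\bigl(\alpha\,\omega(X,Y)-\beta\,h(X,Y)\bigr)\xi$, using $\gamma=-\alpha$ and $\delta=-\beta$.
\item Second, $(\nabla_X\phi)\xi=-JP(X)=\beta X-\alpha JX$.
\item Third, all components involving $\xi$ in the first slot vanish.
\end{itemize}
Comparing with the trans-Sasakian formula evaluated on the decomposition $TM=H\oplus\mathbb R\xi$ yields $(\alpha_T,\beta_T)=(-\beta,\alpha)$ consistently in every component. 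I will also check that the stated formula for $\nabla_X\xi$ matches $-\alpha_T\phi X+\beta_TX$.

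\emph{Step 3: the product relation.} The torsion-free relation \eqref{eq:delta-alpha-a} with $a=0$ and $\delta=-\beta$ gives $-\beta\alpha=0$, hence $\alpha_T\beta_T=0$.

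The main care point is Step 1. The inner torsion is only a class modulo $\mathfrak g$, so one must argue that metric compatibility can be tested on the chosen normal-form lift. This works because $\mathfrak g\subset\mathfrak{so}(V,g_0)$, and because $\operatorname{Id}_{H_0}$ and the symmetric off-diagonal parts cannot be absorbed by $\mathfrak g$, as in Remark~\ref{rem:inner-torsion-parameters}. The remaining steps are the bookkeeping of the sign conventions $\omega(X,Y)=h(JX,Y)$ and $h(X,JY)=-\omega(X,Y)$.
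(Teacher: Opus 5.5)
Your proposal is correct and follows the paper's proof essentially step by step. You test metric compatibility on the normal-form lift, which is legitimate since $\mathfrak g\subset\mathfrak{so}(V,g_0)$; you read off $\nabla\xi$ and $\nabla\phi$ through $[\lambda(u),\phi_0]$, with $\phi_0$ commuting with $\mathfrak g$; and you specialize $\delta(\alpha-a)=0$ to obtain $\alpha\beta=0$. Your explicit checks of the $\xi$-components and of lift-independence just spell out what the paper calls a direct computation.
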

\begin{proof}
Since \(T_0=0\), the connection is Levi-Civita if and only if it is
metric. In an adapted orthonormal frame this is equivalent to the
connection matrices being skew-symmetric. Since
\(\mathfrak g\subset\mathfrak{so}(2n+1)\), this means that the normal form
\eqref{eq:inner-torsion-lift} is skew-symmetric modulo \(\mathfrak g\),
which gives
\(
a=b=0,
\)
\(\gamma=-\alpha,
\) and
\(\delta=-\beta.
\)
Hence, by \eqref{eq:lift-convention},
\[
\nabla_X\xi=\alpha X+\beta JX,
\quad X\in H,
\quad
\nabla_\xi\xi=0.
\]
Moreover, \(\phi\) has constant components
\(\phi_0=J_0\oplus0\), which commutes with \(\mathfrak g\), so
\(\nabla_{pu}\phi\) is represented by \([\lambda(u),\phi_0]\). A direct
computation gives \([\lambda(e),\phi_0]=0\) and, for \(X,Y\in H_0\),
\[
[\lambda(X),\phi_0]\,e=\beta X-\alpha J_0X,
\quad
[\lambda(X),\phi_0]\,Y=\bigl(-\beta\,h_0(X,Y)+\alpha\,\omega_0(X,Y)\bigr)e,
\]
which is the trans-Sasakian identity with
\(
\alpha_T=-\beta\) and \(\beta_T=\alpha.
\)
Finally, the torsion-free relation
\(
\delta(\alpha-a)=0
\)
reduces to
\(
-\beta\alpha=0,
\)
hence
\(
\alpha_T\beta_T=0.
\)
\end{proof}

\begin{example}[Classical Levi-Civita subfamilies]
\label{ex:levi-civita-subfamilies}
Proposition~\ref{prop:levi-civita} leaves exactly three possibilities.

\begin{enumerate}
\item[(C)] If \(\alpha=\beta=0\), then
\(
\mathfrak I_0=0,
\)
and the structure belongs to the compatible torsion-free family. In
particular, it is locally a product
\[
M\simeq N^{2n}(\kappa)\times I,
\]
where \(N^{2n}(\kappa)\) is a complex space form of constant
holomorphic sectional curvature \(\kappa\).

\item[(S)] If \(\alpha=0\) and \(\beta\neq0\), then
\(
\alpha_T=-\beta\neq0,
\) and \(\beta_T=0,
\)
so the structure is of \(\alpha_T\)-Sasakian type and belongs to
branch~(i).

\item[(K)] If \(\alpha\neq0\) and \(\beta=0\), then
\(
\alpha_T=0,
\) and \(
\beta_T=\alpha\neq0,
\)
so the structure is of \(\beta_T\)-Kenmotsu type and belongs to
branch~(ii).
\end{enumerate}
\end{example}

\begin{remark}
\label{rem:marrero}
The relation \(\alpha_T\beta_T=0\) is consistent with the classical
theory: Marrero~\cite{marrero} proved that every trans-Sasakian manifold
of dimension at least five is cosymplectic, \(\alpha_T\)-Sasakian or
\(\beta_T\)-Kenmotsu, while in dimension three one has
\(\xi(\alpha_T)=-2\alpha_T\beta_T\), so that constant coefficients again
force \(\alpha_T\beta_T=0\). In the present setting the relation follows
uniformly for all \(n\geq1\) from the algebraic condition
\eqref{eq:delta-alpha-a}. Note also that, since \(\tau_3=0\),
Proposition~\ref{prop:deta-formula} gives \(d\eta=-2\delta\,\omega=2\alpha_T\Phi\)
on \(H\), as expected for \(\alpha_T\)-Sasakian structures.
\end{remark}

\begin{remark}[Affine immersions]
\label{rem:affine-immersions}
By the existence theorem of Piccione and Tausk~\cite{piccione-tausk},
the models classified above can be used as ambient spaces for
\(G\)-structure preserving affine immersion problems. The corresponding
Gauss, Codazzi, Ricci and torsion equations are determined by their
characteristic tensors \((T_0,R_0,\mathfrak I_0)\).

For the compatible family of
Corollary~\ref{cor:compatible-connections} with
\(\tau_1=\tau_2=0\), these tensors, and hence the right-hand sides of the
immersion equations, depend only on the two parameters
\((\tau_3,\kappa)\). A detailed study of the resulting immersion
equations is left for future work.
\end{remark}

%%%%%%%%%%%%%%%%%%%%%%%%
%%%%%%%%%%%%%%%%%%%%%%%%%


\begin{thebibliography}{99}

\bibitem{Blair}
{\sc D. E. Blair},
{\em Riemannian Geometry of Contact and Symplectic Manifolds}, 2nd ed.,
Progress in Mathematics \textbf{203}, Birkh\"auser, Boston, 2010.

\bibitem{cahen-schwachhofer}
{\sc M. Cahen, L. J. Schwachh\"ofer},
{\em Special symplectic connections},
J. Differential Geom. \textbf{83} (2009), no.~2, 229--271. Also available as \texttt{arXiv:math/0402221}.

\bibitem{falbel-gorodski}
{\sc E. Falbel, C. Gorodski},
{\em On contact sub-Riemannian symmetric spaces},
Ann. Sci. \'Ecole Norm. Sup. (4) \textbf{28} (1995), no.~5, 571--589.

\bibitem{friedrich-ivanov}
{\sc Th. Friedrich, S. Ivanov},
{\em Parallel spinors and connections with skew-symmetric torsion in string theory},
Asian J. Math. \textbf{6} (2002), no.~2, 303--335.

\bibitem{GoodmanWallach}
{\sc R. Goodman, N. R. Wallach},
{\em Symmetry, Representations, and Invariants},
Graduate Texts in Mathematics \textbf{255}, Springer, New York, 2009.

\bibitem{Kenmotsu}
{\sc K. Kenmotsu},
{\em A class of almost contact Riemannian manifolds},
T\^ohoku Math. J. (2) \textbf{24} (1972), 93--103.

\bibitem{KobayashiNomizuII}
{\sc S. Kobayashi, K. Nomizu},
{\em Foundations of Differential Geometry}, Vol.~II,
John Wiley \& Sons, New York, 1969.

\bibitem{marin}
{\sc C. A. Mar\'in},
{\em An algebraic characterization of affine manifolds with
\(G\)-structure satisfying a homogeneity condition},
Rev. Colombiana Mat. \textbf{44} (2010), no.~2, 149--165.

\bibitem{Marin-Blazquez}
{\sc C. A. Mar\'in Arango, D. Bl\'azquez-Sanz,},
{\em Infinitesimally homogeneous manifolds with prescribed structure group},
Rev. Colombiana Mat. \textbf{50} (2016), no.~1, 1--15.

\bibitem{marrero}
{\sc J. C. Marrero},
{\em The local structure of trans-Sasakian manifolds},
Ann. Mat. Pura Appl. (4) \textbf{162} (1992), 77--86.

\bibitem{oubina}
{\sc J. A. Oubi\~na},
{\em New classes of almost contact metric structures},
Publ. Math. Debrecen \textbf{32} (1985), 187--193.

\bibitem{piccione-tausk}
{\sc P. Piccione, D. V. Tausk},
{\em An existence theorem for \(G\)-structure preserving affine immersions},
Indiana Univ. Math. J. \textbf{57} (2008), 1431--1465.

%\bibitem{tanno}
%{\sc S. Tanno}, {\em Variational problems on contact Riemannian manifolds}, Trans. Amer. Math. Soc. \textbf{314} (1989), no.~1, 349--379.

\bibitem{Weyl}
{\sc H. Weyl},
{\em The Classical Groups: Their Invariants and Representations},
Princeton University Press, Princeton, 1946.

\end{thebibliography}
\end{document}